\documentclass[11pt]{article} 

\usepackage{fourier} 
\usepackage{amsmath,amssymb,amsthm}
\usepackage{float}
\usepackage{fullpage}
\usepackage{subcaption} 
\usepackage{graphicx} 
\usepackage{authblk}

\usepackage{url} 
\usepackage{color}

\usepackage[boxruled]{algorithm2e} 
\usepackage[colorlinks=true,citecolor=red,linkcolor=blue]{hyperref} 

\newtheorem{theorem}{Theorem}
\newtheorem{lemma}{Lemma}

\newtheorem{proposition}{Proposition}

\theoremstyle{remark}
\newtheorem{remark}{Remark}

\theoremstyle{definition}
\newtheorem{definition}{Definition}
 

\renewcommand{\P}{\mathcal{P}} 
\newcommand{\C}{\mathcal{C}} 
\newcommand{\mbb}[1]{\mathbb{#1}} 
 
\DeclareMathOperator*{\argmin}{arg\,min}

\DeclareMathOperator{\sgn}{sign}
\begin{document} 
\title{Weighted Message Passing and Minimum Energy Flow for Heterogeneous Stochastic Block Models with Side Information}

\date{}
\author[1]{T. Tony Cai}
\author[2]{Tengyuan Liang}
\author[1]{Alexander Rakhlin}

\affil[1]{Department of Statistics, The Wharton School at the University of Pennsylvania}
\affil[2]{Econometrics and Statistics, The University of Chicago Booth School of Business} 

\renewcommand\Authands{ and }

\maketitle 
\begin{abstract}
	We study the misclassification error for community detection in general heterogeneous stochastic block models (SBM) with noisy or partial label information. We establish a connection between the misclassification rate and the notion of minimum energy on the local neighborhood of the SBM. We develop an optimally weighted message passing algorithm to reconstruct labels for SBM based on the minimum energy flow and the eigenvectors of a certain Markov transition matrix.
	The general SBM considered in this paper allows for unequal-size communities, degree heterogeneity, and different connection probabilities among blocks. We focus on how to optimally weigh the message passing to improve misclassification.
\end{abstract}

\section{Introduction}
%
The stochastic block model (SBM), or planted partition model, is a celebrated model that captures the clustering or community structure in large networks. Fundamental phase transition phenomena and limitations for efficient algorithms have been established for the  ``vanilla'' SBM, with equal-size communities \cite{coja2010graph,decelle2011asymptotic,massoulie2014community, mossel2012stochastic,mossel2013belief,krzakala2013spectral,abbe2014exact,hajek2014achieving,abbe2015community, deshpande2015asymptotic}. However, when applying the algorithms to real network datasets, one needs to carefully examine the validity of the vanilla SBM model. First, real networks are heterogeneous and imbalanced; they are often characterized by unequal community size, degree heterogeneity, and distinct connectivity strengths across communities. Second, in real networks, additional side information is often available. This additional information may come, for instance, in the form of a small portion of revealed community memberships, or in the form of node features, or both. In this paper, we aim to address the above concerns by answering the following questions:

\medskip
\noindent \textit{Algorithm}~~ For a general stochastic block model that allows for heterogeneity and contains noisy or partial side information, how to utilize this information to achieve better classification performance? 

\medskip
\noindent \textit{Theory}~~ What is the transition boundary on the signal-to-noise ratio for a general heterogeneous stochastic block model? Is there a physical explanation for the optimal misclassification error one can achieve?

\subsection{Problem Formulation}
\label{sec:problem.formulation}
We define the general SBM with parameter bundle $(n, k, N \in \mathbb{R}^k, Q \in \mathbb{R}^{k \times k})$ as follows. Let $n$ denote the number of nodes and $k$ the number of communities. The vector $N=[n_1, n_2,\ldots, n_k]^T$ denotes the number of nodes in each community. The symmetric matrix $Q = [Q_{ij}]$ represents the connection probability: $Q_{ij}$ is the probability of a connection between a node in community $i$ to a node in community $j$. Specifically, one observes a graph $G(V,E)$ with $|V| = n$, generated from SBM as follows. There is a latent disjoint partition that divides $V=\bigcup_{l=1}^k V_l$ into $k$ communities. Define $\ell(\cdot): V \rightarrow [k]$ to be the label (or, community) of a node $v$. For any two nodes $v, u \in V$, there is an edge between $(u\leftrightarrow v) \in E$ with probability $Q_{\ell(u), \ell(v)}$. The goal is to recover the latent label $\ell(v)$ for each node $v$.
Here we consider the following kinds of heterogeneity: unequal size communities (represented by $[n_i]$), different connection probabilities across communities (as given by $[Q_{ij}]$), and degree heterogeneity (due to both $[n_i]$ and $[Q_{ij}]$).

We study the problem when either noisy or partial label information is available in addition to the graph structure and show how to ``optimally'' improve the classification result (in terms of misclassification error). We argue that this is common for many practical problems. First, in real network datasets, a small portion of labels (or, community memberships) is often available. Second, a practitioner often has certain initial guess of the membership, either through training regression models using node features and partially revealed labels as side information, or running certain clustering algorithms (for example, spectral clustering using non-backtracking matrix, semi-definite programs or modularity method) on a subset or the whole network. We will show that as long as these initial guesses are better than random assignments, one can ``optimally weigh'' the initial guess according to the network structure to achieve small misclassification error.  

Formally, the noisy (or partial) information is defined as a labeling $\tilde{\ell}_{\rm prior}$ on the nodes of the graph with the following stochastic description. The parameter $\delta$ quantifies either (a) the portion of randomly revealed true labels (with the rest of entries in $\tilde{\ell}_{\rm prior}$ missing), or (b) the accuracy of noisy labeling $\tilde{\ell}_{\rm prior}$, meaning 
$$\mathbb{P} (\tilde{\ell}_{\rm prior}(v) = \ell(v)) =  \frac{1-\delta}{k} + \delta,$$ 
and when $\tilde{\ell}_{\rm prior}(v) \neq \ell(v)$, each label occurs with equal probability.


\subsection{Prior Work}
In the literature on vanilla SBM (equal size communities, symmetric case), there are two major criteria --- weak and strong consistency. Weak consistency asks for recovery better than random guessing in a sparse random graph regime ($p, q \asymp 1/n$), and strong consistency requires exact recovery for each node above the connectedness theshold ($p, q \asymp \log n/n $). Interesting phase transition phenomena in weak consistency for SBM have been discovered in \cite{decelle2011asymptotic} via the insightful cavity method from statistical physics.  
Sharp phase transitions for weak consistency have  been thoroughly investigated in \cite{coja2010graph,mossel2012stochastic,mossel2013belief,mossel2013proof,massoulie2014community}. In particular for $k=2$, spectral algorithms on the non-backtracking matrix have been studied in \cite{massoulie2014community} and the non-backtracking walk in \cite{mossel2013proof}. In these two fundamental papers, the authors resolved the conjecture on the transition boundary for weak consistency posed in \cite{decelle2011asymptotic}. 
Spectral algorithms as initialization and belief propagation as further refinement to achieve better recovery was established in \cite{mossel2013belief}. 
Recent work of \cite{abbe2015detection} establishes the positive detectability result down to the Kesten-Stigum bound for all $k$ via a detailed analysis of a modified version of belief propagation.
For strong consistency, \cite{abbe2014exact,hajek2014achieving,hajek2015achieving} established the phase transition using information-theoretic tools and semi-definite programming (SDP) techniques. In the statistics literature, \cite{zhang2015minimax,gao2015achieving} investigated the misclassification rate of the standard SBM.

For the general SBM with connectivity matrix $Q$, \cite{guedon2016community, cai2015robust, chen2015convexified} provided sharp non-asymptotic upper bound analysis on the performance of a certain semi-definite program. They investigated the conditions on $Q$ for a targeted recovery accuracy, quantified as the loss (as a matrix norm) between the SDP solution and the ground truth. The results are more practical for heterogeneous real networks. However, for the analysis of SDP to work, these results all assume certain density gap conditions, i.e., $\max_{1\leq i <  j \leq k} Q_{ij} < \min_{1
\leq i \leq r} Q_{ii}$, which could be restrictive in real settings. Our technical approach is different, and does not require the density gap conditions. Moreover, we can quantify more detailed recovery guarantees, for example, when one can distinguish communities $i, j$ from $l$, but not able to tell $i, j$ apart. In addition, our approach can be implemented in a decentralized fashion, while SDP approaches typically do not scale well for large networks.


For SBM with side information, \cite{kanade2014global, cai2016inference, saade2016fast} considered SBM in the semi-supervised setting, where the side information comes as partial labels. \cite{kanade2014global} considered the setting when the labels for a vanishing fraction of the nodes are revealed, and showed that pushing below the Kesten-Stigum bound \cite{kesten1966limit,kesten1966additional} is possible in this setting, drawing a connection to a similar phenomenon in $k$-label broadcasting processes \cite{mossel2001reconstruction}. In addition, \cite{cai2016inference, saade2016fast} studied linearized belief propagation and misclassification error on the partially labeled SBM.

The focus of this paper is on local algorithms, which are naturally suited for distributed computing \cite{linial1992locality} and provide efficient solutions to certain computationally hard combinatorial optimization problems on graphs. For some of these problems, they are good approximations to global algorithms \cite{kleinberg2000small, gamarnik2014limits, parnas2007approximating, nguyen2008constant}. The fundamental limits of local algorithms have been investigated, in particular, in \cite{montanari2015finding} in the context of a sparse planted clique model. We also want to point out that \cite{mossel2016local} studied the local belief propagation and characterized the expected fraction of correctly labeled vertices using fixed point analysis of the density evolution, in the case of vanilla SBM with side information. 

Finally, we briefly review broadcasting processes on trees. Consider a Markov chain on an infinite tree rooted at $\rho$ with branching number $b$. Given the label of the root $\ell(\rho)$, each vertex chooses its label by applying the Markov rule $M$ to its parent's label, recursively and independently. The process is called broadcasting process on trees. One is interested in reconstructing the root label $\ell(\rho)$ given all the $n$-th level leaf labels.
Sharp reconstruction thresholds for the broadcasting process on general trees for the symmetric Ising model setting (each node's label is $\{+,-\}$) have been studied in \cite{evans2000broadcasting}. \cite{mossel2003information} studied a general Markov channel on trees that subsumes $k$-state Potts model and symmetric Ising model as special cases, and established non-census-solvability below the Kesten-Stigum bound. \cite{janson2004robust} extended the sharp threshold to robust reconstruction, where the vertex' labels are contaminated with noise.
The transition thresholds proved in the above literature correspond to the Kesten-Stigum bound $b |\lambda_2(M)|^2  = 1$ \cite{kesten1966limit,kesten1966additional}. 

\subsection{Our Contributions}

The main results of the present paper are summarized as follows.

\medskip
\noindent \textit{Weighted Message Passing}~~ We propose a new local algorithm -- Weighted Message Passing (WMP) -- that can be viewed as linearized belief propagation with a novel weighted initialization. The \textit{optimal weights} are jointly determined by the \textit{minimum energy flow} that captures the imbalance of local tree-like neighborhood of SBM, and by the \textit{second eigenvectors} of the Markov transition matrix for the label broadcasting process. 
As we will show, these initializations are crucial for the analysis of general SBM that is heterogeneous and asymmetric. 

For the technical contribution, we provide non-asymptotic analysis on the evolution of WMP messages. For general number of communities, it is challenging to track the densities of WMP messages during evolution. We overcome the difficulty through introducing carefully chosen weights and then prove concentration-of-measure phenomenon on messages.

\medskip
\noindent \textit{Misclassification Error}~~ We establish a close connection between the \textit{misclassification error} and a notion called \textit{minimum energy} through the optimally weighted message passing algorithm. In fact, we show that asymptotically almost surely, the misclassification error of WMP ${\sf Err}(\hat{\ell}_{\rm wmp})$ satisfies 
\begin{align*}
	{\sf Err}(\hat{\ell}_{\rm wmp}) \leq \exp\left( -\frac{1}{2 \mathbf{E}^* (\theta^{-2})} \right),
\end{align*} 
where $\mathbf{E}^* (\theta^{-2})$ is defined as the \textit{minimum energy} based on the local tree-like neighborhood, with $\theta^2$ chosen as the conductance level on the edges of the tree. Intuitively, the smaller the energy is, the better the misclassification error one can achieve. This result provides a physical interpretation for the misclassification error. In return, the above upper bound provides a principled way of choosing the optimal weights as to minimize the energy determined by the Thomson's principal \cite{lyons2005probability}. This approach is key to dealing with asymmetric and imbalanced local neighborhoods. 

\medskip
\noindent \textit{Transition Boundary}~~ We show that the Kesten-Stigum bound is the sharp boundary for local algorithms on the signal-to-noise ratio for the general heterogeneous SBM. Define the following quantities 	
\begin{align}
		& K := \left[ {\rm diag}(QN) \right]^{-1}  Q {\rm diag}(N),~ M := Q {\rm diag}(N) \label{eqn:K} \\
		& \theta := \lambda_{2} (K), ~\lambda := \lambda_{1}(M),\nonumber\\
		& \text{and} \quad {\rm SNR} := \lambda \theta^2,
\end{align}
where $N, Q$ are defined in Section~\ref{sec:problem.formulation}, and $\lambda_i(\cdot)$ denotes the $i$-th eigenvalue.
Then the Kesten-Stigum bound ${\rm SNR} = 1$ is the threshold for local algorithms. Above it, the minimum energy $\mathbb{E}^*(\theta^{-2})$ is finite, which asserts a valid upper bound on the misclassification error. Below it, the minimum energy diverges and WMP fails. In fact, we show that below the threshold, no local algorithm can perform significantly better than random guessing.

\medskip
\noindent \textit{Set Identification}~~ When the number of communities $k\geq 3$, we define a notion of \textit{set identification} to describe, for two disjoint sets (of communities) $S, T \subset [k]$, whether one can distinguish $S$ from $T$. This notion subsumes as a special case the classic identification when $S, T$ are singletons. However, it describes more general cases when one cannot distinguish the communities inside $S$ and $T$, but is able to distinguish $S$ and $T$. We provide a mathematical description of this fact using the structure of eigenvectors for the Markov transition matrix $K$ defined in \eqref{eqn:K}. Further, we show that one can weigh the labels in the ``most informative direction'' by initializing WMP according to the second eigenvectors.

\subsection{Organization of the Paper}

The paper is organized as follows. Section~\ref{sec:prelim} reviews the background, definitions, and theoretical tools that will be employed to solve the general SBM. To illustrate the main idea behind the theoretical analysis better, we split the main result into two sections. Section~\ref{sec:k=2} resolves the $k=2$ case, where we emphasize the derivation of  WMP as a linearized belief propagation, and, more importantly, detail the initialization of WMP according to minimum energy flow. Then we establish the connection between misclassification and energy. 
In Section~\ref{sec:g-k}, we focus on the general $k \geq 3$ case, where we incorporate an additional layer of weights on the labels introduced by the eigenvectors of the Markov transition matrix. We then describe the mathematical treatment of set identification. Discussions on the gap between local and global algorithms for growing $k$, and on how WMP utilizes the asymmetry follow in the end. Section \ref{sec:num.study} considers the numerical performance of the proposed algorithm. The proofs of the main results are given in Section \ref{sec:proof}.
%

\section{Preliminaries}
\label{sec:prelim}

\subsection{Tree, Branching Number, Flow and Energy}
Let $T_{t}(o)$ denote the tree up to depth $t$ with root $o$. For a node $v$, the set of children is denoted by $\C(v)$, children at depth $d$ denoted by $\C^d(v)$, and the parent of $v$ is denoted by $\P(v)$. We use $|v|$ to denote the depth of $v$ relative to $o$. If we view a tree as an electrical network, one can define the current \textit{flow} and \textit{energy} on the tree \cite{lyons2005probability}. Later in the paper we will show the close connection between these notions and the misclassification error.
\begin{definition}[Electric Flow]
	\label{def:unit_flow}
	A unit flow $\mathbf{i}(\cdot): V\rightarrow \mathbb{R}$ on a tree $T$ is called a valid \textit{unit flow} if $\mathbf{i}(\rightsquigarrow o) = 1$ and for any $v$
	\begin{align*}
		\mathbf{i}(\rightsquigarrow v) = \sum_{u \in \C(v)} \mathbf{i}(\rightsquigarrow u).
	\end{align*}
\end{definition}
\begin{definition}[Energy and Resistance]
	The \textit{energy} $\mathbf{E}(\mathbf{r}, \mathbf{i})$ of a unit flow $\mathbf{i}$ at \textit{resistance level} $\mathbf{r}>0$ is defined as
	\begin{align*}
		\mathbf{E}(\mathbf{r}, \mathbf{i}) := \sum_{v \in T} \mathbf{i}(\rightsquigarrow v)^2 \mathbf{r}^{|v|}.
	\end{align*}
	The \textit{minimum energy} $\mathbf{E}^*(\mathbf{r})$ is
	\begin{align*}
		\mathbf{E}^*(\mathbf{r}) := \inf_{\mathbf{i}} \mathbf{E}(\mathbf{r}, \mathbf{i}),
	\end{align*}
	where the infimum is over all valid unit flows. Denote the minimum energy flow as $\mathbf{i}^*$.
\end{definition}
When assigning resistance $\mathbf{r}^d$ to edges that are $d$-depth away from the root, the energy enjoys the natural physical interpretation. We also remark that for a given resistance level, one can calculate the minimum energy flow $\mathbf{i}^*$ on the tree using Thomson's principal. We identify the reciprocal of resistance level with the conductance level.

Now we are ready to define the \textit{branching number} of a tree $T$ through \textit{minimum energy}.
\begin{definition}[Branching Number]
	\label{def:branch.num}
	The \textit{branching number} ${\rm br}(T)$ can be defined as
	\begin{align*}
		{\rm br}(T) := \sup \{ \mathbf{r}:~ \mathbf{E}(\mathbf{r}) < \infty \} = \sup \{ \mathbf{r}:~ \inf_{\mathbf{i}} \sum_{v \in T} \mathbf{i}(\rightsquigarrow v)^2 \mathbf{r}^{|v|} < \infty \}.
	\end{align*}
\end{definition}
It is well known that the branching number not only captures the growth rate of the tree, but also the more detailed structure, such as imbalance \cite{lyons2005probability}.

\subsection{Broadcasting Trees and SBM}
When viewed locally, stochastic block models in the sparse regime share similarities with a label broadcasting process on a Galton-Watson tree. In fact, the local neighborhood of SBM can be coupled with a broadcasting tree with high probability as $n \rightarrow \infty$. This phenomenon has been investigated in studying the detectability and reconstruction threshold for vanilla SBM (equal-size communities, symmetric case), as in \cite{mossel2012stochastic}.

Let us formally define the \textit{label broadcasting process} conditioned on a tree $T(o)$.
\begin{definition}[Label Broadcasting]
	\label{def:label-broadcasting}
	Given a tree $T(o)$, the $k$-broadcasting process on $T$ with the Markov transition matrix $K \in \mathbb{R}^{k \times k}$ describes the following process of label evolution. Conditioning on a node $v$ and its label $\ell(v) \in [k]$, the labels of children $u \in \C(v)$ are sampled independently from
	\begin{align*}
		\mathbb{P} (\ell(u) | \ell_{T_{|v|}(o)}) = \mathbb{P}(\ell(u) | \ell(v)) = K_{\ell(v),\ell(u)},
	\end{align*}
	where the first equality is the Markov property.
\end{definition}

Let us review the definition of the multi-type Galton-Watson tree. We shall only consider the Poisson branching process.
\begin{definition}[Multi-type Galton-Watson Tree]
	Consider a $k$-types Galton-Watson process with the mean matrix $M \in \mathbb{R}^{k \times k}$. For a node $v$, given its type $\ell(v) = i$, the number of type $j$ children of $v$ enjoys a ${\sf Poisson}(m_{ij})$ distribution, independently of other types. Start the process recursively for $t$ generations from root $o$. The tree $T_t(o)$ is called a multi-type Galton-Watson tree.
\end{definition}



\subsection{Notation}

The moment generating function (MGF) for a random variable $X$ is denoted by  $\Psi_X(\lambda) = \mbb{E} e^{\lambda X}$. 
For asymptotic order of magnitude, we use $a(n) = \mathcal{O}(b(n))$ to denote that $\forall n, a(n) \leq C b(n)$ for some universal constant $C$, and use $\mathcal{O}^*(\cdot)$ to omit the poly-logarithmic dependence. As for notation $\precsim, \succsim$: $a(n) \precsim b(n)$ if and only if $\varlimsup \limits_{n\rightarrow \infty} \frac{a(n)}{b(n)} \leq C$, with some constant $C>0$, and vice versa. The square bracket $[\cdot]$ is used to represent the index set $[k]:=[1,2,\ldots,k]$; in particular when $k=2$, $[2]:=\{+,-\}$ for convenience.

Recall that the hyperbolic tangent is $\tanh x = \frac{e^{x} - e^{-x}}{e^{x} + e^{-x}}$. The message-passing algorithm in the following sections involves a non-linear update rule defined through a function
\begin{align}
	\label{eq:f_theta}
	f_{\theta_1, \theta_2}(x)& := \log \frac{1+\theta_1 \tanh \frac{x}{2}}{1- \theta_2 \tanh \frac{x}{2}},
\end{align}
for $0<\theta_1, \theta_2 <1$. Note that the derivative $f'_{\theta_1,\theta_2}(0) = \frac{\theta_1 + \theta_2}{2}$.

\section{Two Communities}
\label{sec:k=2}

In this Section we will illustrate the main results for the case of two, possibly imbalanced, communities. We motivate the weighted message passing algorithm, and its relation to minimum energy flow. We investigate the connection between misclassification and minimum energy, as well as the corresponding transition threshold for general SBM.

\subsection{Main Algorithmic and Theoretical Results}
This section serves as an informal summary of the results for $k=2$. As a start, we introduce the following weighted message passing (WMP) Algorithm~\ref{alg:wmp}.

\begin{algorithm}[H]
	\KwData{Graph $G(V,E)$ with noisy label information $\tilde{\ell}_{\rm prior}$. Parameters: neighborhood radius $\bar{t}$ and conductance level $\bar{\theta}^2$. }
	\KwResult{The labeling for each node $o \in V$.}
	\For{each node $o \in V$,}{
	Open the tree neighborhood $T_{\bar{t}}(o)$ induced by the graph $G(V,E)$ \;
	Layer $\bar{t}$: for every node $u \in \C^{\bar{t}}(o)$ with distance $\bar{t}$ to the root on $T_{\bar{t}}(o)$, initialize its message $$M(u, 0) =  \bar{\theta}^{-2|u|} \cdot \mathbf{i}^*(\rightsquigarrow u) \cdot \sgn[ \tilde{\ell}_{\rm prior}(u) ],$$ where $\mathbf{i}^*(\rightsquigarrow u)$ is the minimum energy flow to $u$ calculated via Thomson's principal on $T_{\bar{t}}(o)$ with conductance level $\bar{\theta}^2$  \;
	\For{$t = 1,\ldots \bar{t}$,}{
	Layer $\bar{t} - t$: for every node $u \in \C^{\bar{t} - t}(o)$, calculate the message $M(u, t)$ through the linearized update rule
	$$M(u, t) = \sum_{v \in \C(u)} \bar{\theta} M(v, t-1).$$}
	Output $\hat{\ell}_{\rm wmp}(u) = \sgn[ M(o, \bar{t}) ]$.}
	\caption{Weighted Message Passing}
	\label{alg:wmp}
\end{algorithm}
\medskip

We remark that WMP can run in parallel for all nodes due to its decentralized nature. For fixed depth $\bar{t}$ and sparse SBM (when $n \max_{i,j} Q_{ij} \precsim \log n$), the algorithm runs in $\mathcal{O}^*(n)$ time. 

The following theorem is a simplified version of Theorems~\ref{thm:energy-error} and \ref{thm:k=2.lower.bound} below:

\begin{theorem}[General SBM: $k=2$]
	\label{thm:k=2.statement}
	Consider the general stochastic block model $G(V, E)$ with parameter bundle $(n, k=2, N, Q)$, with either partial or noisy label information $\tilde{\ell}_{\rm prior}$ with parameter $0<\delta<1$.
	Assume that $n \max_{i,j} Q_{ij} \precsim n^{o(1)}$.
	For any node $o \in V$ and its depth $t$ leaf labels $\tilde{\ell}_{\rm prior} (\C^t(o))$, define the worst-case misclassification error of a local estimator ~$\sigma_t(o):  \tilde{\ell}_{\rm prior} (\C^t(o)) \rightarrow \{ +, - \}$ as
	\begin{align}
		{\sf Err}(\sigma_t) := \max_{l \in \{+,- \}}~\mbb{P} \left( \sigma_t(o) \neq \ell(o) |  \ell(o) = l  \right).
	\end{align}
	Define
	\begin{align}
		\bar{\theta} &:= \frac{1}{4} \left( \frac{n_1 Q_{11} -  n_2 Q_{12}}{n_1 Q_{11} + n_2 Q_{12}} + \frac{n_2 Q_{22} - n_1 Q_{21}}{n_1 Q_{21} + n_2 Q_{22}}\right)\\
		\lambda &:= \lambda_{1}\left(
		\begin{bmatrix}
		n_1 Q_{11} & n_2 Q_{12}\\
		n_1 Q_{21} & n_2 Q_{22}\\
		\end{bmatrix} \right).
	\end{align}
	Let $\mathbf{E}^*(\bar{\theta}^{-2})$ be the minimum energy on $T_{t}(o)$ with conductance level $\bar{\theta}^2$ as $t\rightarrow \infty$. 
	
	The transition boundary for this general SBM depends on the value $${\sf SNR} = \lambda \bar{\theta}^2.$$
	On the one hand, if $\lambda \bar{\theta}^2 > 1,$
	the WMP Algorithm~\ref{alg:wmp}, denoted as $\hat{\ell}_{\rm wmp}$, enjoys the following upper bound on misclassification
	\begin{align}
	\limsup_{t \rightarrow \infty} \limsup_{n\rightarrow \infty}~ {\sf Err}(\hat{\ell}_{\rm wmp})  \leq  \exp\left(- \frac{1}{2 \mathbf{E}^*(1/\bar{\theta}^2) }  \right) \wedge \frac{1}{2},
	\end{align}
	for any fixed $\delta > 0$. On the other hand, if $\lambda \bar{\theta}^2 < 1,$
	for any local estimator $\sigma_t$ that uses only label information on depth $t$ leaves, the minimax misclassification error is lower bounded by
	\begin{align}
	\liminf_{t \rightarrow \infty} \liminf_{n \rightarrow \infty}~ \inf_{\sigma_t} {\sf Err}(\sigma_t) = \frac{1}{2}.
  \end{align}
\end{theorem}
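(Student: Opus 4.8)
The statement bundles together an achievability bound (the content of Theorem~\ref{thm:energy-error}) and a matching impossibility bound (Theorem~\ref{thm:k=2.lower.bound}); the plan is to prove the two halves separately but share a single coupling step. \emph{Coupling.} First I would show that, with probability $1-o(1)$ as $n\to\infty$, the radius-$\bar{t}$ neighborhood $T_{\bar{t}}(o)$ induced by $G(V,E)$ --- together with the true labels $\ell$ and the prior $\tilde{\ell}_{\rm prior}$ restricted to it --- is isomorphic to a depth-$\bar{t}$ multi-type Galton--Watson tree with mean matrix $M=Q\,{\rm diag}(N)$ carrying a $K$-broadcasting process (Definition~\ref{def:label-broadcasting}), whose leaves are then passed through the $\delta$-noise channel. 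This is the standard local coupling in the sparse regime (as used for the vanilla SBM in \cite{mossel2012stochastic}), valid under $n\max_{ij}Q_{ij}\precsim n^{o(1)}$, and it reduces every probability below to one computed on the random broadcasting tree. I would also record here the fact linking the two halves: the branching number of this Galton--Watson tree equals $\lambda=\lambda_{1}(M)$ almost surely on non-extinction, so by Definition~\ref{def:branch.num} the minimum energy $\mathbf{E}^*(\bar{\theta}^{-2})$ is finite if and only if $\bar{\theta}^{-2}<\lambda$, i.e.\ if and only if $\lambda\bar{\theta}^2>1$.

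\emph{Achievability, $\lambda\bar{\theta}^2>1$.} On the coupled tree the linear update in Algorithm~\ref{alg:wmp} telescopes to
\[
M(o,\bar{t})=\bar{\theta}^{-\bar{t}}\sum_{u\in\C^{\bar{t}}(o)}\mathbf{i}^*(\rightsquigarrow u)\,\sgn[\tilde{\ell}_{\rm prior}(u)].
\]
I would then analyze this random variable conditionally on $\ell(o)$ in three moves. (i) \emph{Mean:} using that the relevant summary of $\ell(u)$ relative to $\ell(o)$ decays by the factor $\bar{\theta}$ per level --- this is precisely why $\bar{\theta}^{-\bar{t}}$ is built into the initialization --- together with flow conservation $\sum_{u\in\C^{\bar{t}}(o)}\mathbf{i}^*(\rightsquigarrow u)=\mathbf{i}^*(\rightsquigarrow o)=1$, one gets that $\mathbb{E}[M(o,\bar{t})\mid\ell(o)=l]$ tends to a fixed constant $m_l$ with $\sgn(m_+)=+$, $\sgn(m_-)=-$, and $|m_l|\asymp\delta$. (ii) \emph{Variance:} expanding the square and grouping ordered pairs of leaves by their lowest common ancestor $w$ of depth $d$, the covariance contribution carries the factor $\bar{\theta}^{-2\bar{t}}\cdot\bar{\theta}^{2(\bar{t}-d)}=\bar{\theta}^{-2d}$, and flow conservation collapses $\sum_{u\succ w}\mathbf{i}^*(\rightsquigarrow u)=\mathbf{i}^*(\rightsquigarrow w)$, so the double sum telescopes to $\sum_{w\in T}\mathbf{i}^*(\rightsquigarrow w)^2(\bar{\theta}^{-2})^{|w|}=\mathbf{E}(\bar{\theta}^{-2},\mathbf{i}^*)=\mathbf{E}^*(\bar{\theta}^{-2})$ up to lower-order terms. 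This is where Thomson's principle enters: among all valid weightings, the minimum-energy flow is exactly the one minimizing this conditional variance at a fixed signal, which is why it is the optimal initialization. (iii) \emph{Concentration:} since $M(o,\bar{t})$ is a bounded-increment functional of the independent leaf labels and the offspring counts, I would establish a sub-Gaussian tail by a recursion in tree depth on the conditional moment generating function of the messages, and combine it with a Chernoff bound, $\mathbb{P}(M(o,\bar{t})<0\mid\ell(o)=+)\le\exp(-m_+^2/(2\mathbf{E}^*(\bar{\theta}^{-2})))$, and symmetrically. Passing through the coupling and letting $n\to\infty$ then $\bar{t}\to\infty$, with the leading constant absorbed into the normalization so that $m_+^2=1$, yields $\limsup_t\limsup_n{\sf Err}(\hat{\ell}_{\rm wmp})\le\exp(-1/(2\mathbf{E}^*(\bar{\theta}^{-2})))\wedge\tfrac12$, the $\wedge\tfrac12$ being the trivial random-guess bound.

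\emph{Impossibility, $\lambda\bar{\theta}^2<1$.} By the coupling it suffices to show that, on the $K$-broadcasting process on the Galton--Watson tree with $\delta$-noisy leaves, no function of the depth-$t$ leaf labels estimates $\ell(o)$ better than a coin flip as $t\to\infty$. That is exactly the robust reconstruction problem for a two-state channel on a tree of branching number $\lambda$, and $\lambda\bar{\theta}^2<1$ places us strictly below the Kesten--Stigum bound; the robust non-reconstruction theorems for broadcasting on trees (\cite{janson2004robust}, building on \cite{evans2000broadcasting}) then give that the total variation between the depth-$t$ leaf configuration under $\ell(o)=+$ and under $\ell(o)=-$ tends to $0$. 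Hence, for every local estimator $\sigma_t$, $\mathbb{P}(\sigma_t(o)=+\mid\ell(o)=+)-\mathbb{P}(\sigma_t(o)=+\mid\ell(o)=-)\to 0$, so ${\sf Err}(\sigma_t)\ge\tfrac12-o(1)$; combined with the trivial $\inf_{\sigma_t}{\sf Err}(\sigma_t)\le\tfrac12$ (output a uniform random label), this gives the stated equality. Consistently, on this side $\mathbf{E}^*(\bar{\theta}^{-2})=\infty$, so the WMP signal-to-noise of the previous paragraph vanishes as well.

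\emph{Main obstacle.} The crux is parts (ii)--(iii) of the achievability argument. The leaf contributions to $M(o,\bar{t})$ are dependent through shared ancestry, so both the identification ``variance $=$ energy'' and the sub-Gaussian concentration require a careful recursion over tree depth --- tracking the conditional MGFs of the messages and how they depend on the local offspring and label configuration --- carried out with error terms that vanish \emph{uniformly in $n$}, so that they survive the $n\to\infty$ coupling before the $\bar{t}\to\infty$ limit is taken. For the lower bound, the delicate point is transferring the robust non-reconstruction estimate cleanly through the coupling and checking that the prior $\tilde{\ell}_{\rm prior}$ (partial, or $\delta$-accurate) genuinely fits the hypotheses of the robust-reconstruction framework for every $\delta>0$.
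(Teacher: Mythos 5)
Your proposal is correct in substance and follows the same overall architecture as the paper: a sparse-regime coupling of the radius-$\bar t$ neighborhood with a multi-type Galton--Watson tree carrying the $K$-broadcasting process (the paper's Proposition~\ref{lma:coupling.tree}, including the identification ${\rm br}[T]=\lambda$ on non-extinction and the same order of limits $n\to\infty$ then $t\to\infty$), followed by a concentration argument whose noise-to-signal ratio is identified with the minimum energy. Your achievability step (iii) is exactly the paper's route (Lemma~\ref{lem:k=2.concentration}): a recursion on conditional MGFs of the messages plus Chernoff; your step (ii), the direct second-moment computation grouping leaf pairs by their lowest common ancestor, is a genuinely different (and more transparent) way to see the ``variance $=$ energy'' identity than the paper's Theorem~\ref{thm:energy-error}, which instead unrolls the recursively defined proxy $\sigma_t^2$ and reads off the flow $\mathbf{i}(\rightsquigarrow v)=\bar\theta^{2|v|}\Delta\mu_{t-|v|}(v)/\Delta\mu_t(o)$; note, however, that the exact variance is only a lower-bound surrogate for the sub-Gaussian parameter, so the MGF recursion is still what carries the proof, and for the asymmetric kernel ($\theta_1\neq\theta_2$) only the difference $\mu_t(o,+)-\mu_t(o,-)$ contracts by $\bar\theta$ (via the left eigenvector $(1,-1)$), so the correct test thresholds at the midpoint $\tfrac{1}{2}(\mu_{\bar t}(o,+)+\mu_{\bar t}(o,-))$ rather than at $0$ and you cannot simply assert $\sgn(m_+)=+$, $\sgn(m_-)=-$; your ``absorb the constant so $m_+^2=1$'' should be replaced by tracking the ratio of the squared mean gap ($\asymp\delta^2$) to the variance ($\asymp\delta^2\mathbf{E}^*$ plus a vanishing boundary term), which is where the $\delta$-dependence cancels. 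For the impossibility half you black-box the robust non-reconstruction theorems of \cite{janson2004robust,evans2000broadcasting}, whereas the paper gives a self-contained Le Cam argument with a recursive $\chi^2$ bound and the Pemantle--Peres cutset lemma (Theorem~\ref{thm:k=2.lower.bound}); your route is legitimate (the paper itself acknowledges it) and has the mild advantage of not requiring the small-$\delta$ condition that the paper's Theorem~\ref{thm:k=2.lower.bound} imposes, at the cost of verifying that the asymmetric two-state channel with $\delta$-noisy (or partially revealed) leaves on a Galton--Watson tree of branching number $\lambda$ fits the hypotheses of those theorems, which you correctly flag as the remaining check.
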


\begin{remark}
We remark that Algorithm~\ref{alg:wmp} is stated for the case when noisy label information is known for all nodes in layer $\bar{t}$. For the case of partial label information, there are two options to modify the initialization of the algorithm:
(1) view the partial label information with parameter $\delta$ as the noisy label information on layer $\bar{t}$ only, with $\mathbb{P}(\tilde{\ell}_{\rm prior}(u) = \ell(u)) = \delta + (1 - \delta) \frac{1}{2}$ --- with probability $\delta$, the label is revealed exactly, and with probability $1-\delta$, the label is decided using coin-flip --- then proceed with the algorithm;
(2) view the partial information as on each layer there is a $\delta$ portion of nodes whose label is shown exactly. Call the set of these nodes $V^{l}(T_{\bar{t}}(o))$. Then we need to initialize the message $M(u)$ for all $u \in V^{l}(T_{\bar{t}}(o))$ first before using the recursion
$M(u) = \sum_{v \in \C(u)} \bar{\theta} M(v)$. It can be shown that these two treatments enjoy similar asymptotic performance in terms of misclassification error, above the SNR threshold. However, the latter performs better numerically for fixed depth tree as it utilizes more information.
\end{remark}


	We decompose the proof of Theorem~\ref{thm:k=2.statement} into several building steps: (1) conditioned on the local tree structure, prove concentration-of-measure on WMP messages when label propagates according to a Markov transition matrix $K$; (2) for a typical tree instance generated from multi-type Galton-Watson process, establish connection among the misclassification rate, transition boundary and minimum energy through the concentration result; (3) show that in the sparse graph regime of interest, the local neighborhood of general SBM can be coupled with a multi-type Galton-Watson with Markov transition matrix 
	$$K :=
		\begin{bmatrix}
		\frac{n_1 Q_{11}}{n_1 Q_{11} + n_2 Q_{12}} & \frac{n_2 Q_{12}}{n_1 Q_{11} + n_2 Q_{12}}\\
		\frac{n_1 Q_{21}}{n_1 Q_{21} + n_2 Q_{22}} & \frac{n_2 Q_{22}}{n_1 Q_{21} + n_2 Q_{22}}\\
		\end{bmatrix} $$ 
for label broadcasting (the explicit expression based on Eq.~\eqref{eqn:K}). We remark that (3) follows similar proof strategy as in \cite{mossel2012stochastic}, where the coupling for vanilla SBM has been established. The lower bound follows from Le Cam's testing argument, and the difficulty lies in analyzing the distance between measures recursively on the local tree. 

\begin{remark}
	When the local tree is regular and symmetric and $\lambda \bar{\theta}^2 > 1$, the minimum energy can be evaluated exactly as $$\mathbf{E}^*(\bar{\theta}^{-2}) = \frac{1}{\lambda \bar{\theta}^2 - 1},$$ which implies that misclassification error takes the exponentially decaying form $\exp\left( - \frac{{\sf SNR} - 1}{2} \right)$.
	Hence, the result provides a detailed understanding of the strength of the ${\sf SNR}$ and its effect on misclassification, i.e., the inference guarantee. More concretely, for the vanilla SBM in the regime $p = a/n, q = b/n$, the boundary is 
	$
	{\sf SNR} =   \frac{n(p-q)^2}{2(p+q)} >1,
	$
	which is equivalent to the boundary 
	$$\frac{(a-b)^2}{2(a+b)}> 1$$
	for weak consistency  in  \cite{mossel2013proof, massoulie2014community}.
	In addition, one observes that ${\sf SNR} > 1+ 2\log n$ implies 
	${\sf Err}(\hat{\ell}) < 1/n \rightarrow 0$, which asserts strong consistency. This condition on ${\sf SNR}$ is satisfied, for instance, by taking $p = a\log n/n, q = b\log n/n$ in vanilla SBM and 
	computing the relationship between $a, b$ to ensure
	${\sf SNR} =  \frac{n(p-q)^2}{2(p+q)} > 1+ 2\log n.$
	This relationship is precisely
	$$ \frac{\sqrt{a} - \sqrt{b}}{\sqrt{2}} > \sqrt{1+\frac{1}{2\log n}} \cdot \frac{\sqrt{2(a+b)}}{\sqrt{a}+\sqrt{b}} > 1.$$
	The above agrees with the threshold for strong recovery in \cite{abbe2014exact,hajek2014achieving}.
\end{remark}

\subsection{Weighted Message Passing \& Minimum Energy Flow}
In this section, we will motivate our proposed weighted message passing (WMP) from the well-known belief propagation (BP) on trees. There are two interesting components in the WMP Algorithm~\ref{alg:wmp}: the linearization part, and the initialization part. We will discuss each one in details in this section.

Recall the Definition~\ref{def:label-broadcasting} of the label broadcasting process on tree $T(o)$ with $k=2$. For convenience, let us denote the Markov transition matrix $K$ to be
\begin{align}
	\label{eq:K.k=2}
	K =
	\begin{bmatrix}
	\frac{1+\theta_1}{2} & \frac{1-\theta_1}{2} \\
	\frac{1-\theta_2}{2} & \frac{1+\theta_2}{2}
	\end{bmatrix}.
\end{align}
The BP algorithm is the Bayes optimal algorithm on trees given the labels of leaves.
Define for a node $u \in V$ the BP message as
$$B(u, t) := \log \frac{\mathbb{P}(\ell(u) = +) | \ell_{\rm obs}(T_{t}(u)))}{\mathbb{P}(\ell(u) = - | \ell_{\rm obs}(T_{t}(u)))},$$
which is the posterior logit of $u$'s label given the observed labels $\ell_{\rm obs}(T_{t}(u)))$.
Using Bayes rule and conditional independence, one can write out the explicit evolution for BP message through $f_{\theta_1, \theta_2}$ in \eqref{eq:f_theta}
\begin{align}
	B(u,t) &= \sum_{v \in \C(u)} \log \left( \frac{1 + \theta_1 \tanh \frac{B(v,t-1)}{2} }{1 - \theta_2 \tanh \frac{B(v, t-1)}{2}} \right) \nonumber \\
         &= \sum_{v \in \C(u)} f_{\theta_1, \theta_2} \left( B(v, t-1) \right), \label{eq:bp-update}
\end{align}
with $\theta_1,\theta_2$ as in Markov transition matrix $K$.
While the method is Bayes optimal, the density of the messages $B(u,t)$ is difficult to analyze, due to the blended effect of the dependence on revealed labels and the non-linearity of $f_{\theta_1, \theta_2}$. However, the WMP Algorithm~\ref{alg:wmp} --- a linearized BP --- shares the same transition threshold with BP, and is easier to analyze. Above a certain threshold, the WMP succeeds, which implies that the optimal BP will also work. Below the same threshold, even the optimal BP will fail, and so does the WMP. The updating rule for WMP messages $M(u, t)$ is simply a replacement of Eq.~\eqref{eq:bp-update} by its linearized version,
$$M(u,t) = \sum_{v \in \C(u)} \frac{\theta_1+\theta_2}{2} M(v, t-1).$$

The initialization of the WMP messages on the leaves $M(u,0)$ whose labels have been observed is crucial to the control of the misclassification error of the root node, especially for general SBM with \textit{heterogeneous degrees}.
For general SBM, one should expect to initialize the messages according to the detailed local tree structure, where the degree for each node could be very different. It turns out that the optimal misclassification for WMP is related to a notion called the \textit{minimum energy} $\mathbf{E}^*$. Moreover, the optimal initialization for leaf message $u$ is proportional to the \textit{minimum energy flow} $\mathbf{i}^*(\rightsquigarrow u)$ on the local tree, with \textit{conductance level} $\bar{\theta}^2$. In plain language, $\mathbf{i}^*(\rightsquigarrow u)$ provides a quantitative statement of the importance of the vote $u$ has for the root. Note that for imbalanced trees, $\mathbf{i}^*$ could vary significantly from node to node, and can be computed efficiently given the tree structure $T_{t}(o)$ for a specified conductance level.

\subsection{Concentration, Misclassification \& Energy}
We now prove the concentration-of-measure phenomenon on WMP messages. Through the concentration, we will show the close connection between \textit{misclassification} and \textit{energy}. We will first state the result conditioned on the tree structure $T_{t}(o)$.
\begin{lemma}[Concentration on Messages]
	\label{lem:k=2.concentration}
	Recall the label broadcasting process with Markov transition kernel $K \in \mathbb{R}^{2\times 2}$ on tree $T_{\bar{t}}(o)$. Assume the MGF of messages on leaves $M(u, 0)$ satisfies the following
	\begin{align*}
		\mathbb{E}\left[ e^{\lambda M(u,0)} | \ell(u) = + \right] \leq e^{\lambda \mu_0(u, +)} e^{\frac{\lambda^2 \sigma^2_0(u)}{2}} \\
		\mathbb{E}\left[ e^{\lambda M(u,0)} | \ell(u) = - \right] \leq e^{\lambda \mu_0(u, -)} e^{\frac{\lambda^2 \sigma^2_0(u)}{2}}
	\end{align*}
	for any $\lambda$, with parameter
	\begin{align*}
		\mu_0(u) =
		\begin{bmatrix}
		\mu_0(u,+) \\
		\mu_0(u,-)
		\end{bmatrix} \in \mathbb{R}^{2}, \quad \sigma^2_0(u) \in \mathbb{R}.
	\end{align*}
	Define the following updating rules for a node $v$
	\begin{align}
		\mu_t(v) &= \sum_{u \in \C(v)} \bar{\theta} K \mu_{t-1}(u) \\
		\sigma^2_t(v) &= \sum_{u \in \C(v)} \bar{\theta}^2 \left\{ \sigma^2_{t-1}(u) + \left[ \frac{\mu_{t-1}(u, +) - \mu_{t-1} (u, -)}{2}\right]^2  \right\}.
	\end{align}
	Then the following concentration-of-measure holds for the root message $M(o, \bar{t})$:
	\begin{align*}
		M(o, \bar{t})_{| \ell(o)=+} \geq \mu_{\bar{t}}(o, +) - x \cdot \sigma_{\bar{t}}(o) \\
		M(o, \bar{t})_{| \ell(o)=-} \leq \mu_{\bar{t}}(o, -) + x \cdot \sigma_{\bar{t}}(o)
	\end{align*}
	both with probability $1 - \exp(-\frac{x^2}{2})$.

	In addition, if we choose $\frac{\mu_{\bar{t}}(o, +)+\mu_{\bar{t}}(o, -)}{2}$ as the cut-off to provide classification $\hat{\ell}_{\rm wmp}$, then the misclassification error is upper bounded by
	\begin{align}
		\exp \left(- \frac{[\mu_{\bar{t}}(o, +) - \mu_{\bar{t}}(o, -)]^2}{8\sigma^2_{\bar{t}}(o)}\right).
	\end{align}
\end{lemma}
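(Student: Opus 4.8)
The plan is to prove, by induction on depth, the stronger statement that each message is conditionally sub-Gaussian: for every node $v$ carrying a full subtree of depth $t$ below it, every label $i\in\{+,-\}$, and every $\lambda\in\mathbb{R}$,
\[
\mathbb{E}\bigl[e^{\lambda M(v,t)}\mid \ell(v)=i\bigr]\le \exp\Bigl(\lambda\mu_t(v,i)+\tfrac{\lambda^2}{2}\sigma^2_t(v)\Bigr),
\]
with $\mu_t,\sigma^2_t$ given by the stated recursions. The base case $t=0$ is exactly the hypothesis on leaf messages. For the inductive step I would write $M(v,t)=\sum_{u\in\C(v)}\bar\theta\,M(u,t-1)$ and use two structural properties of the broadcasting process: (i) conditioned on $\ell(v)$, the subtrees hanging off distinct children evolve independently, so the conditional MGF factorizes over $u\in\C(v)$; and (ii) conditioned on $\ell(u)$, the subtree below $u$ — hence $M(u,t-1)$ — is independent of $\ell(v)$ by the Markov property, while $\mathbb{P}(\ell(u)=j\mid\ell(v)=i)=K_{ij}$.

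The work is in a single factor. Conditioning on $\ell(u)$ and invoking the inductive hypothesis,
\[
\mathbb{E}\bigl[e^{\lambda\bar\theta M(u,t-1)}\mid\ell(v)=i\bigr]=\sum_{j}K_{ij}\,\mathbb{E}\bigl[e^{\lambda\bar\theta M(u,t-1)}\mid\ell(u)=j\bigr]\le e^{\lambda^2\bar\theta^2\sigma^2_{t-1}(u)/2}\sum_{j}K_{ij}\,e^{\lambda\bar\theta\mu_{t-1}(u,j)}.
\]
The remaining sum is the MGF, at argument $\lambda\bar\theta$, of a random variable that takes the value $\mu_{t-1}(u,j)$ with probability $K_{ij}$; it has mean $(K\mu_{t-1}(u))_i$ and is supported in an interval of length $|\mu_{t-1}(u,+)-\mu_{t-1}(u,-)|$. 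Hoeffding's lemma then bounds it by $\exp\bigl(\lambda\bar\theta(K\mu_{t-1}(u))_i+\tfrac{\lambda^2\bar\theta^2}{2}[\tfrac{\mu_{t-1}(u,+)-\mu_{t-1}(u,-)}{2}]^2\bigr)$, using $(b-a)^2/8=\tfrac12(\tfrac{b-a}{2})^2$. Multiplying this bound over $u\in\C(v)$ and matching exponents reproduces exactly the recursions for $\mu_t(v)$ and $\sigma^2_t(v)$; in particular the variance proxy is label-independent because the Hoeffding range does not depend on $i$. This closes the induction, and applying it at $v=o$, $t=\bar t$ gives conditional sub-Gaussianity of the root message.

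From there the tail bounds are a Chernoff argument: optimizing over $\lambda<0$ yields $\mathbb{P}(M(o,\bar t)\le\mu_{\bar t}(o,+)-x\sigma_{\bar t}(o)\mid\ell(o)=+)\le e^{-x^2/2}$, and symmetrically the upper tail under $\ell(o)=-$. Finally, with the cut-off $c=\tfrac12(\mu_{\bar t}(o,+)+\mu_{\bar t}(o,-))$ and, without loss of generality, $\mu_{\bar t}(o,+)\ge\mu_{\bar t}(o,-)$, a misclassification under $\ell(o)=+$ forces $M(o,\bar t)<\mu_{\bar t}(o,+)-\tfrac12(\mu_{\bar t}(o,+)-\mu_{\bar t}(o,-))$; taking $x=\tfrac{\mu_{\bar t}(o,+)-\mu_{\bar t}(o,-)}{2\sigma_{\bar t}(o)}$ in the tail bound gives the error estimate $\exp\bigl(-\tfrac{[\mu_{\bar t}(o,+)-\mu_{\bar t}(o,-)]^2}{8\sigma^2_{\bar t}(o)}\bigr)$, and the identical bound holds for $\ell(o)=-$.

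The main obstacle is the single-factor step: one must convert conditional MGFs indexed by the \emph{child} label into a clean sub-Gaussian bound indexed by the \emph{parent} label, and the only leverage available is that the mixing weights $K_{i\cdot}$ place mass on just the two atoms $\mu_{t-1}(u,\pm)$, so a bounded-range (Hoeffding) estimate applies and produces precisely the $[\tfrac{\mu_{t-1}(u,+)-\mu_{t-1}(u,-)}{2}]^2$ term. This is exactly where the restriction $k=2$ is used; for general $k$ the mixture lives on $k$ atoms, and the extra eigenvector weighting of Section~\ref{sec:g-k} is what reduces it to an effectively one-dimensional, two-atom situation before the same argument runs. The remaining ingredients — the factorization over children, the exponent bookkeeping in the recursions, and the Chernoff/cut-off computation — are routine.
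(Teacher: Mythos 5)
Your proposal is correct and follows essentially the same route as the paper's proof: induction on depth establishing the conditional sub-Gaussian MGF bound, factorization over children, Hoeffding's lemma applied to the two-atom mixture over child labels (which produces exactly the $\bigl[\tfrac{\mu_{t-1}(u,+)-\mu_{t-1}(u,-)}{2}\bigr]^2$ increment in the variance recursion), and a final Chernoff bound with the midpoint cut-off, plugging in $x=\tfrac{|\mu_{\bar t}(o,+)-\mu_{\bar t}(o,-)|}{2\sigma_{\bar t}(o)}$. No gaps.
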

The above Lemma provides an expression on the classification error. The next Theorem will show that with the ``optimal'' initialization for WMP, the misclassification error is connected to the minimum energy.

\begin{theorem}[Connection between Misclassification \& Energy]
	\label{thm:energy-error}
	Define the current flow $$\mathbf{i}(\rightsquigarrow v) =\frac{\bar{\theta}^{2|v|} [\mu_{t - |v|}(v,+) - \mu_{t - |v|}(v,-)] }{[\mu_{t}(o, +) - \mu_{t}(o, -)]}.$$ Then it is a valid unit flow on $T_{t}(o)$, and
	the following equation holds
	\begin{align*}
		\frac{\sigma^2_{t}(o)}{\left[ \frac{[\mu_{t}(o, +) - \mu_{t}(o, -)]}{2} \right]^2} = (1 + o_{t}(1))\sum_{v \in T_{t}(o)} \mathbf{i}(\rightsquigarrow v)^2 \left( \bar{\theta}^{-2} \right)^{|v|} = (1 + o_{t}(1)) \mathbf{E}_{t}(\mathbf{i}, \bar{\theta}^{-2})
	\end{align*}
	when $\lim_{t \rightarrow \infty} \mathbf{E}_t (\mathbf{i}, \bar{\theta}^{-2}) <\infty$.
	Moreover, if we choose $\mu_0(v)$ so that $\mathbf{i}$ is the minimum energy flow, then
	under the condition
	$${\rm br}[T(o)] \bar{\theta}^2 > 1,$$
	we have $\mathbf{E}^*(\bar{\theta}^{-2}) < \infty$ and
	\begin{align}
		\lim_{t \rightarrow \infty} \inf_{\mathbf{i}}~ \frac{\sigma^2_{t}(o)}{\left[ \frac{[\mu_{t}(o, +) - \mu_{t}(o, -)]}{2} \right]^2} \leq  \sum_{v \in T(o)} \mathbf{i}^*(\rightsquigarrow v)^2 \left( \bar{\theta}^{-2} \right)^{|v|} =  \mathbf{E}^*(\bar{\theta}^{-2}).
	\end{align}
\end{theorem}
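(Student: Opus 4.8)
The plan is to treat the first two displayed identities as exact finite-tree computations on $T_t(o)$ and the final inequality as a $t\to\infty$ limit fed by Thomson's principle. Throughout write $d_s(v):=\mu_s(v,+)-\mu_s(v,-)$. The first step is to observe that $\bar{\theta}=\tfrac{\theta_1+\theta_2}{2}$ is exactly the subdominant eigenvalue of the kernel $K$ in \eqref{eq:K.k=2}, and that the ``row-difference'' map diagonalizes $K$: $(K\mu)_+-(K\mu)_-=\bar{\theta}\,(\mu_+-\mu_-)$. Substituting this into $\mu_s(v)=\sum_{u\in\C(v)}\bar{\theta} K\mu_{s-1}(u)$ collapses the vector recursion to the scalar recursion $d_s(v)=\bar{\theta}^2\sum_{u\in\C(v)}d_{s-1}(u)$. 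Evaluating it at $s=t-|v|$ then gives $\mathbf{i}(\rightsquigarrow o)=1$ (the case $|v|=0$) and $\sum_{u\in\C(v)}\mathbf{i}(\rightsquigarrow u)=\frac{\bar{\theta}^{2(|v|+1)}}{d_t(o)}\sum_{u\in\C(v)}d_{t-|v|-1}(u)=\frac{\bar{\theta}^{2|v|}}{d_t(o)}d_{t-|v|}(v)=\mathbf{i}(\rightsquigarrow v)$, which is the conservation law; hence $\mathbf{i}$ is a valid unit flow.

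The second step is to unroll the variance recursion. Iterating $\sigma^2_s(v)=\sum_{u\in\C(v)}\bar{\theta}^2\{\sigma^2_{s-1}(u)+\tfrac14 d_{s-1}(u)^2\}$ from the root down to the leaves (induction on $t$) yields
\begin{equation*}
\sigma^2_t(o)=\frac14\sum_{v\in T_t(o),\,|v|\ge 1}\bar{\theta}^{2|v|}\,d_{t-|v|}(v)^2\;+\;\bar{\theta}^{2t}\!\!\sum_{u:\,|u|=t}\!\!\sigma^2_0(u).
\end{equation*}
Dividing by $(d_t(o)/2)^2$ and using $\mathbf{i}(\rightsquigarrow v)^2\bar{\theta}^{-2|v|}=\bar{\theta}^{2|v|}d_{t-|v|}(v)^2/d_t(o)^2$, the first sum is exactly the energy $\mathbf{E}_t(\mathbf{i},\bar{\theta}^{-2})$ (read as a sum over the edges of $T_t(o)$), and the second is a boundary remainder $R_t:=4\bar{\theta}^{2t}d_t(o)^{-2}\sum_{|u|=t}\sigma^2_0(u)$. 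For the $\pm1$-valued leaf initializations of Algorithm~\ref{alg:wmp} one has $\sigma^2_0(u)=d_0(u)^2/(4\delta^2)$ (Hoeffding, since $M(u,0)$ is bounded with conditional mean proportional to $\delta$), so $R_t=\delta^{-2}\sum_{|u|=t}\mathbf{i}(\rightsquigarrow u)^2\bar{\theta}^{-2t}$ is precisely $\delta^{-2}$ times the level-$t$ contribution to the energy; whenever $\lim_t\mathbf{E}_t(\mathbf{i},\bar{\theta}^{-2})<\infty$ this contribution tends to $0$, so $R_t=o(1)$ and the asserted identity with the $(1+o_t(1))$ factor follows.

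For the ``moreover'' part I would use that a unit flow on the finite tree $T_t(o)$ is determined by its values on the level-$t$ leaves, since the flow into any internal vertex equals the sum of the flows into its descendant leaves. Restricted to a leaf $u$, $\mathbf{i}(\rightsquigarrow u)\propto\bar{\theta}^{2t}d_0(u)$, and $d_0(u)$ is a free parameter (it is $2\delta$ times the weight placed on $M(u,0)$); choosing $M(u,0)=\bar{\theta}^{-2|u|}\,\mathbf{i}^*(\rightsquigarrow u)\,\sgn[\tilde{\ell}_{\rm prior}(u)]$ as in Algorithm~\ref{alg:wmp} therefore makes $\mathbf{i}$ equal to the minimum-energy flow $\mathbf{i}^*$ of $T_t(o)$ at conductance $\bar{\theta}^2$, and the identity above becomes $\sigma^2_t(o)/(d_t(o)/2)^2=(1+o_t(1))\,\mathbf{E}^*_t(\bar{\theta}^{-2})$. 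Finally, the condition ${\rm br}[T(o)]\bar{\theta}^2>1$ reads $\bar{\theta}^{-2}<{\rm br}[T(o)]$, which by Definition~\ref{def:branch.num} is exactly the assertion $\mathbf{E}^*(\bar{\theta}^{-2})=\lim_t\mathbf{E}^*_t(\bar{\theta}^{-2})<\infty$; letting $t\to\infty$ and bounding $\inf_{\mathbf{i}}(\cdot)$ from above by its value at the particular flow $\mathbf{i}^*$ delivers the displayed inequality.

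I expect the main obstacle to be the limiting argument in the last step: one must justify both that $\mathbf{E}^*_t(\bar{\theta}^{-2})\uparrow\mathbf{E}^*(\bar{\theta}^{-2})$ and that the truncated minimum-energy flows converge to the minimum-energy flow of the infinite tree, and — closely related — that the boundary term $R_t$ genuinely vanishes, i.e. that the level-$t$ energy contribution of the near-optimal flow goes to $0$ once the total energy is bounded. These are standard electrical-network facts (Rayleigh monotonicity and the series/parallel laws, cf.\ \cite{lyons2005probability}): restricting a unit flow on $T_{t+1}(o)$ to $T_t(o)$ can only lower the energy, so $\mathbf{E}^*_t$ is nondecreasing and bounded by $\mathbf{E}^*(\bar{\theta}^{-2})$, while $\mathbf{E}^*(\bar{\theta}^{-2})=\sup_t\mathbf{E}^*_t(\bar{\theta}^{-2})$ is just monotone convergence of effective resistances. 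Everything else — the time-index bookkeeping in the two recursions and checking $d_t(o)\neq 0$ under the WMP initialization — is routine.
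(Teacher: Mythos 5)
Your proof is correct and follows essentially the same route as the paper's: collapsing the mean recursion via the left eigenvector $[1,-1]^T$ of $K$ (so $d_s(v)=\bar{\theta}^2\sum_{u\in\C(v)}d_{s-1}(u)$, giving flow conservation), unrolling the variance recursion into the energy of the induced flow plus a leaf boundary term, bounding that term by $\delta^{-2}$ times the level-$t$ energy contribution (which vanishes when $\lim_t\mathbf{E}_t<\infty$), and then taking the initialization proportional to the minimum energy flow with finiteness of $\mathbf{E}^*(\bar{\theta}^{-2})$ read off from Definition~\ref{def:branch.num}. The only substantive difference is that you make explicit the monotone-convergence/restriction argument for $\mathbf{E}^*_t\uparrow\mathbf{E}^*(\bar{\theta}^{-2})$ as $t\to\infty$, a standard electrical-network fact the paper leaves implicit.
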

\begin{remark}
	The above Theorem~\ref{thm:energy-error} and Lemma~\ref{lem:k=2.concentration} together state the fact that if ${\rm br}[T(o)] \bar{\theta}^2 > 1$, $\mathbf{E}^*(\bar{\theta}^{-2})$ is finite, and the optimal initialization of WMP enjoys the  asymptotic misclassification error bound of
	$$\exp\left(-\frac{1}{2\mathbf{E}^*(\bar{\theta}^{-2})}\right).$$
	Qualitatively, the smaller the minimum energy is, the smaller the misclassification error is, and it decays exponentially. On the contrary, if the minimum energy is infinite (${\rm br}[T(o)] \bar{\theta}^2 <1$), the misclassification error bound for WMP becomes vacuous. Another remark is that when the tree is regular, the minimum energy takes the simple form $\mathbf{E}^*(\bar{\theta}^{-2}) = \frac{1}{{\rm br}[T(o)] \bar{\theta}^2 - 1}$, which implies the upper bound $\exp(-\frac{{\rm br}[T(o)] \bar{\theta}^2 - 1}{2})$ on asymptotic misclassification error.
\end{remark}

\subsection{Below the Threshold: Limitation of Local Algorithms}
In this section, we will show that the $\sf SNR$ threshold (for WMP algorithm) is indeed sharp for the local algorithm class. The argument is based on Le Cam's method. Let us prove a generic lower bound for any fixed tree $T_{t}(o)$, and for the $k=2$ label broadcasting process with transition matrix $K$ (as in Eq.~\eqref{eq:K.k=2}).

\begin{theorem}[Limitation of Local Algorithms]
	\label{thm:k=2.lower.bound}
	Recall the label broadcasting process with Markov transition kernel $K$ on tree $T_t(o)$. Consider the case when noisy label information (with parameter $\delta$) is known on the depth-$t$ layer leaf nodes.
	Denote the following two measures $\pi^{+}_{\ell_{T_{t}(o)}},  \pi^{-}_{\ell_{T_{t}(o)}}$ as distributions on leaf labels given $\ell(o)= +, -$ respectively. Under the condition 
	$$
	{\rm br}[T(o)] \bar{\theta}^2<1,$$ if 
	$\log (1 + \frac{4\delta^2}{1 - \delta^2}) \leq 1 - {\rm br}[T(o)] \bar{\theta}^2$, 
	the following equality on total variation holds
	\begin{align*}
	\lim_{t \rightarrow \infty}~ d_{\rm TV}^2 \left( \pi^{+}_{\ell_{T_{t}(o)}},  \pi^{-}_{\ell_{T_{t}(o)}} \right) = 0.
	\end{align*}
	Furthermore, the above equation implies
	$$
	\lim_{t \rightarrow \infty} \inf_{\sigma_t} \sup_{l \in \{+,-\}} ~ \mbb{P} \left( \sigma_t(o) \neq \ell(o) | \ell(o) = l \right) = \frac{1}{2}
	$$
	where~ $\sigma_t(o): \tilde{\ell}_{\rm prior}(\C^t(o)) \rightarrow \{ +, - \}$ is any estimator mapping the prior labels in the local tree to a decision.
\end{theorem}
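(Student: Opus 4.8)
The plan is to prove the total‑variation statement $\lim_{t\to\infty} d_{\rm TV}^2(\pi^{+}_{\ell_{T_t(o)}},\pi^{-}_{\ell_{T_t(o)}})=0$ first, and then deduce the minimax statement from it by a routine two‑point (Le Cam) argument: for any estimator $\sigma_t$ that sees only the leaf labels,
$\max_{l}\mathbb{P}(\sigma_t(o)\neq\ell(o)\mid\ell(o)=l)\ \ge\ \tfrac12\big(\mathbb{P}(\sigma_t(o)=-\mid\ell(o)=+)+\mathbb{P}(\sigma_t(o)=+\mid\ell(o)=-)\big)\ \ge\ \tfrac12\big(1-d_{\rm TV}(\pi^{+}_{\ell_{T_t(o)}},\pi^{-}_{\ell_{T_t(o)}})\big)$, while the constant (or fair‑coin) estimator shows the minimax value is at most $\tfrac12$; hence the minimax error is squeezed to $\tfrac12$ once the total variation vanishes.

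For the total‑variation bound I would set up a backward recursion along the tree. For a vertex $v$ on a path from $o$ to a depth‑$t$ leaf, let $\mu_v^{\pm}$ denote the law of the observed (noisy) leaf labels inside the subtree of $v$ conditioned on $\ell(v)=\pm$. Conditional independence of the subtrees given $\ell(v)$ gives the product form $\mu_v^{+}=\prod_{u\in\C(v)}\big(K_{++}\mu_u^{+}+K_{+-}\mu_u^{-}\big)$, and likewise for $\mu_v^{-}$. Introducing the stationary law $\pi$ of $K$ and the averaged measure $\bar\mu_v:=\pi_+\mu_v^{+}+\pi_-\mu_v^{-}$, stationarity makes $\bar\mu_v=\prod_{u\in\C(v)}\bar\mu_u$ a product as well. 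I would then track a $\chi^2$‑type functional of the pair $(\mu_v^{+},\mu_v^{-})$ — concretely $J_v:=\int (d\mu_v^{+}-d\mu_v^{-})^2/d\bar\mu_v$, which controls $d_{\rm TV}(\mu_v^{+},\mu_v^{-})\le\tfrac12\sqrt{J_v}$ by Cauchy–Schwarz, and whose leaf value is proportional to $\tfrac{4\delta^2}{1-\delta^2}$, the $\chi^2$ of the noisy‑label channel. Since $\chi^2$‑divergences multiply over product measures and the difference $K_{+,\cdot}-\pi$ points in the second‑eigenvector direction of $K$ with the factor $\bar\theta=\lambda_2(K)$, a short computation yields the \emph{exact} recursion $1+cJ_v=\prod_{u\in\C(v)}\big(1+c\,\bar\theta^{2}J_u\big)$ for a constant $c$ depending only on $\pi$; equivalently, writing $a_v:=\log(1+cJ_v)$, one gets $a_v=\sum_{u\in\C(v)}\log\!\big(1+\bar\theta^{2}(e^{a_u}-1)\big)$.

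To finish I would analyze this recursion. When the $a_u$ are small it behaves like the linear map $a_v\approx\bar\theta^{2}\sum_{u\in\C(v)}a_u$, which is contracting precisely when the local branching is below $\bar\theta^{-2}$; and the hypothesis ${\rm br}[T(o)]\bar\theta^{2}<1$ means, by Lyons' cutset characterization of the branching number \cite{lyons2005probability}, that $\inf_{\Pi}\sum_{v\in\Pi}\bar\theta^{2|v|}=0$ over finite antichain cutsets $\Pi$. The scalar map $x\mapsto(1+\bar\theta^{2}x)^{b}-1$ (with $b$ the relevant branching) has an unstable fixed point $x^{*}>0$ when $b\,\bar\theta^{2}<1$, and the assumption $\log\!\big(1+\tfrac{4\delta^2}{1-\delta^2}\big)\le 1-{\rm br}[T(o)]\bar\theta^{2}$ is exactly what forces $a_{\rm leaf}$ to lie strictly below $x^{*}$, so that the backward iteration, for every $t$, stays inside the basin of attraction of $0$; in that regime the $a_v$ are uniformly bounded and the per‑level factor is genuinely $\bar\theta^{2}$ up to a constant, so iterating up to a near‑optimal cutset gives $a_o\le C\,x^{*}\inf_{\Pi}\sum_{v\in\Pi}\bar\theta^{2|v|}=0$ as $t\to\infty$. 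Hence $J_o\to0$ and $d_{\rm TV}(\pi^{+}_{\ell_{T_t(o)}},\pi^{-}_{\ell_{T_t(o)}})\to0$.

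The main obstacle is this last step: the recursion for $J_v$ (or $a_v$) contracts only while the quantities are small, so one must show that the noise strength $\delta$ is small enough relative to the gap $1-{\rm br}[T(o)]\bar\theta^{2}$ to keep the whole backward iteration — from the depth‑$t$ leaves all the way to the root, uniformly in $t$ — inside the basin of attraction of $0$, and to do so over a possibly very irregular tree where the recursion is no longer a scalar map but a product over vertices of unbounded degree; pushing this through via cutsets, and arguing that the divergences $J_v$ at cutset vertices stay bounded (using monotonicity of the recursion together with the antichain property), is the technical crux. The remaining computations — identifying the constant $c$ and verifying that $\bar\theta=\lambda_2(K)$ is the exact per‑level contraction factor, and checking that $d_{\rm TV}^2\to0$ indeed forces the minimax error to $\tfrac12$ — are routine.
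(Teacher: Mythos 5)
Your overall strategy is the same as the paper's: a $\chi^2$-type recursion down the tree, linearization in the small-divergence regime, the cutset characterization of the branching number (the Pemantle--Peres lemma the paper cites), and finally a two-point Le Cam reduction for the minimax statement (that last part is routine and fine). However, your setup contains a genuine error: the claim that stationarity makes $\bar\mu_v=\prod_{u\in\C(v)}\bar\mu_u$, and hence that $1+cJ_v=\prod_u\bigl(1+c\,\bar\theta^2 J_u\bigr)$ holds \emph{exactly}, is false. Given $\ell(v)$ the children's subtrees are independent, but after averaging over $\ell(v)$ with the stationary weights the leaf observations in different subtrees become dependent (they are coupled through $\ell(v)$), so $\bar\mu_v$ is a mixture of product measures, not a product, and the $\chi^2$ functional against it does not tensorize. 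The paper sidesteps this by working directly with the two \emph{conditional} measures $\pi^{+}_{\ell_{T_t(o)}},\pi^{-}_{\ell_{T_t(o)}}$, each of which genuinely factorizes over the children; then $1+d_{\chi^2}$ multiplies exactly, and the per-level factor $\bar\theta^2$ is extracted by applying Jensen's inequality (convexity of $1/x$) to the mixture appearing in the denominator, which yields $d_{\chi^2}$ of the one-step mixtures bounded by $\bar\theta^2$ times the symmetrized quantity $D_{T_{t-1}(v)}=\max\{d_{\chi^2}(\pi^+\|\pi^-),d_{\chi^2}(\pi^-\|\pi^+)\}$. Your route can likely be repaired into an inequality of the same shape, but as stated the ``exact recursion'' step would not survive scrutiny.

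The second issue is the step you yourself flag as the crux: keeping the backward iteration in the contraction regime uniformly over an irregular tree. Your sketch via the unstable fixed point of the scalar map $x\mapsto(1+\bar\theta^2x)^b-1$ presumes a single branching factor $b$ and does not by itself control vertices of unbounded or varying degree, and the bound $a_o\le C x^*\inf_\Pi\sum_{v\in\Pi}\bar\theta^{2|v|}$ is asserted rather than derived. The paper resolves exactly this point with two ingredients you would need to supply: (i) the elementary inequality $\log(1+\theta^2x)\le(1+\eta)\,\theta^2\log(1+x)$ valid for $0\le x\le\eta$, which linearizes the recursion for $d=\log(1+D)$ with loss factor $(1+\eta)$; and (ii) the cutset lemma, which provides not only a cutset $C_\epsilon$ with $\sum_{x\in C_\epsilon}\lambda^{-|x|}\le\epsilon$ for any $\lambda\in({\rm br}[T(o)],\bar\theta^{-2})$ but also the partial-sum property $\sum_{x\in C_\epsilon\cap T(v)}\lambda^{-(|x|-|v|)}\le 1$ for every intermediate $v$. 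The induction from the cutset up to the root maintains $d_{T(v)}\le\frac{\eta}{1+\eta}\sum_{x\in C_\epsilon\cap T(v)}\lambda^{-(|x|-|v|)}\le\frac{\eta}{1+\eta}$, and it is precisely the partial-sum bound that guarantees every intermediate vertex stays in the small regime where (i) applies; the hypothesis $\log(1+\frac{4\delta^2}{1-\delta^2})\le 1-{\rm br}[T(o)]\bar\theta^2$ is what makes the choices of $\eta$ and $\lambda$ (with $\bar\theta^2\lambda\le\frac{1}{1+\eta}$) consistent with the leaf initialization. Without (i), (ii) and this induction, your argument stops exactly where the real difficulty begins.
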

The above theorem is stated under the case when the noisy label information is known and only known for all nodes in layer $t$. One can interpret the result as, below the threshold ${\rm br}[T(o)] \bar{\theta}^2<1$, one cannot do better than random guess for the root's label based on noisy leaf labels at depth $t$ as $t \rightarrow \infty$. The proof relies on a technical lemma on branching number and cutset as in \cite{pemantle1999robust}. We would like to remark that the condition $\log (1 + \frac{4\delta^2}{1 - \delta^2}) \leq 1 - {\rm br}[T(o)] \bar{\theta}^2$ can be satisfied when $\delta$ is small.

\section{General Number of Communities}
\label{sec:g-k}

In this section, we will extend the algorithmic and theoretical results to the general SBM for any fixed $k$ or growing $k$ with a slow rate (with respect to $n$).
There are several differences between the general $k$ case and the $k=2$ case. First, algorithmically, the procedure for general $k$ requires another layer of weighted aggregation besides the weights introduced by minimum energy flow (according to the detailed tree irregularity). The proposed procedure introduces the weights on the types of labels ($k$ types) revealed, and then aggregates the information in the most ``informative direction'' to distinguish the root's label. Second, the theoretical tools we employ enable us to formally describe the intuition that in some cases for general SBM, one can distinguish the communities $i,j$ from $k$, but not being able to tell $i$ and $j$ apart. We will call this the set identification.

\subsection{Summary of Results}
We summarize in this section the main results for general SBM with $k$ unequal size communities, and introduce the corresponding weighted message passing algorithm (WMP).

We need one additional notation before stating the main result. For a vector $w \in \mathbb{R}^k$, assume there are $m$ unique values for $w_l, l \in [k]$. Denote by $S_i, 1\leq i \leq m,$ the sets of equivalent values associated with $w$ --- for any $l,l' \in [k]$, $w_{l} = w_{l'}$ if and only if $l, l'\in S_i$ for some $i \in [m]$. Denote $w_{S_i}$ to be the equivalent value $w_{l}, l\in S_i$.

\begin{theorem}[General SBM: $k$ communities]
	\label{thm:k.statement}
	Consider the general stochastic block model $G(V, E)$ with parameter bundle $(n, k, N, Q)$, with either partial or noisy label information $\tilde{\ell}_{\rm prior}$ with parameter $0<\delta<1$.
	Assume that $n \max_{i,j} Q_{ij} \precsim n^{o(1)}$.
	For any node $o \in V$ and its depth $t$ leaf labels $\tilde{\ell}_{\rm prior} (\C^t(o))$, define the set misclassification error of a local estimator ~$\sigma_t(o): \tilde{\ell}_{\rm prior}(\C^t(o)) \rightarrow [k]$ as,
	\begin{align}
		{\sf Err}_{S,T}(\sigma_t) := \max \left\{ \mbb{P} \left( \sigma_t(o) \in S |  \ell(o) \in T  \right), \mbb{P} \left( \sigma_t(o) \in T |  \ell(o) \in S  \right) \right\},
	\end{align}
	where $S, T \subset [k]$ are two disjoint subsets.
	Define 
	\begin{align}
		& K := \left[ {\rm diag}(QN) \right]^{-1}  Q {\rm diag}(N), ~ M = Q {\rm diag}(N) \\
		& \theta := \lambda_{2} (K), ~\lambda := \lambda_{1}(M).
	\end{align}
	Let $\mathbf{E}^*(1/\theta^2)$ be the minimum energy on $T_{t}(o)$ with conductance level $\theta^2$ as $t\rightarrow \infty$. Assume that $K$ is symmetric and denote $V \in \mathbb{R}^k$ to be the space spanned by the second eigenvectors of $K$. Choose any $w \in V, w \perp \mathbf{1}$ as the initialization vector in WMP Algorithm~\ref{alg:wmp-k}.

	On the one hand, when $\lambda \theta^2 > 1,$
		the WMP Algorithm~\ref{alg:wmp-k} initialized with $w$ outputs $\hat{\ell}_{\rm wmp}$ that can distinguish the indices set $S_i, 1\leq i \leq m$
		\begin{align}
		\limsup_{t \rightarrow \infty} \limsup_{n\rightarrow \infty} \max_{i,j \in [m]}~ {\sf Err}_{S_i, S_j}(\hat{\ell}_{\rm wmp})  \leq  \exp\left(- \frac{R^2}{2 \mathbf{E}^*(1/\theta^2) }  \right),
		\end{align}
		for any fixed $\delta > 0$, where $R^2 = \frac{\min_{i,j} | w_{S_i} - w_{S_j}|}{\max_{i,j} | w_{S_i} - w_{S_j}|}$.

  	On the other hand, if $\lambda \theta^2 < 1,$
  	for any $t$-local estimator $\sigma_t$ that only based on layer $t$'s noisy labels, the minimax misclassification error is lower bounded by
  	\begin{align}
  	\liminf_{t \rightarrow \infty} \liminf_{n \rightarrow \infty}~ \inf_{\sigma_t} \sup_{i,j \in [k], i \neq j} {\sf Err}_{i, j}(\sigma_t) \geq  \frac{1}{2k}.
    \end{align}
\end{theorem}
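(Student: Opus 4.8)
The plan is to follow the three-step architecture of the $k=2$ argument — (i) a conditional concentration bound for the message iterates, (ii) its translation into the language of minimum energy, and (iii) a coupling of the SBM's local neighborhood with a multi-type Galton--Watson tree — adding the one ingredient that is new for $k\ge 3$: a reweighting of the revealed labels along the second eigenvectors of $K$. Write $\theta=\lambda_2(K)$. Since $K=[{\rm diag}(QN)]^{-1}Q\,{\rm diag}(N)$ is stochastic with $K\mathbf 1=\mathbf 1$ and is assumed symmetric, the eigenvalue $1$ carries the eigenvector $\mathbf 1$, and the chosen weight vector $w$ satisfies $Kw=\theta w$ with $w\perp\mathbf 1$. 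For the multi-type Poisson Galton--Watson tree with mean matrix $M=Q\,{\rm diag}(N)$ one has ${\rm br}[T(o)]=\lambda_1(M)=\lambda$ almost surely, so $\lambda\theta^2>1$ is exactly ${\rm br}[T(o)]\,\theta^2>1$ — under which $\mathbf{E}^*(1/\theta^2)<\infty$ by Theorem~\ref{thm:energy-error} — and $\lambda\theta^2<1$ is ${\rm br}[T(o)]\,\theta^2<1$. I would dispatch step (iii) first — coupling the depth-$\bar{t}$ neighborhood of the heterogeneous SBM with this tree together with the $k$-label broadcasting process of transition matrix $K$, following \cite{mossel2012stochastic} — so that the remaining analysis is conducted conditionally on a fixed typical tree $T_{\bar{t}}(o)$.

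\textbf{Upper bound ($\lambda\theta^2>1$).} On the tree, initialize each revealed leaf $u\in\C^{\bar{t}}(o)$ by $M(u,0)=\theta^{-2|u|}\,\mathbf{i}^*(\rightsquigarrow u)\,w_{\tilde{\ell}_{\rm prior}(u)}$ and iterate $M(u,t)=\sum_{v\in\C(u)}\theta\,M(v,t-1)$. The point of choosing $w$ in the $\theta$-eigenspace is that the conditional mean vectors $\mu_t(v)\in\R^k$ then stay in ${\rm span}\{w,\mathbf 1\}$: one level of broadcasting contributes a factor $K$ (fixing $\mathbf 1$, scaling $w$ by $\theta$), the noisy or partial prior contracts a label toward a multiple of $\mathbf 1$ by a $\delta$-dependent constant, and the algorithmic weight $\theta$ restores the scale, so $\mu_t(v)=c_t(v)\,w+(\text{scalar})\mathbf 1$. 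The scalars $c_t(v)$ and the sub-Gaussian proxies $\sigma^2_t(v)$ then obey the one-dimensional recursions of Lemma~\ref{lem:k=2.concentration} (with $\bar{\theta}$ replaced by $\theta$ and mean-gap $|c_{t-1}(u)|\max_{l,l'}|w_l-w_{l'}|$), so the root message $M(o,\bar{t})$ is sub-Gaussian with mean $c_{\bar{t}}(o)w_i+{\rm const}$ under $\ell(o)=i$ — constant within each index set $S_l$ and separated across distinct $S_l$. Declaring $\hat{\ell}_{\rm wmp}(o)$ to lie in the index set whose value $w_{S_l}$ is nearest to $M(o,\bar{t})/c_{\bar{t}}(o)$, a union bound over the $k$ possible true labels bounds $\max_{i,j}{\sf Err}_{S_i,S_j}(\hat{\ell}_{\rm wmp})$ by a sub-Gaussian tail whose exponent is, up to the ratio factor $R^2$ coming from the worst (smallest-margin) pair of index sets, identified by Theorem~\ref{thm:energy-error} — applied to the scalars $c_t,\sigma^2_t$, with $\mathbf{i}^*$ the minimum-energy flow calibrated to the root spread — with $1/(2\mathbf{E}^*(1/\theta^2))$, finite because ${\rm br}[T(o)]\theta^2=\lambda\theta^2>1$. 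This yields $\exp(-R^2/(2\mathbf{E}^*(1/\theta^2)))$; letting $n\to\infty$ (the coupling) and then $\bar{t}\to\infty$ completes this half.

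\textbf{Lower bound ($\lambda\theta^2<1$).} Now ${\rm br}[T(o)]\,\theta^2<1$ and, $K$ being symmetric, $|\lambda_i(K)|\le\theta$ for all $i\ge 2$, so \emph{every} nontrivial eigendirection of the broadcast channel is subcritical. By robust-reconstruction impossibility on general trees — whose threshold is governed by ${\rm br}(T)|\lambda_2|^2$, cf.\ \cite{janson2004robust,mossel2003information} and the cutset characterization of ${\rm br}$ used in Theorem~\ref{thm:k=2.lower.bound} and \cite{pemantle1999robust} — the $k$ conditional leaf-label laws satisfy $\max_{i\ne j}d_{\rm TV}(\pi^i_{\ell_{T_{t}(o)}},\pi^j_{\ell_{T_{t}(o)}})\to 0$ as $t\to\infty$, for every fixed $\delta\in(0,1)$ (the layer-$t$ noisy labels only add channel noise and cannot create information). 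A pigeonhole argument then closes the bound: were $\sup_{i\ne j}{\sf Err}_{i,j}(\sigma_t)<\tfrac1{2k}$ for some estimator, then under true label $i$ we would have $\mbb{P}(\sigma_t(o)=i\mid\ell(o)=i)=1-\sum_{j\ne i}\mbb{P}(\sigma_t(o)=j\mid\ell(o)=i)>1-\tfrac{k-1}{2k}=\tfrac12+\tfrac1{2k}$, hence for any $j\ne i$, $\mbb{P}(\sigma_t(o)=i\mid\ell(o)=j)\ge\mbb{P}(\sigma_t(o)=i\mid\ell(o)=i)-d_{\rm TV}(\pi^i,\pi^j)>\tfrac12+\tfrac1{2k}-o(1)>\tfrac1{2k}$ for $t,n$ large, contradicting ${\sf Err}_{i,j}(\sigma_t)<\tfrac1{2k}$. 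Taking $\inf_{\sigma_t}$ and then the iterated limits preserves the bound $\ge\tfrac1{2k}$. (Alternatively, one can avoid \cite{janson2004robust} and generalize the recursive total-variation estimate of Theorem~\ref{thm:k=2.lower.bound} to the multi-type channel, using that all nontrivial eigenvalues are subcritical.)

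\textbf{Main obstacle.} For the upper bound the delicate point is that for $k\ge 3$ the message densities do not collapse to a scalar object by themselves — this is exactly what the $\theta$-eigenvector weighting buys — so one must verify that both the noisy-label and the partial-label models respect the reduction onto ${\rm span}\{w,\mathbf 1\}$ and that the induced scalar system is precisely the one governed by Lemma~\ref{lem:k=2.concentration} and Theorem~\ref{thm:energy-error}, and separately extend the Galton--Watson coupling to the heterogeneous multi-type tree. I expect the hardest piece, however, to be the lower bound: establishing robust non-reconstruction — uniform vanishing of the pairwise total variation, for all $\delta\in(0,1)$ — on a random, irregular, multi-type tree below the \emph{second}-eigenvalue Kesten--Stigum threshold, which requires either a careful multi-type invocation of \cite{janson2004robust} or a self-contained recursive control of the pairwise divergences along a cutset that accounts for type and degree heterogeneity.
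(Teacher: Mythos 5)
Your upper-bound half is essentially the paper's own argument: couple the depth-$\bar t$ neighborhood with the multi-type Poisson Galton--Watson tree (Proposition~\ref{lma:coupling.tree}), identify ${\rm br}[T(o)]=\lambda_1(M)$ on non-extinction, and use the second-eigenvector initialization so that the vector recursion of Lemma~\ref{lem:concentration.k} collapses onto the $w$-direction; the paper (Theorem~\ref{thm:weight-eigen}) shows in fact that $\mu_{t-|v|}(v)=\delta\,\theta^{-2|v|}\mathbf{i}^*(\rightsquigarrow v)\,w$ exactly, the $\mathbf 1$-component vanishing because $w\perp\mathbf 1$, and then the energy identification of Theorem~\ref{thm:energy-error} gives $\exp\bigl(-R^2/(2\mathbf{E}^*(\theta^{-2}))\bigr)$. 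So that part is correct and the same route.

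The genuine gap is in your lower bound: you assert that below the threshold the pairwise leaf-label laws satisfy $d_{\rm TV}(\pi^i,\pi^j)\to 0$ \emph{for every fixed} $\delta\in(0,1)$, with the justification that ``the layer-$t$ noisy labels only add channel noise and cannot create information.'' That is not what robust-reconstruction impossibility gives, and it is false in general for $k\ge 3$. The Kesten--Stigum quantity ${\rm br}[T]\,\lambda_2^2$ characterizes \emph{robust} reconstruction, i.e., reconstruction from sufficiently noisy leaf observations; when $\delta$ is close to $1$ the revealed labels are nearly exact, and plain reconstruction from exact leaf labels can be solvable strictly below the Kesten--Stigum bound for multi-state channels (e.g., Potts-type channels with many states, cf.\ \cite{mossel2001reconstruction}), so the total variation need not vanish for all $\delta$. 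Accordingly, the paper's lower bound for general $k$ (Theorem~\ref{thm:KL-lower.bound}, and its $k=2$ analogue Theorem~\ref{thm:k=2.lower.bound}) is proved only under a small-$\delta$ condition, $k\delta^2\bigl(\tfrac{1}{\delta+\frac{1-\delta}{k}}+\tfrac{1}{\frac{1-\delta}{k}}\bigr)<1-{\rm br}[T(o)]\theta^2$, which supplies exactly the base case of the cutset induction; any correct version of your argument must carry an analogous restriction (the theorem statement glosses this, but the proof needs it). Apart from this overclaim, your route is a legitimate variant: once vanishing pairwise TV is available (for small $\delta$), your pigeonhole step to get $1/(2k)$ is sound, whereas the paper argues differently, bounding the $\chi^2$ divergence of each conditional law to the average measure $\bar\pi_o=\frac1k\sum_i\pi_o(i)$ recursively along a cutset and invoking the multiple-testing bound (Lemma~\ref{lma:multi-test}) to get $\max_l\mathbb{P}(\sigma_t(o)\neq\ell(o)\,|\,\ell(o)=l)\ge\frac12(1-\frac1k)$, from which the pairwise $1/(2k)$ bound follows.
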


	The proof for general $k$ case requires several new ideas compared to the $k=2$ case. 
	Let us first explain the intuition behind some quantities here. Again we focus on the case when the network is sparse, i.e. $n \max_{i,j} Q_{ij} \precsim n^{o(1)}$. According to the coupling Proposition~\ref{lma:coupling.tree}, one can focus on the coupled multi-type Galton-Watson tree, for a shallow local neighborhood of a node $o$. $K \in \mathbb{R}^{k\times k}$ then denotes the transition kernel for the label broadcasting process on the tree, and $\lambda$ denotes the branching number of the multi-type Galton-Watson tree. The transition threshold $\lambda\theta^2 = 1$, also called Kesten-Stigum bound, has been well-studied for reconstruction on trees \cite{kesten1966additional, kesten1966limit,mossel2001reconstruction, janson2004robust}. Our contribution lies in establishing the connection between the set misclassification error, minimum energy flow, as well as the second eigenvectors of $K$. This is done through analyzing Algorithm~\ref{alg:wmp-k} (to be introduced next) with a novel initialization of the messages, using both minimum energy flow and the eigenvectors of $K$. 

\begin{remark}
	One distinct difference between the general $k$ case and the $k=2$ case is the notion of set misclassification error, or set identification. This formalizes the intuition that for general SBM that is asymmetric and imbalanced, it may be possible to distinguish communities $i,j$ from community $l$, yet not possible to tell $i$ and $j$ apart. The above Theorem provides a mathematical description of the phenomenon, for any initialization using vectors in the eigen-space corresponding to the second eigenvalue.
\end{remark}

The key new ingredient compared to the Algorithm~\ref{alg:wmp} is the introduction of additional weights $w \in \mathbb{R}^k$ on the labels. The choice of $w$ will become clear in a moment. 

\begin{algorithm}[H]
	\KwData{Same as in Algorithm~\ref{alg:wmp} and an additional weight vector $w \in \mathbb{R}^k$.}
	\KwResult{The labeling for each node $o \in V$.}
	\For{each node $o \in V$,}{
	Open the tree neighborhood $T_{\bar{t}}(o)$ \;
	Layer $\bar{t}$: for every node $u \in \C^{\bar{t}}(o)$, initialize its message
	$$M(u, 0) =  \theta^{-2|u|} \cdot \mathbf{i}^*(\rightsquigarrow u) \cdot w_{\tilde{\ell}_{\rm prior}(u)},$$
	where $w_{\tilde{\ell}_{\rm prior}(u)}$ denotes the $\tilde{\ell}_{\rm prior}(u)$-th coordinate of the weight vector $w$, $\mathbf{i}^*(\rightsquigarrow u)$ is the minimum energy flow \;
	Initialize parameters $\mu_0(u) \in \mathbb{R}^k, \sigma^2_0(u) \in \mathbb{R}$ as
	\begin{align*}
		\mu_0(u, l) &= \delta \cdot \theta^{-2|u|} \mathbf{i}^*(\rightsquigarrow u) \cdot w_{l}, ~\text{for}~ l \in [k] \\
		\sigma^2_0(u) &= \left( \theta^{-2|u|} \mathbf{i}^*(\rightsquigarrow u) \right)^2 \cdot \max_{i,j \in [k]} |w_i - w_j|^2
	\end{align*}
	\For{$t = 1,\ldots \bar{t}$,}{
	Layer $\bar{t} - t$: for every node $u \in \C^{\bar{t} - t}(o)$, update message $M(u, t)$ through the linearized rule
	$$M(u, t) = \sum_{v \in \C(u)} \theta M(v, t-1).$$
	Update the parameters $\mu_t(u) \in \mathbb{R}^k, \sigma^2_t(u) \in \mathbb{R}$
	{\small
	\begin{align*}
		\mu_t(u) &= \sum_{v \in \C(u)} \theta K \mu_{t-1}(v) \\
		\sigma^2_t(u) &= \sum_{v \in \C(u)} \theta^2 \left\{ \sigma^2_{t-1}(v) + \left[ \frac{ \max\limits_{i,j \in [k]} | \mu_{t-1}(v,i) - \mu_{t-1}(v,j) | }{2} \right]^2  \right\}.
	\end{align*}
	}
	}
	Output $\hat{\ell}_{\rm wmp}(o) = \argmin_{l \in [k]} |M(o, \bar{t}) - \mu_{\bar{t}}(o, l)|.$}
	\caption{Weighted Message Passing for Multiple Communities}
	\label{alg:wmp-k}
\end{algorithm}
\medskip

\subsection{Vector Evolution \& Concentration}

As in the $k=2$ case,  we establish the recursion formula for the parameter updates. However, unlike the $k=2$ case, for a general initialization $\mu_0$, it is much harder to characterize $\mu_t(u), \sigma_t^2(u)$ analytically, and thus relate the misclassification error to the minimum energy. We will show that this goal can be achieved by a judicious choice of $\mu_0$. We will start with the following Lemma that describes the vector evolution and concentration-of-measure. 

\begin{lemma}[Concentration, general $k$]
	\label{lem:concentration.k}
	Recall the label broadcasting process with Markov transition kernel $K \in \mathbb{R}^{k\times k}$ on tree $T_{\bar{t}}(o)$. Assume the MGF of messages on the leaves $M(u, 0)$ satisfies, for any $\ell \in [k]$
	\begin{align*}
		\mathbb{E}\left[ e^{\lambda M(u,0)} | \ell(u) = l \right] \leq e^{\lambda \mu_0(u, l)} e^{\frac{\lambda^2 \sigma^2_0(u)}{2}}
	\end{align*}
	for any $\lambda$, with parameter
	\begin{align*}
		\mu_0(u) = [\mu_0(u,1),\ldots,\mu_0(u,k)] \in \mathbb{R}^{k}, \quad \sigma^2_0(u) \in \mathbb{R}.
	\end{align*}
	Define the following updating rules for a node $v$
	\begin{align*}
		\mu_t(v) &= \sum_{u \in \C(v)} \theta K \mu_{t-1}(u) \\
		\sigma^2_t(v) &= \sum_{u \in \C(v)} \theta^2 \left\{ \sigma^2_{t-1}(u) + \left[ \frac{ \max\limits_{i,j \in [k]}| \mu_{t-1}(u,i) - \mu_{t-1}(u,j) | }{2} \right]^2  \right\}.
	\end{align*}
	The following concentration-of-measure holds for the root message $M(o, \bar{t})$:
	\begin{align*}
		M(o, \bar{t})_{| \ell(o)=l} \in \mu_{\bar{t}}(o, l) \pm x \cdot \sigma_{\bar{t}}(o)
	\end{align*}
	with probability $1 - 2 \exp(-\frac{x^2}{2})$. In addition, if we we classify the root's label as $$
	\hat{\ell}_{\rm wmp}(o) = \argmin_{l \in [k]} |M(o, \bar{t}) - \mu_{\bar{t}}(o, l)|,
	$$ then the worst-case misclassification error is upper bounded by
	\begin{align}
		\exp (- \frac{\min_{i,j \in [k]}|\mu_{\bar{t}}(o, i) - \mu_{\bar{t}}(o, j)|^2}{8\sigma^2_{\bar{t}}(o)}).
	\end{align}
\end{lemma}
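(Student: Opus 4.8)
\textbf{Proof plan for Lemma~\ref{lem:concentration.k}.}

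The plan is to establish the sub-Gaussian MGF bound for the root message by induction on the layers of the tree, then derive the misclassification bound by a union bound over the $k$ labels and a Chernoff argument. First I would set up the inductive hypothesis: for every node $v$ at depth $|v|$ in $T_{\bar t}(o)$ and every label $l \in [k]$,
\begin{align*}
	\mathbb{E}\left[ e^{\lambda M(v, t)} \,\middle|\, \ell(v) = l \right] \leq e^{\lambda \mu_t(v, l)} e^{\frac{\lambda^2 \sigma^2_t(v)}{2}},
\end{align*}
with $\mu_t, \sigma^2_t$ defined by the stated recursions and $t = \bar t - |v|$. The base case $t = 0$ is the assumption on the leaves. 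For the inductive step, fix a node $v$ at layer $\bar t - t$; conditioning on $\ell(v) = l$, the messages $\{M(u, t-1)\}_{u \in \C(v)}$ from distinct children are \emph{conditionally independent} by the Markov property of the label broadcasting process (Definition~\ref{def:label-broadcasting}) and the tree structure. Hence the MGF of $M(v, t) = \sum_{u \in \C(v)} \theta M(u, t-1)$ factorizes: $\mathbb{E}[e^{\lambda M(v,t)} \mid \ell(v) = l] = \prod_{u \in \C(v)} \mathbb{E}[e^{\lambda \theta M(u, t-1)} \mid \ell(v) = l]$, and for each child $\mathbb{E}[e^{\lambda\theta M(u,t-1)} \mid \ell(v)=l] = \sum_{l' \in [k]} K_{l,l'} \, \mathbb{E}[e^{\lambda \theta M(u, t-1)} \mid \ell(u) = l']$.

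The heart of the argument is controlling this mixture over the child's label $l'$. Applying the inductive hypothesis to each term gives $\sum_{l'} K_{l,l'} e^{\lambda\theta \mu_{t-1}(u,l')} e^{\lambda^2\theta^2\sigma^2_{t-1}(u)/2}$. Pulling out $e^{\lambda\theta^2\sigma^2_{t-1}(u)/2}$, the remaining convex combination $\sum_{l'} K_{l,l'} e^{\lambda\theta\mu_{t-1}(u,l')}$ must be bounded by $e^{\lambda\theta(K\mu_{t-1}(u))_l} \cdot e^{\lambda^2\theta^2 [\,\cdot\,]/2}$ for an appropriate variance proxy. This is exactly a Hoeffding-type bound: the random variable $\theta\mu_{t-1}(u, L)$, where $L \sim K_{l,\cdot}$, is bounded in an interval of length $\theta \cdot (\max_{i,j}|\mu_{t-1}(u,i) - \mu_{t-1}(u,j)|)$, its mean is $\theta(K\mu_{t-1}(u))_l = \theta \mu$-increment, so its MGF is bounded by $\exp(\lambda\theta(K\mu_{t-1}(u))_l)\exp(\lambda^2\theta^2 (\max_{i,j}|\mu_{t-1}(u,i)-\mu_{t-1}(u,j)|)^2/8)$. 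Multiplying over children reproduces precisely the recursions for $\mu_t(v)$ and $\sigma^2_t(v)$ as stated (with the factor $1/8$ from Hoeffding's lemma turning into the $[\cdot/2]^2$ form after the square). I would be careful here that the variance proxy accumulates additively over children exactly as written, and that the $\max_{i,j}$ over all label pairs — rather than a pair depending on $l$ — gives a uniform (if slightly loose) bound valid for every conditioning label simultaneously.

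With the MGF bound for $M(o, \bar t)$ in hand, the concentration statement $M(o,\bar t)_{|\ell(o)=l} \in \mu_{\bar t}(o, l) \pm x\sigma_{\bar t}(o)$ with probability $1 - 2e^{-x^2/2}$ is the standard two-sided sub-Gaussian tail bound via Chernoff and optimization over $\lambda$. For the misclassification bound: conditioned on $\ell(o) = l$, the estimator $\hat\ell_{\rm wmp}(o) = \argmin_{l'}|M(o,\bar t) - \mu_{\bar t}(o, l')|$ errs only if $M(o,\bar t)$ is closer to some $\mu_{\bar t}(o, l')$ with $l' \neq l$ than to $\mu_{\bar t}(o, l)$, which forces $|M(o,\bar t) - \mu_{\bar t}(o,l)| \geq \frac{1}{2}\min_{i \neq j}|\mu_{\bar t}(o,i) - \mu_{\bar t}(o,j)|$ by the triangle inequality. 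Taking $x\sigma_{\bar t}(o) = \frac12\min_{i,j}|\mu_{\bar t}(o,i)-\mu_{\bar t}(o,j)|$ in the tail bound yields the claimed $\exp(-\min_{i,j}|\mu_{\bar t}(o,i)-\mu_{\bar t}(o,j)|^2 / (8\sigma^2_{\bar t}(o)))$ (absorbing the factor $2$ into constants, or noting the argument only needs the one-sided deviation toward the nearest competing mean). The main obstacle I anticipate is the bookkeeping in the inductive step — ensuring the Hoeffding variance proxy, the $\theta$ and $\theta^2$ scalings, and the sum over children all line up with the stated recursions, and in particular justifying that a single $\max_{i,j\in[k]}$ bound (not tailored to the conditioning label) is what propagates cleanly; this is routine but must be done with care to keep the recursion self-consistent across all $k$ conditioning labels at once.
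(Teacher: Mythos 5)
Your proposal follows essentially the same route as the paper's proof: induction on depth for the sub-Gaussian MGF bound, factorization over children by conditional independence, Hoeffding's lemma applied to the mixture over the child's label (producing exactly the stated $\mu_t,\sigma_t^2$ recursions), then Chernoff with $x=\min_{i,j}|\mu_{\bar t}(o,i)-\mu_{\bar t}(o,j)|/(2\sigma_{\bar t}(o))$ for the misclassification bound. The minor factor-of-two looseness in the final tail bound that you flag is present in the paper's own argument as well, so there is no gap relative to it.
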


{\begin{remark}
	Unlike the $k=2$ case, in general it is hard to quantitatively analyze this evolution system for $\mu_t(u), \sigma_t^2(u)$. The main difficulty stems from the fact that the coordinates that attain the maximum of
	$\max_{i,j\in [k]} | \mu_{t-1}(u,i) - \mu_{t-1}(u,j) |$ vary with $u, t$. Hence, it is challenging to provide sharp bounds on $\sigma_t^2(u)$. In some sense, the difficulty is introduced by the instability of the relative ordering of the coordinates of the vector $\mu_t(u)$ for an arbitrary initialization.


	As will be shown in the next section, one can resolve this problem by initializing $\mu_0(u, l), l\in [k]$ in a ``most informative'' way. This initialization represents the additional weights on label's types beyond the weights given by the minimum energy flow.
\end{remark}

\subsection{Additional Weighting via Eigenvectors}

We show in this section that the vector evolution system with noisy initialization is indeed tractable if we weigh the label's type according to the second right eigenvector of $K \in \mathbb{R}^{k \times k}$.

\begin{theorem}[Weighting by Eigenvector]
	\label{thm:weight-eigen}
	Assume that the second eigenvalue $\theta = \lambda_2(K)$ of the Markov transition kernel $K$ is a real, and denote the associated second right eigenvector by $w \in \mathbb{R}^k, \| w \| = 1, w^T \mathbf{1} = 0$. Denote the minimum energy flow on tree $T(o)$ with conductance level $\theta^2$ by $\mathbf{i}^*$. In the case of noisy label information with parameter $\delta$, if we initialize
	$$\mu_0(u, l) = \delta \cdot \theta^{-2|u|} \mathbf{i}^*(\rightsquigarrow u) \cdot w_{l}, ~\text{for}~ l \in [k],$$
	and $\sigma_0^2(u) = \left( \theta^{-2|u|} \mathbf{i}^*(\rightsquigarrow u) \right)^2 \cdot \max_{i,j \in [k]} |w_i - w_j|^2$, then the worst case misclassification error is upper bounded by
	\begin{align*}
		\limsup_{t\rightarrow \infty} ~ \max_{i,j \in [k], i\neq j} \mathbb{P}( \hat{\ell}_{\rm wmp}(o) = i | \ell(o) = j ) \leq \exp( - \frac{R^2}{2 \mathbf{E}^*(\theta^{-2})})
	\end{align*}
	with $R = \frac{\min_{i,j} |w_i - w_j|}{\max_{i,j} |w_i - w_j|}$.
\end{theorem}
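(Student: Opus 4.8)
The plan is to reduce the general-$k$ vector evolution to the scalar $k=2$ analysis by exploiting that $w$ is an eigenvector of $K$. First I would observe that with the proposed initialization $\mu_0(u) = \delta\,\theta^{-2|u|}\mathbf{i}^*(\rightsquigarrow u)\,w$, the vector $\mu_0(u)$ is a scalar multiple of the fixed vector $w$. Since $Kw = \theta w$, the recursion $\mu_t(v) = \sum_{u \in \C(v)} \theta K \mu_{t-1}(u)$ preserves this structure: by induction $\mu_t(v) = c_t(v)\, w$ for a scalar $c_t(v)$, where $c_t(v) = \theta^2 \sum_{u \in \C(v)} c_{t-1}(u)$. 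Thus all the difficulty flagged in the remark — the instability of which coordinates attain $\max_{i,j}|\mu_t(u,i)-\mu_t(u,j)|$ — disappears, because that maximum is simply $|c_t(u)|\cdot \max_{i,j}|w_i-w_j|$, always realized by the same pair of coordinates. This is the key step, and it is exactly why the eigenvector initialization is the ``most informative'' choice.

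Next I would unwind the scalar recursion and identify it with a flow. Writing $g_t(v) := c_{t-|v|}(v)\cdot\theta^{-2|v|}$ relative to the root's value, one checks that $\mathbf{i}(\rightsquigarrow v) := g_t(v)/g_t(o)$ satisfies the unit-flow conservation law of Definition~\ref{def:unit_flow}, since $c_{t-|v|}(v) = \theta^2\sum_{u\in\C(v)} c_{t-|v|-1}(u)$ translates to $\mathbf{i}(\rightsquigarrow v) = \sum_{u\in\C(v)}\mathbf{i}(\rightsquigarrow u)$; the initialization $\mu_0(u)\propto \mathbf{i}^*(\rightsquigarrow u)$ makes this flow equal to the minimum energy flow $\mathbf{i}^*$ at conductance level $\theta^2$ (as in Theorem~\ref{thm:energy-error}). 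Then I would run the $\sigma^2_t$ recursion: plugging $\max_{i,j}|\mu_{t-1}(u,i)-\mu_{t-1}(u,j)|^2 = c_{t-1}(u)^2\max_{i,j}|w_i-w_j|^2$ into the update and normalizing by $\big(c_t(o)\max_{i,j}|w_i-w_j|/2\big)^2$ gives, by the same computation as in Theorem~\ref{thm:energy-error}, that $\sigma^2_t(o)$ divided by that normalizer converges to $\mathbf{E}_t(\mathbf{i}^*,\theta^{-2}) \to \mathbf{E}^*(\theta^{-2})$, finite precisely because ${\rm br}[T(o)]\,\theta^2 \ge \lambda\theta^2 > 1$.

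Finally I would feed these two facts into Lemma~\ref{lem:concentration.k}. For any $i \ne j$, the separation in the means is $|\mu_{\bar t}(o,i) - \mu_{\bar t}(o,j)| = |c_{\bar t}(o)|\,|w_i - w_j|$, so
\begin{align*}
\frac{\min_{i,j}|\mu_{\bar t}(o,i)-\mu_{\bar t}(o,j)|^2}{8\,\sigma^2_{\bar t}(o)}
= \frac{|c_{\bar t}(o)|^2\min_{i,j}|w_i-w_j|^2}{8\,\sigma^2_{\bar t}(o)}
= \frac{\min_{i,j}|w_i-w_j|^2}{2\max_{i,j}|w_i-w_j|^2}\cdot\frac{(1+o_{\bar t}(1))}{\mathbf{E}^*(\theta^{-2})},
\end{align*}
which is $\tfrac{R^2}{2\mathbf{E}^*(\theta^{-2})}(1+o_{\bar t}(1))$. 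Lemma~\ref{lem:concentration.k} then bounds $\mathbb{P}(\hat\ell_{\rm wmp}(o)=i\mid\ell(o)=j)$ by $\exp\!\big(-\tfrac{R^2}{2\mathbf{E}^*(\theta^{-2})}(1+o_{\bar t}(1))\big)$; taking $\limsup_{t\to\infty}$ yields the claim. One point deserving care, and the place I expect the main technical friction, is verifying that the noisy-information initialization actually satisfies the sub-Gaussian MGF hypothesis of Lemma~\ref{lem:concentration.k} with the stated $\mu_0(u,l)$ and $\sigma^2_0(u)$: $M(u,0) = \theta^{-2|u|}\mathbf{i}^*(\rightsquigarrow u)\,w_{\tilde\ell_{\rm prior}(u)}$ is a bounded random variable taking value $w_l$ (scaled) with the correct conditional probability $\tfrac{1-\delta}{k}+\delta$ on the true label and $\tfrac{1-\delta}{k}$ elsewhere, so its conditional mean is $\delta\,\theta^{-2|u|}\mathbf{i}^*(\rightsquigarrow u)\,w_l$ (using $w^T\mathbf 1 = 0$) and its range is controlled by $\max_{i,j}|w_i-w_j|$; Hoeffding's lemma then gives the sub-Gaussian bound with the claimed variance proxy. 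The rest is the bookkeeping of propagating these bounds through the linear recursion, which Lemma~\ref{lem:concentration.k} already packages.
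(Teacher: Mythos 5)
Your proposal is correct and follows essentially the same route as the paper's proof: the eigenvector initialization keeps $\mu_{t-|v|}(v)$ proportional to $w$ with scalar coefficient $\delta\,\theta^{-2|v|}\mathbf{i}^*(\rightsquigarrow v)$ (verified by induction using $Kw=\theta w$ and flow conservation), the $\sigma^2_t$ recursion then collapses to the energy $\mathbf{E}(\mathbf{i}^*,\theta^{-2})$ scaled by $\max_{i,j}|w_i-w_j|^2$, and Lemma~\ref{lem:concentration.k} delivers the $\exp(-R^2/2\mathbf{E}^*(\theta^{-2}))$ bound. Your explicit verification of the sub-Gaussian MGF hypothesis for the noisy initialization (conditional mean via $w^T\mathbf{1}=0$, Hoeffding for the range) is exactly what the paper does at the start of its proof, so there is no substantive difference.
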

\begin{remark}
	Observe that the upper bound becomes trivial when $\min_{i,j} |w_i - w_j| = 0$. In this case, one can easily modify in the proof of Theorem~\ref{thm:weight-eigen} so that the following non-trivial guarantee for set misclassification error holds. Assume $w$ has $m$ distinct values, and denote the set $S_i, 1\leq i \leq m$ to be the distinct value sets associated with $w$. Then one has the following upper bound on the set misclassification error
	\begin{align}
		\limsup_{t\rightarrow \infty} ~\max_{i,j \in [m], i\neq j} \mathbb{P}(\hat{\ell}_{\rm wmp}(o) \in S_i | \ell(o) \in S_j ) \leq \exp( - \frac{R_{\rm S}^2}{2 \mathbf{E}^*(\theta^{-2})})
	\end{align}
	with $R_{\rm S} = \frac{\min_{i,j} |w_{S_i} - w_{S_j}|}{\max_{i,j} |w_{S_i} - w_{S_j}|}$.
\end{remark}

\subsection{Lower Bound: Sharp Threshold}

In this section we provide a new lower bound analysis through bounding the $\chi^2$ distance to the ``average measure''. The lower bound shows that the transition boundary $\lambda \theta^2 = 1$ achieved by WMP is sharp for any $k$. To the best of our knowledge, the first lower bound for general $k$ case is achieved in \cite{janson2004robust} using a notion of weighted $\chi^2$ distance. For completeness of the presentation, we provide here a different proof using the usual $\chi^2$ distance. In addition, our approach admits a clear connection to the upper bound analysis through matrix power iterations.

\begin{theorem}[Limitation for Local Algorithms, $k$-communities]
	\label{thm:KL-lower.bound}
	Recall the label broadcasting process with Markov transition kernel $K$ on tree $T_t(o)$. Assume $K \in \mathbb{R}^{k \times k}$ is symmetric. Consider the case when noisy label information (with parameter $\delta$) is known on the depth-$t$ layer leaf nodes.
	Under the condition $$
	{\rm br}[T(o)] \theta^2<1$$ and $k \delta^2(\frac{1}{\delta + \frac{1-\delta}{k}} + \frac{1}{\frac{1-\delta}{k}} ) < 1 - {\rm br}[T(o)] \theta^2$, we have
	$$
	\liminf_{t \rightarrow \infty} ~\inf_{\sigma_t} \max_{l \in [k]} ~\mathbb{P}( \sigma_t(o) \neq \ell(o) | \ell(o) = l ) \geq \frac{1}{2} (1 - \frac{1}{k}).
	$$
	where~ $\sigma_t(o): \tilde{\ell}_{\rm prior}(\C^t(o)) \rightarrow [k]$ is any estimator mapping the prior labels on leaves in the local tree to a decision. The above inequality also implies
	$$
	\liminf_{t \rightarrow \infty} ~\inf_{\sigma_t} \max_{i,j \in [k], i\neq j} ~\mathbb{P}( \sigma_t(o) = i | \ell(o) = j ) \geq \frac{1}{2k}.
	$$
\end{theorem}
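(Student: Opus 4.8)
The plan is to lower-bound the Bayes risk of the $k$-ary decision problem by an information quantity — the $\chi^{2}$ divergence of each conditional leaf-label law from their average — and then to bound that quantity by running a second-moment recursion down the tree. Write $\pi^{l}$ for the law of $\tilde{\ell}_{\rm prior}(\C^{t}(o))$ conditioned on $\ell(o)=l$, and $\bar\pi:=\frac1k\sum_{l\in[k]}\pi^{l}$ for the ``average measure.'' Since $\max_{l}\mbb{P}(\sigma_t(o)\neq\ell(o)\mid\ell(o)=l)\geq\frac1k\sum_{l}\mbb{P}(\sigma_t(o)\neq\ell(o)\mid\ell(o)=l)$ and the average risk under the uniform prior on $\ell(o)$ is minimized by maximum likelihood, for every $\sigma_t$
\[
\max_{l}\mbb{P}(\sigma_t(o)\neq\ell(o)\mid\ell(o)=l)\ \geq\ 1-\frac1k\int\max_{l}\frac{d\pi^{l}}{d\bar\pi}\,d\bar\pi\ \geq\ \Big(1-\tfrac1k\Big)-\tfrac1k\Big(\textstyle\sum_{l\in[k]}\chi^{2}(\pi^{l}\,\|\,\bar\pi)\Big)^{1/2},
\]
where the last step uses $\max_{l}\tfrac{d\pi^{l}}{d\bar\pi}\leq 1+\big(\sum_{l}(\tfrac{d\pi^{l}}{d\bar\pi}-1)^{2}\big)^{1/2}$ and Cauchy--Schwarz. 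Hence it suffices to show $\limsup_{t\to\infty}\sum_{l}\chi^{2}(\pi^{l}\,\|\,\bar\pi)\leq\big(\tfrac{k-1}{2}\big)^{2}$, which gives exactly $\liminf_t\inf_{\sigma_t}\max_l\mbb{P}(\mathrm{err}\mid l)\geq\tfrac12(1-\tfrac1k)$.

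Next I would reduce the $\chi^{2}$ to the average to pairwise divergences using convexity of $x\mapsto 1/x$: pointwise $1/\bar\pi\leq\frac1k\sum_{m}1/\pi^{m}$, so $\chi^{2}(\pi^{l}\,\|\,\bar\pi)\leq\frac1k\sum_{m}\chi^{2}(\pi^{l}\,\|\,\pi^{m})$. The pairwise divergences factorize along the tree: conditioning on $\ell(o)$ makes the leaves in the subtrees hanging off the children of $o$ independent, so writing $\nu^{j}_{c}$ for the leaf law of the subtree at $c$ given $\ell(c)=j$ and $\mu^{l}_{c}:=\sum_{j}K_{lj}\nu^{j}_{c}$, one gets $1+\chi^{2}(\pi^{l}\,\|\,\pi^{m})=\prod_{c\in\C(o)}\big(1+\chi^{2}(\mu^{l}_{c}\,\|\,\mu^{m}_{c})\big)$. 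Applying the $1/x$-convexity bound once more to the mixture in the denominator of each factor, the recursion closes on the pairwise overlaps of the depth-$(t-1)$ subtrees, combined through $K$. Tracking this matrix of second moments down the tree is precisely a ``$K$-iteration'', mirroring the WMP update $\mu_{t}(v)=\sum_{u}\theta K\mu_{t-1}(u)$ on the upper-bound side; diagonalizing the symmetric $K$ and projecting off the all-ones eigenvector exposes the factor $\lambda_{i}(K)^{2}\leq\theta^{2}$ gained per level, while the off-diagonal nonlinear terms stay subdominant as long as the second moments remain below an explicit threshold.

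To convert a per-level contraction $\theta^{2}$ on a possibly irregular tree into global convergence, I would use the branching-number/cutset estimate from \cite{pemantle1999robust}: for every $\epsilon>0$ there is a cutset $\Pi$ with $\sum_{v\in\Pi}({\rm br}[T(o)]+\epsilon)^{-|v|}$ arbitrarily small; pushing the second-moment bound down to $\Pi$ shows that the accumulated $\chi^{2}$ is at most a (leaf base term)$\,\times\sum_{v}({\rm br}[T(o)]\theta^{2})^{|v|}$-type quantity, finite exactly when ${\rm br}[T(o)]\theta^{2}<1$. The leaf base term is the $\chi^{2}$ of the noise channel, governed by the constants $\delta^{2}/(\delta+\tfrac{1-\delta}{k})$ and $\delta^{2}/(\tfrac{1-\delta}{k})$; the hypothesis $k\delta^{2}\big(\tfrac{1}{\delta+(1-\delta)/k}+\tfrac{1}{(1-\delta)/k}\big)<1-{\rm br}[T(o)]\theta^{2}$ makes this base term small enough and keeps all second moments below the threshold needed to discard the nonlinear corrections, yielding $\limsup_{t\to\infty}\sum_{l}\chi^{2}(\pi^{l}\,\|\,\bar\pi)<\big(\tfrac{k-1}{2}\big)^{2}$. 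The pairwise consequence is then immediate: $\mbb{P}(\sigma_t(o)\neq\ell(o)\mid\ell(o)=l)=\sum_{i\neq l}\mbb{P}(\sigma_t(o)=i\mid\ell(o)=l)\leq(k-1)\max_{i\neq j}\mbb{P}(\sigma_t(o)=i\mid\ell(o)=j)$, so dividing the first bound by $k-1$ gives $\liminf_t\inf_{\sigma_t}\max_{i\neq j}\mbb{P}(\sigma_t(o)=i\mid\ell(o)=j)\geq\tfrac1{2k}$.

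I expect the crux to be this second-moment recursion on a general irregular tree: combining the Pemantle--Peres cutset bound with a uniform, path-by-path control of the nonlinear cross terms so that the matrix iteration genuinely contracts whenever ${\rm br}[T(o)]\theta^{2}<1$. That is where the precise $\delta$-dependent hypothesis is consumed, and it is the main departure from the $k=2$ case, in which the one-dimensional recursion (and the bound via $d_{\rm TV}$) can be handled directly.
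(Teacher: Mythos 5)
Your overall skeleton---bounding the minimax error by the $\chi^2$ divergences of the conditional leaf laws from an average measure, contracting by $\theta^2$ per level, and globalizing via the Pemantle--Peres cutset lemma with the $\delta$-dependent hypothesis feeding the base case---matches the paper's proof, and your hand-derived reduction from testing error to $\sum_l d_{\chi^2}(\pi^l \| \bar\pi)$ is a legitimate substitute for the paper's appeal to Tsybakov's multiple-testing bound. The gap is in the step you yourself flag as the crux: the recursion. By applying $1/\bar\pi \le \frac1k\sum_m 1/\pi^m$ at the outset and passing to pairwise divergences $d_{\chi^2}(\pi^l\|\pi^m)$, you discard exactly the two structural facts that produce a $\theta^2$-per-level contraction. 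After factorizing over children and applying the $1/x$-convexity bound once more, each factor expands into integrals of the form $\int \nu_c^a\nu_c^b/\nu_c^j$ with three distinct indices and a denominator that varies with $j$; these are not pairwise $\chi^2$ divergences, so the recursion does not close on your ``matrix of pairwise second moments.'' If you patch this with Cauchy--Schwarz, $\int(\nu^a-\nu^j)(\nu^b-\nu^j)/\nu^j \le \sqrt{d_{\chi^2}(\nu^a\|\nu^j)\,d_{\chi^2}(\nu^b\|\nu^j)}$, the per-level factor you can extract is an $\ell_1$/Dobrushin-type coefficient built from $\sum_a |K_{la}-K_{ma}|$, not the spectral quantity $\theta^2=\lambda_2(K)^2$; in general the former is strictly larger, so this route does not reach the sharp threshold ${\rm br}[T(o)]\,\theta^2<1$. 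The proposed ``diagonalize $K$ and project off the all-ones eigenvector'' step has no home in the pairwise formulation: once the reference density in the denominator changes from factor to factor, there is no fixed inner product in which the deviations are orthogonal to $\mathbf{1}$ and in which symmetry of $K$ yields the eigenvalue bound.

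The fix is what the paper does: keep the average measure in the denominator throughout the recursion. Since $K$ is symmetric, $w=\frac1k\mathbf{1}$ satisfies $w^TK=w^T$, so $\bar\pi_u=\frac1k\sum_j \pi_u(j)$ factorizes over the children of $u$ in the same form, and for each child $v$ the vector $\pi_v(\cdot)-\bar\pi_v\mathbf{1}$ is pointwise orthogonal to $\mathbf{1}$; symmetry of $K$ then gives $\| K(\pi_v(\cdot)-\bar\pi_v\mathbf{1})\|^2 \le \theta^2 \|\pi_v(\cdot)-\bar\pi_v\mathbf{1}\|^2$, which is the precise source of the contraction $\log\bigl[1+\sum_i d_{\chi^2}(\pi_u(i)\|\bar\pi_u)\bigr] \le \theta^2\sum_{v\in\C(u)}\sum_i d_{\chi^2}(\pi_v(i)\|\bar\pi_v)$. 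From there, your cutset/induction plan and the role of $\eta=k\delta^2\bigl(\tfrac{1}{\delta+(1-\delta)/k}+\tfrac{1}{(1-\delta)/k}\bigr)$ (the base case at the cutset together with the linearization condition $\theta^2\lambda\le 1/(1+\eta)$) go through essentially as you describe, and in fact yield $\sum_l d_{\chi^2}(\pi^l\|\bar\pi)\to 0$, stronger than the bound $\bigl(\tfrac{k-1}{2}\bigr)^2$ you asked for; your final conversions to the $\tfrac12(1-\tfrac1k)$ and $\tfrac1{2k}$ statements are correct.
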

The above result shows that even belief propagation suffers the error at least $\frac{1}{2k}$ in distinguishing $i, j$, which is within a factor of $2$ from random guess. We remark in addition that the condition $k \delta^2(\frac{1}{\delta + \frac{1-\delta}{k}} + \frac{1}{\frac{1-\delta}{k}} ) < 1 - {\rm br}[T(o)] \theta^2$ can be satisfied when $\delta$ is small.

\subsection{Further Discussion}

\paragraph{Local versus Global Algorithms}

In the balanced case with $k$ equal size communities, and $p,q$ denoting the within- and between-community connection probabilities, the Kesten-Stigum threshold for local algorithm class takes the following expression
$$
{\sf SNR} := \frac{n(p-q)^2}{k^2(q + \frac{p-q}{k})} = 1.
$$
However, it is known that the limitation for global algorithm class for growing number of communities is ${\sf SNR} \asymp \mathcal{O}(\frac{\log k}{k})$ (\cite{abbe2015detection}, weak consistency) and ${\sf SNR} \asymp \mathcal{O}(\frac{\log n}{k})$ (\cite{chen2014statistical}, strong consistency). Therefore, as $k$ grows, there is an interesting gap between local and global algorithms in terms of ${\sf SNR}$. An interesting direction is to determine whether one can solve the problem down to the information-theoretic threshold $\mathcal{O}^*(\frac{1}{k})$ with computationally efficient algorithms.


\section{Numerical Studies}
\label{sec:num.study}
We apply the message passing Algorithm~\ref{alg:wmp} to the political blog dataset \cite{adamic2005political} (with a total of 1222 nodes) in the partial label information setting with $\delta$ portion randomly revealed labels. In the literature, the state-of-the-art result for a global algorithm appears in \cite{jin2015fast}, where the misclassification rate is $58/1222 = 4.75\%$. Here we run a weaker version of our WMP algorithm as it is much easier to implement and does not require parameter tuning. Specifically, we initialize the message with a uniform flow on leaves (minimum energy flow that corresponds to a regular tree). We will call this algorithm approximate message passing (AMP) within this section.  

We run AMP with three different settings $\delta = 0.1, 0.05, 0.025$, repeating each experiment $50$ times. As a benchmark, we compare the results to the spectral algorithm on the $(1-\delta)n$ sub-network. We focus on the local tree with depth 1 to 5, and output the error for message passing with each depth.  The results are summarized as box-plots in Figure~\ref{fig:polblg}. The left figure illustrates the comparison of AMP with depth 1 to 5 and the spectral algorithm, with red, green, blue  boxes corresponding to $\delta = 0.025, 0.05, 0.1$, respectively. The right figure zooms in on the left plot with only AMP depth 2 to 4 and spectral, to better emphasize the difference.
Remark that if we only look at depth 1, some of the nodes may have no revealed neighbors. In this setting, we classify this node as wrong (this explains why depth-1 error can be larger than 1/2).
\begin{figure}[pht]
	\begin{subfigure}
		{0.49\linewidth} \centering
		\includegraphics[width=\textwidth]{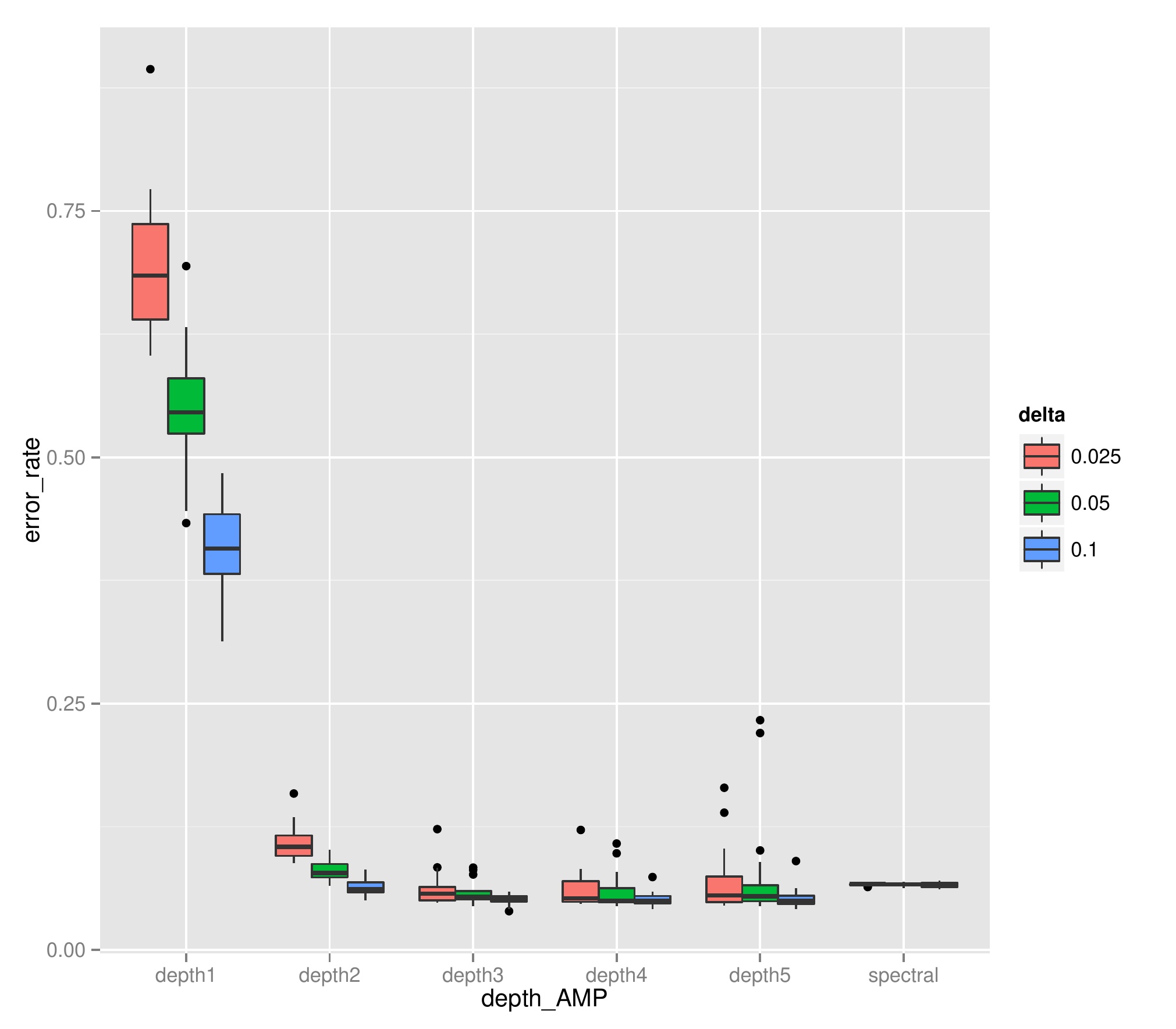}
	\end{subfigure}
	\begin{subfigure}
		{0.49\linewidth} \centering
		\includegraphics[width=\textwidth]{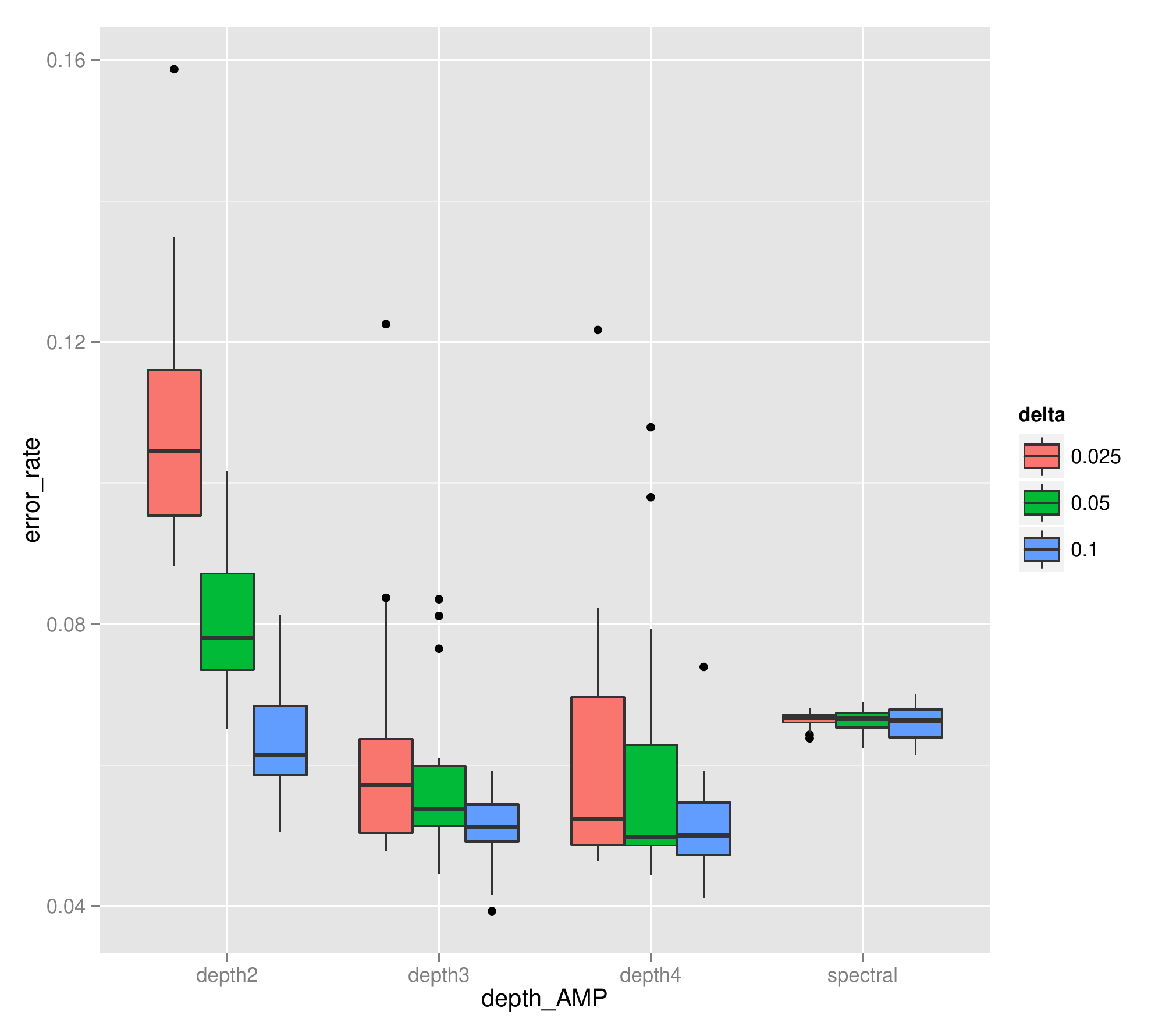}
	\end{subfigure}
	\caption{AMP algorithm on Political Blog Dataset.}
	\label{fig:polblg}
\end{figure}

We present in this paragraph some of the statistics of the experiments, extracted from the above Figure~\ref{fig:polblg}. In the case $\delta = 0.1$,  from depth 2-4, the AMP algorithm produces the mis-classification error rate (we took the median over the experiments for robustness) of $6.31\%, 5.22\%, 5.01\%$, while the spectral algorithm produces the error rate $6.68\%$. When $\delta = 0.05$, i.e. about 60 node labels revealed, the error rates are $7.71\%, 5.44\%, 5.08\%$ with depth 2 to 4, contrasted to the spectral algorithm error $6.66\%$. In a more extreme case $\delta = 0.025$ when there are only $\sim30$ node labels revealed, AMP depth 2-4 has error $10.20\%, 5.71\%, 5.66\%$, while spectral is $6.63\%$. In general, the AMP algorithm with depth 3-4 uniformly beats the vanilla spectral algorithm. Note that our AMP algorithm is a distributed decentralized algorithm that can be run in parallel. We acknowledge that the error $\sim 5\%$ (when $\delta$ is very small) is still slightly worse than the state-of-the-art degree-corrected SCORE algorithm in \cite{jin2015fast}, which is $4.75\%$.

\section{Technical Proofs}
\label{sec:proof}

We will start with two useful results. The first one is a coupling proposition.
The proof follows exactly the same idea as in Proposition 4.2 in \cite{mossel2012stochastic}. The intuition is that
when the depth of the tree is shallow, the SBM in the sparse regime can be coupled to a Galton-Watson tree with Poisson branching (as there are
many nodes outside the radius $R$ for the Poisson-Multinomial coupling, when $R$ small). We want to prove a more general version
for SBM with unequal size communities. The proof is delayed to Appendix~\ref{sec:prop.lma}.

\begin{proposition}
	\label{lma:coupling.tree}
  Let $R = R(n) = \lfloor \frac{1}{4 \log [2 np_0 + 2 \log n]} \log n \rfloor$, where $p_0 = \max_{i,j} Q_{ij}$. 
Denote $(T, \sigma_{T})$ to be the multi-type Galton-Watson tree (with Poisson branching) with mean matrix $Q {\rm diag}(N)$ and label transition kernel $K = \left[ {\rm diag}(QN) \right]^{-1}  Q {\rm diag}(N)$. Denote $G_R$ as the neighborhood of depth up to $R$ induced by the graph $G$, for a particular node. 
There exists a coupling between
  $(G_R, \ell_{G_R})$ and $(T, \sigma_{T})$ such that $(G_R, \ell_{G_R}) = (T_R, \sigma_{T_R})$
  with high probability as $n \rightarrow \infty$. Here the tree equivalence is up to a label
  preserving homomorphism.
\end{proposition}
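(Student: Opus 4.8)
The plan is to follow the exploration-process coupling strategy of Proposition 4.2 in \cite{mossel2012stochastic}, adapted to the heterogeneous setting with unequal community sizes. I would fix a node $o \in V$ and grow its neighborhood in $G$ breadth-first, layer by layer, up to depth $R$; simultaneously I would grow the multi-type Galton-Watson tree $(T,\sigma_T)$ by the same breadth-first schedule, and construct a coupling of the two processes on a common probability space so that they agree (as labeled rooted trees, up to a label-preserving isomorphism) until the first time a ``bad event'' occurs. The three bad events to control are: (i) a cycle appears in $G_R$ (two explored half-edges land on the same already-discovered vertex), so that $G_R$ fails to be a tree; (ii) the number of offspring of some explored vertex of type $i$ having type $j$ in $G$ deviates from the ${\sf Poisson}(n_j Q_{ij})$ law used in $T$; and (iii) the conditional label of a newly discovered child, given its parent's label, deviates from the kernel $K_{ij} = n_j Q_{ij}/\sum_{j'} n_{j'} Q_{ij'}$. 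The key bookkeeping fact, exactly as in the vanilla case, is that since $R$ is chosen so small that the total number of vertices discovered by depth $R$ is $o(n)$ with high probability (using $n p_0 \precsim n^{o(1)}$ and the definition $R = \lfloor \frac{1}{4\log[2np_0+2\log n]}\log n\rfloor$), at every step there remain $n - o(n)$ undiscovered vertices in each community, so the hypergeometric/binomial offspring counts are close to Poisson and the cycle probability is small.

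In more detail, the steps I would carry out are: First, set up the exploration queue and the coupling: when exploring a vertex $v$ of (revealed) type $i$, the number of its neighbors in community $j$ among the not-yet-discovered vertices is ${\sf Binomial}(n_j - (\text{already used}), Q_{ij})$; couple this with a ${\sf Poisson}(n_j Q_{ij})$ via the standard Poisson approximation (e.g.\ Le Cam / Stein–Chen), incurring total-variation cost $O(n_j Q_{ij}^2) + O((\text{discovered so far})\cdot Q_{ij})$ at that step. Second, handle the cycle event: the probability that a given explored half-edge connects to an already-discovered vertex is at most $(\text{discovered so far}) \cdot p_0$, and summing over all $\le O((np_0)^R) = n^{o(1)}$ half-edges discovered gives a bound that tends to $0$; this is where I would also note that conditioning on ``no cycle so far'' the remaining graph is still distributed as fresh independent edges, preserving the product structure. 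Third, handle the label coupling: in $G$ the latent labels $\ell(\cdot)$ are fixed by the planted partition, and a discovered child $u$ of a type-$i$ parent is, by construction of SBM, a uniformly random not-yet-used vertex in community $j$ with probability proportional to $n_j Q_{ij}$; matching this to the kernel $K$ is exact up to the same ``already discovered'' correction. Fourth, union-bound all the per-step errors over the $n^{o(1)}$ vertices and half-edges explored by depth $R$ to conclude that the coupling succeeds with probability $1 - o(1)$.

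The main obstacle, and the only place that genuinely differs from \cite{mossel2012stochastic}, is the heterogeneity: one must carry the community-index $j$ through all the counting estimates and verify that the Poisson-approximation and cycle bounds degrade only by the harmless factor $p_0 = \max_{i,j} Q_{ij}$ (rather than the minimum or average), and that the choice of $R$ in terms of $p_0$ is exactly what makes $(\text{vertices discovered by depth } R)\cdot p_0 = n^{o(1)} \cdot p_0 = o(1)$ when $np_0 \precsim n^{o(1)}$. Concretely: the expected total progeny by depth $R$ in $T$ is at most $\lambda^R$ with $\lambda = \lambda_1(Q\,{\rm diag}(N)) \le n p_0$, so it is at most $(np_0)^R \le \exp(R\log(2np_0+2\log n)) \le n^{1/4}$, and the cycle/Poisson error is this quantity times $p_0 \le n^{o(1)}/n \cdot n^{1/4} = o(1)$. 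Once this accounting is in place, the remaining arguments are verbatim the ones in \cite{mossel2012stochastic}, which is why the paper defers the full details to Appendix~\ref{sec:prop.lma}.
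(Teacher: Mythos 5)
Your proposal follows essentially the same route as the paper's own proof: a breadth-first exploration coupled layer by layer to the multi-type Galton--Watson tree, with the same three bad events (cycles, Binomial--Poisson discrepancy via a Le Cam-type total-variation bound, and the depletion correction from already-discovered vertices), the same growth bound $|\partial G_r|\le (2np_0+2\log n)^r\le n^{1/4}$ ensured by the choice of $R$, and the same final union bound. The only cosmetic difference is that the paper controls the layer sizes with a Bernstein inequality plus union bound (rather than an expected-progeny bound), which is exactly the high-probability upgrade you allude to, so your argument is correct as stated.
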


\begin{lemma}[Hoeffding's Inequality]
	\label{lma:hoeff}
	Let X be any real-valued random variable with expected value $\mathbb{E}X = 0$ and such that $a \leq X \leq b$ almost surely. Then, for all $\lambda>0$,
	$$
	\mathbb{E} \left[ e^{\lambda X} \right] \leq \exp \left( \frac{\lambda^2 (b - a)^2}{8} \right).
	$$
\end{lemma}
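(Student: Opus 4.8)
The plan is to use the classical convexity argument. First I would observe that $\mathbb{E}X = 0$ together with $a \le X \le b$ almost surely forces $a \le 0 \le b$; the degenerate case $a = b$ is trivial, so assume $a < b$. By convexity of $x \mapsto e^{\lambda x}$, every $x \in [a,b]$ satisfies $x = \frac{b-x}{b-a}a + \frac{x-a}{b-a}b$, hence
\begin{align*}
e^{\lambda x} \le \frac{b-x}{b-a}\,e^{\lambda a} + \frac{x-a}{b-a}\,e^{\lambda b}.
\end{align*}
Substituting $X$ for $x$ and taking expectations, the term linear in $X$ vanishes because $\mathbb{E}X = 0$, leaving
\begin{align*}
\mathbb{E}\left[e^{\lambda X}\right] \le \frac{b}{b-a}\,e^{\lambda a} - \frac{a}{b-a}\,e^{\lambda b}.
\end{align*}

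Next I would reparametrize: set $p := \frac{-a}{b-a} \in [0,1]$ and $h := \lambda(b-a) \ge 0$. A short computation rewrites the right-hand side as $e^{\phi(h)}$ with
\begin{align*}
\phi(h) := -ph + \log\!\left(1 - p + p e^{h}\right).
\end{align*}
So it suffices to show $\phi(h) \le h^2/8$ for all $h \ge 0$, since then $\mathbb{E}[e^{\lambda X}] \le e^{h^2/8} = \exp\!\left(\lambda^2 (b-a)^2 / 8\right)$, which is the claim.

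To bound $\phi$, I would apply Taylor's theorem with Lagrange remainder at $0$. One has $\phi(0) = 0$ and $\phi'(h) = -p + \frac{p e^{h}}{1 - p + p e^{h}}$, so $\phi'(0) = 0$; differentiating again,
\begin{align*}
\phi''(h) = \frac{p(1-p) e^{h}}{\left(1 - p + p e^{h}\right)^{2}} = u(1-u), \qquad u := \frac{p e^{h}}{1 - p + p e^{h}} \in [0,1].
\end{align*}
Since $u(1-u) \le \tfrac14$ for every $u \in [0,1]$, we get $\phi''(h) \le \tfrac14$ uniformly, and Taylor's theorem yields $\phi(h) = \phi(0) + h\phi'(0) + \frac{h^2}{2}\phi''(\xi) \le h^2/8$ for some $\xi$ between $0$ and $h$, completing the proof.

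I do not expect a genuine obstacle here: the only mildly delicate point is the uniform estimate $\phi'' \le 1/4$, which after the substitution reduces to the elementary fact $u(1-u) \le 1/4$; everything else is convexity plus a one-line Taylor expansion. The sign conventions (that $a\le 0\le b$, so $p\in[0,1]$) are the one place to be slightly careful, but they follow immediately from the mean-zero hypothesis.
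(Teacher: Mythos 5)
Your proof is correct and is the canonical argument for Hoeffding's Lemma: convexity of $x\mapsto e^{\lambda x}$ to dominate the MGF by the endpoint mixture, then the reparametrization $\phi(h)=-ph+\log(1-p+pe^h)$ with $\phi''\le 1/4$ and Taylor's theorem. The paper states this lemma as a standard tool without proof, so there is nothing to compare beyond noting that your write-up (including the observation $a\le 0\le b$, hence $p\in[0,1]$) is complete and accurate.
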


\begin{proof}[Proof of Lemma~\ref{lem:k=2.concentration}]
	Recall the linearized message passing rule that ``approximates'' the Bayes optimal algorithm:
	$$M(u,t) =  \sum_{v \in \C(u)} \bar{\theta} \cdot M(v,t-1), ~~\text{where}~ \bar{\theta} = \frac{\theta_1+\theta_2}{2}.$$

	Let us analyze the behavior of the linearized messages $M(u, t)$ for a particular node $u$. The proof follows by induction on $t$. The case $t=0$ follows from the assumption about $\mu_0(u),\sigma^2_0(u)$ and Chernoff bound. Now, assume that the induction premise is true for $t-1$. Note that
	\begin{align*}
	  & \mbb{E}\left[ e^{\lambda M(u, t)} | \ell(u)=+ \right] \\
		&=  \prod_{v\in \C(u)} \mbb{E}\left[ e^{\lambda \bar{\theta} M(v, t-1)} | \ell(u)=+ \right] \\
	  &= \prod_{v\in \C(u)} \left\{ \mbb{E}\left[ e^{\lambda \bar{\theta} M(v, t-1)} | \ell(v)=+ \right] \frac{1+\theta_1}{2}
	  + \mbb{E}\left[ e^{\lambda \bar{\theta} M(v, t-1)} | \ell(v)=- \right] \frac{1-\theta_1}{2} \right\} \\
	  &\leq \prod_{v\in \C(u)} e^{ (\lambda \bar{\theta})^2 \frac{\sigma_{t-1}^2(v)}{2}} \left\{ e^{\lambda \bar{\theta} \mu_{t-1}(v,+)} \frac{1+\theta_1}{2} +
	                                       e^{\lambda \bar{\theta} \mu_{t-1}(v,-)} \frac{1-\theta_1}{2} \right\}\\
	  &\leq \prod_{v\in \C(u)} e^{ (\lambda \bar{\theta})^2 \frac{\sigma_{t-1}^2(v)}{2}}
	          e^{\lambda \bar{\theta} [\mu_{t-1}(v,+)\frac{1+\theta_1}{2} + \mu_{t-1}(v,-)\frac{1-\theta_1}{2} ]}
	          e^{(\lambda \bar{\theta})^2 \frac{ [\mu_{t-1}(v,+) - \mu_{t-1}(v,-)]^2}{8}},
	\end{align*}
	where the last step uses the Hoeffding's Lemma. Rearranging the terms, 
	\begin{align*}
	\mbb{E}\left[ e^{\lambda M(u, t)} | \ell(u)=+ \right] &\leq e^{\lambda \sum_{v \in \C(u)} \bar{\theta} \langle K_{1\cdot}, \mu_{t-1}(v) \rangle} e^{\frac{\lambda^2 \bar{\theta}^2 \sum_{v \in \C(u)} \left\{ \sigma^2_{t-1}(v) + \left[ \frac{\mu_{t-1}(v, +) - \mu_{t-1} (v, -)}{2} \right]^2  \right\}}{2} } \\
	&= e^{\lambda \mu_{t}(u, +)} e^{\frac{\lambda^2 \sigma^2_{t}(u)}{2}},
	\end{align*}
	where $K_{1\cdot}$ denotes the first row of transition matrix $K$.
	Clearly, same derivation holds with $\ell(u) = -$. Applying the Chernoff bound and optimizing over $\lambda$, one arrives at the exponential concentration bound. Induction completes.

	To upper bound the misclassification error, simply plug in the standardized absolute values of the difference, namely $x = \left| \frac{\mu_{\bar{t}}(o,+) - \mu_{\bar{t}}(o,-)}{2 \sigma_{\bar{t}}(o)} \right|$. 
\end{proof}
\begin{remark}
	Now let us propose the choice of $\mu_0(u)$ and $\sigma^2_0(u)$ for the case of noisy label information with parameter $\delta$. In WMP algorithm, choose $ M(u, 0) = c(u) \sgn(\tilde{\ell}_{\rm prior})$ with factor $c(u)$ that depends on the node $u$. Using simple Hoeffding's concentration for Bernoulli r.v., one has
	\begin{align*}
		&\mu_0(u, +) = c(u) \delta,~ \mu_0(u, -) = - c(u) \delta, \\
		&\text{and}~ \sigma^2_{0}(u) = c(u)^2.
	\end{align*}
\end{remark}

\begin{proof}[Proof of Theorem~\ref{thm:energy-error}]
	Using the result of Lemma~\ref{lem:k=2.concentration}, the proof analyzes evolution of 
	$$\frac{\sigma^2_{t}(o)}{\left[ \frac{[\mu_{t}(o, +) - \mu_{t}(o, -)]}{2} \right]^2}.$$
	 First, let us derive the expression for $\mu_{t}(o, +) - \mu_{t}(o, -)$. Denoting $w = [1, -1]^T$, it is easy to verify that $w^T K = \bar{\theta} w^T$. We have,
	\begin{align*}
		\mu_{t}(o, +) - \mu_{t}(o, -) &= \sum_{v \in \C(o)} \bar{\theta} w^T K \mu_{t-1}(v) =\sum_{v \in \C(o)} \bar{\theta}^2 w^T \mu_{t-1}(v) \\
		& = \bar{\theta}^2 \sum_{v \in \C(o)} [\mu_{t-1}(v, +) - \mu_{t-1}(v, -)].
	\end{align*}
	Using the above equation recursively, one can easily see that for any $d, 1\leq d \leq t$,
	\begin{align}
		\label{eq:mean-formula}
		\mu_{t}(o, +) - \mu_{t}(o, -) &= \bar{\theta}^{2d} \sum_{v \in \C^d(o)} [\mu_{t-d}(v, +) - \mu_{t-d}(v, -)].
	\end{align}
	Now for $\sigma^2_{t}(o)$ for $\sigma^2_t(\rho)$, one has
  \begin{align}
    \sigma_t^2(o) &= \bar{\theta}^2 \sum_{v \in \C(o)} \left\{ \sigma_{t-1}^2(v) + \left[ \frac{\mu_{t-1}(v, +) - \mu_{t-1}(v, -)}{2} \right]^2 \right\} \nonumber
	\end{align}
	which can be written, in turn, as
	\begin{align}
    &\bar{\theta}^2 \sum_{v \in \C(o)}  \bar{\theta}^2 \sum_{u \in \C(v)} \left\{ \sigma_{t-2}^2(u) + \left[ \frac{\mu_{t-2}(u, +) - \mu_{t-2}(u, -)}{2} \right]^2 \right\} \nonumber
    \\
    & \quad \quad + \bar{\theta}^2 \sum_{v \in \C(\rho)} \left[ \frac{\mu_{t-1}(v, +) - \mu_{t-1}(v, -)}{2} \right]^2 \nonumber\\
    & = \ldots \ldots +\bar{\theta}^4 \sum_{v \in \C(\rho)} \sum_{u \in \C(v)} \left[ \frac{\mu_{t-2}(u, +) - \mu_{t-2}(u, -)}{2} \right]^2 + \bar{\theta}^2 \sum_{v \in \C(o)} \left[ \frac{\mu_{t-1}(v, +) - \mu_{t-1}(v, -)}{2} \right]^2 \nonumber \\
    & = \sum_{v \in T_t(o)} \bar{\theta}^{2|v|} \left[ \frac{\mu_{t-|v|}(v, +) - \mu_{t-|v|}(v, -) }{2} \right]^2 + \sum_{u \in \C^{t}(o)}  \bar{\theta}^{2t} \sigma^2_{0}(u) \nonumber .
  \end{align}
	Using the above equation one can bound
	\begin{align}
		\frac{\sigma^2_{t}(o)}{\left[ \frac{[\mu_{t}(o, +) - \mu_{t}(o, -)]}{2} \right]^2} &= \frac{\sum_{v \in T_t(o)} \bar{\theta}^{2|v|} \left[ \frac{\mu_{t-|v|}(v, +) - \mu_{t-|v|}(v, -) }{2} \right]^2}{\left[ \frac{[\mu_{t}(o, +) - \mu_{t}(o, -)]}{2} \right]^2} + \frac{\sum_{u \in \C^{t}(o)}  \bar{\theta}^{2t} \sigma^2_{0}(u)}{\left[ \frac{[\mu_{t}(o, +) - \mu_{t}(o, -)]}{2} \right]^2} \nonumber \\
		&= \sum_{v \in T_t(o)} \frac{\bar{\theta}^{2|v|} \left[ \mu_{t-|v|}(v, +) - \mu_{t-|v|}(v, -)  \right]^2}{\left[ [\mu_{t}(o, +) - \mu_{t}(o, -)] \right]^2} + R \nonumber \\
		& = \sum_{v \in T_t(o)} \frac{ \left( \bar{\theta}^{2|v|} [\mu_{t-|v|}(v, +) - \mu_{t-|v|}(v, -)]  \right)^2}{\left( [\mu_{t}(o, +) - \mu_{t}(o, -)] \right)^2} \bar{\theta}^{-2|v|} + R \label{eq:error.energy}
	\end{align}
	where the remainder $$R = \frac{\sum_{u \in \C^{t}(o)}  \bar{\theta}^{2t} \sigma^2_{0}(u)}{\left[ \frac{[\mu_{t}(o, +) - \mu_{t}(o, -)]}{2} \right]^2}.$$
	Recall the definition of $$\mathbf{i}(\rightsquigarrow v) = \frac{  \bar{\theta}^{2|v|} [\mu_{t-|v|}(v, +) - \mu_{t-|v|}(v, -)]  }{ [\mu_{t}(o, +) - \mu_{t}(o, -)] }.$$ It is clear from Eq.\eqref{eq:mean-formula} that $\mathbf{i}$ is a valid unit flow, in the sense of Definition~\ref{def:unit_flow}.
	Continuing with Eq.~\eqref{eq:error.energy}, one has
	\begin{align}
		\inf_{\mathbf{i}}~ \frac{\sigma^2_{t}(o)}{\left[ \frac{[\mu_{t}(o, +) - \mu_{t}(o, -)]}{2} \right]^2} &\leq  \sum_{v \in T_t(o)} \mathbf{i}^*(\rightsquigarrow v)^2 \bar{\theta}^{-2|v|} + R \nonumber \\
		& = \mathbf{E}_{t}(\mathbf{i}^*, \bar{\theta}^{-2}) + R. \label{eq:min-flow-choice}
	\end{align}
	Let us now estimate $R$:
	\begin{align*}
		R &= \frac{\sum_{u \in \C^{t}(o)}  \bar{\theta}^{2t} \sigma^2_{0}(u)}{\left[ \frac{[\mu_{t}(o, +) - \mu_{t}(o, -)]}{2} \right]^2} \\
		&\leq  \sum_{u \in \C^{t}(o)} \mathbf{i}^*(\rightsquigarrow u)^2 \bar{\theta}^{-2t} \cdot  \max_{u \in \C^{t}(o)} \frac{\sigma_0^2(u)}{\left[ \frac{[\mu_{0}(u, +) - \mu_{0}(u,-)]}{2} \right]^2 } \\
		& = \sum_{u \in \C^{t}(o)} \mathbf{i}^*(\rightsquigarrow u)^2 \bar{\theta}^{-2t} \frac{1}{\delta^2}.
	\end{align*}
	The last step is because for noisy label information with parameter $\delta$, 
	$$\frac{\sigma_0^2(u)}{\left[ \frac{[\mu_{0}(u, +) - \mu_{0}(u,-)]}{2} \right]^2 } = \frac{1}{\delta^2}.$$
	In the case when $\lim_{t\rightarrow \infty} \mathbf{E}_{t}(\mathbf{i}^*, \bar{\theta}^{-2}) < \infty$, we know $ \sum_{u \in \C^{t}(o)} \mathbf{i}^*(\rightsquigarrow u)^2 \bar{\theta}^{-2t}  = \mathbf{E}_{t}(\mathbf{i}^*, \bar{\theta}^{-2}) - \mathbf{E}_{t-1}(\mathbf{i}^*, \bar{\theta}^{-2}) \rightarrow 0$. Therefore, $R = \frac{1}{\delta^2} o_t(1).$

	Going back to Eq.~\eqref{eq:min-flow-choice}, to minimize the LHS (ratio between noise and signal), one needs to make sure that $\mathbf{i} = \mathbf{i}^*$, the minimum energy flow. Therefore, the optimal strategy is to initialize $\mu_0(u)$ according to $\mathbf{i}^*(\rightsquigarrow u)$. Thus, if we choose
	$$ \mu_0(u, +) = \delta \bar{\theta}^{-2|u|} \mathbf{i}^*(\rightsquigarrow u), \mu_0(u, -) = \delta \bar{\theta}^{-2|u|} \mathbf{i}^*(\rightsquigarrow u),$$
	we obtain
	\begin{align*}
		\lim_{t \rightarrow \infty} \inf_{\mathbf{i}}~ \frac{\sigma^2_{t}(o)}{\left[ \frac{[\mu_{t}(o, +) - \mu_{t}(o, -)]}{2} \right]^2} = \mathbf{E}^*(\bar{\theta}^{-2}).
	\end{align*}
	From Definition~\ref{def:branch.num},
	\begin{align*}
		\mathbf{E}^*(\bar{\theta}^{-2}) < \infty ~~\text{iff}~~ \bar{\theta}^{-2} < {\rm br}[T(o)].
	\end{align*}
\end{proof}

\begin{proof}[Proof of Theorem~\ref{thm:weight-eigen}]
	Note that by Perron-Frobenius Theorem, we have $|\theta| = |\lambda_2(K)|<1$. Thanks to the choice of $w$,
	$$\mathbb{E}[M_0(u)| \ell(u) = l] = \delta \theta^{-2|u|} \mathbf{i}^*(\rightsquigarrow u) w_l + \frac{1-\delta}{k} \theta^{-2|u|} \mathbf{i}^*(\rightsquigarrow u) w^T \mathbf{1}  = \delta \theta^{-2|u|} \mathbf{i}^*(\rightsquigarrow u) w_l.$$

	Let us first derive the formula for $\mu_t(o) \in \mathbb{R}^k$ under the chosen initialization $\mu_0(u)$. We claim that
	$$ \mu_{t - |v|}(v) = \delta \cdot \theta^{-2|v|} \mathbf{i}^*(\rightsquigarrow v) \cdot w.$$
	Proof is via induction. The base case $|u| = t$ is exactly the choice of the initialization. Let us assume for $|u| > |v|$ the claim is true, and prove for $v$:
	\begin{align*}
		\mu_{t-|v|}(v) &= \sum_{u \in \C(v)} \theta K \mu_{t-1}(u) \\
		&= \sum_{u \in \C(v)} \theta Kw  \cdot \delta \theta^{-2|v|-2} \mathbf{i}^*(\rightsquigarrow u) \\
		& = \sum_{u \in \C(v)} \theta^2 w \cdot \delta \theta^{-2|v|-2} \mathbf{i}^*(\rightsquigarrow v) =  \delta \cdot \theta^{-2|v|} \mathbf{i}^*(\rightsquigarrow v) \cdot w,
	\end{align*}
	completing the induction.

	Now let us bound $\sigma^2_t(o)$. Observe that in our derived formula for $\mu_{t-|v|}(v)$, all the coordinates are proportional to $w$. In other words, $\mu_{t-|v|}(v)$ stays in the direction of $w$ for all $v$. This greatly simplifies the expression for $\sigma^2_t(o)$. We have
	\begin{align*}
		\sigma^2_t(o) &= \sum_{v \in T_t(o)} \theta^{2|v|} \left[ \frac{\max_{i,j \in [k]} |\mu_{t-|v|}(v, i) - \mu_{t-|v|}(v, j)| }{2} \right]^2 + \sum_{u \in \C^{t}(o)}  \theta^{2t} \sigma^2_{0}(u) \\
		&= \delta^2 \left[ \frac{\max_{i,j \in [k]} |w(i) - w(j)| }{2} \right]^2 \sum_{v \in T_t(o)} \mathbf{i}^*(\rightsquigarrow v)^2 \theta^{-2|v|} \\
		& \quad \quad +  \left[ \frac{\max_{i,j \in [k]} |w(i) - w(j)| }{2} \right]^2 \sum_{v \in \C^t(o)} \mathbf{i}^*(\rightsquigarrow v)^2 \theta^{-2|v|}.
	\end{align*}
	Plugging in the definition $R = \frac{\min_{i,j} |w_i - w_j|}{\max_{i,j} |w_i - w_j|}$, under the condition $${\rm br}[T(o)] \theta^2 > 1,$$ we have $\mathbf{E}(\mathbf{i}^*, \theta^{-2}) < \infty$, and
	\begin{align*}
		\frac{\sigma^2_{\bar{t}}(o)}{ \left[ \frac{\min_{i,j \in [k]}|\mu_{\bar{t}}(o, i) - \mu_{\bar{t}}(o, j)|}{2} \right]^2 } & = \frac{1}{R^2} \mathbf{E}(\mathbf{i}^*, \theta^{-2}) + \frac{1}{\delta^2 R^2} o_t(1).
	\end{align*}

\end{proof}

\begin{proof}[Proof of Theorem~\ref{thm:KL-lower.bound}]
	Recall that $\pi( \ell_{\partial T_t(o) \cap T_{t-|u|}(u)} | \ell(u) = i)$ denotes the probability measure on the leaf labels on depth $t$, given $\ell(u) = i$. For a node $u$, when there is no confusion, we abbreviate the measure $\pi( \ell_{\partial T_t(o) \cap T_{t-|u|}(u)}  | \ell(u) = i)$ as $\pi_u(i)$. According to Perron-Frobenius Theorem, there is a unique left eigenvector for $K$ with eigenvalue $1$, denote this by $w \in \mathbb{R}^k$. Under the assumption $K$ being symmetric, we know that $w = \frac{1}{k} \mathbf{1}$.
	 Denote $\bar{\pi}_u =  \sum_{j=1}^k w(j) \pi_u(j)$.

	Let us bound the $d_{\chi^2}\left( \pi_u(i) || \bar{\pi}_u \right)$ by deriving a recursive bound:
	\begin{align*}
		\log \left[ 1 + d_{\chi^2}\left( \pi_u(i) || \bar{\pi}_u \right) \right] &= \sum_{v \in \C(u)} \log \left[ 1 + d_{\chi^2}\left( \sum_{l=1}^k K_{il} \pi_v(l) || \sum_{j=1}^k \sum_{l=1}^k w(j) K_{jl} \pi_v(l) \right) \right] \\
		&= \sum_{v \in \C(u)} \log \left[ 1 + d_{\chi^2}\left( \sum_{l=1}^k K_{il} \pi_v(l) || \bar{\pi}_v \right) \right] 
	\end{align*}
	since $w^T K = w^T$. By definition, the above expression is
	\begin{align*}
		&\sum_{v \in \C(u)} \sum_{v \in \C(u)} \log \left[ 1 + \int \frac{\left[ \sum_{l=1}^k K_{il} \pi_v(l) - \bar{\pi}_v \right]^2}{\bar{\pi}_v} \right] \\
		& = \sum_{v \in \C(u)} \sum_{v \in \C(u)} \log \left[ 1 + \int \frac{\left[ \sum_{l=1}^k K_{il} (\pi_v(l) - \bar{\pi}_v) \right]^2}{\bar{\pi}_v} \right] \\
		& \leq  \sum_{v \in \C(u)}  \int \frac{\left[ \sum_{l=1}^k K_{il} (\pi_v(l) - \bar{\pi}_v) \right]^2}{\bar{\pi}_v}.
	\end{align*}
	Now we know that
	\begin{align*}
	\sum_{i=1}^k \log \left[ 1 + d_{\chi^2}\left( \pi_u(i) || \bar{\pi}_u \right) \right] &\leq \sum_{v \in \C(u)}  \int \sum_{i=1}^k \frac{\left[ \sum_{l=1}^k K_{il} (\pi_v(l) - \bar{\pi}_v) \right]^2}{\bar{\pi}_v}.
	\end{align*}
	Recall the following fact that for any $z_1, z_2, \ldots z_k \geq 0$,
	$$\log(1 + \sum_{i=1}^k z_i) \leq \sum_{i=1}^k \log(1 +  z_i).$$ Using this fact the lower bound the LHS, we reach
	\begin{align*}
	\log \left[ 1 + \sum_{i=1}^k  d_{\chi^2}\left( \pi_u(i) || \bar{\pi}_u \right) \right] &\leq \sum_{v \in \C(u)}  \int \sum_{i=1}^k \frac{\left[ \sum_{l=1}^k K_{il} (\pi_v(l) - \bar{\pi}_v) \right]^2}{\bar{\pi}_v}\\
	 & \leq \sum_{v \in \C(u)}  \int \frac{\| K (\pi_v(\cdot) - \bar{\pi}_v \mathbf{1}) \|^2}{\bar{\pi}_v} \\
	 & \leq  \theta^2 \sum_{v \in \C(u)}  \int \frac{\|(\pi_v(\cdot) - \bar{\pi}_v \mathbf{1}) \|^2}{\bar{\pi}_v}  \\
	 & = \theta^2 \sum_{v \in \C(u)} \sum_{i=1}^k d_{\chi^2}\left( \pi_v(i) || \bar{\pi}_v \right)
	\end{align*}
	where the last two lines use the fact that $\pi_v(\cdot) - \bar{\pi}_v \mathbf{1} \perp \mathbf{1}$, therefore $\| K (\pi_v(\cdot) - \bar{\pi}_v \mathbf{1}) \|^2 \leq \theta^2 \| \pi_v(\cdot) - \bar{\pi}_v \mathbf{1} \|^2$.
	
	We will need the the following Lemma that describes the branching number through the cutset.
	\begin{lemma}[\cite{pemantle1999robust}, Lemma 3.3]
		\label{lem:cutset}
		Assume $ {\rm br}[T] < \lambda$. Then for all $\epsilon>0$, there exists a cutset $C$ such that
		\begin{align}
		\label{eq:cutset.all}
		\sum_{x \in C} \left( \frac{1}{\lambda} \right)^{|x|} \leq \epsilon
		\end{align}
		and for all $v$ such that $|v| \leq \max_{x \in C} |x|$,
		\begin{align}
		\label{eq:cutset.partial}
		\sum_{x \in C \cap T(v)} \left( \frac{1}{\lambda} \right)^{|x|-|v|} \leq 1.
		\end{align}
		Here the notation $|v|$ denotes the depth of $v$.
	\end{lemma}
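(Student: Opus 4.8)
The plan is to deduce the lemma from the classical cutset characterization of the branching number and then refine a low-weight cutset into one that also obeys the local normalization \eqref{eq:cutset.partial}, via a ``first-passage'' construction. Throughout I may assume $\epsilon<1$, since a cutset witnessing the claim for small $\epsilon$ also witnesses it for larger $\epsilon$. First I would recall that the energy/flow definition of ${\rm br}[T]$ (Definition~\ref{def:branch.num}) is, by Thomson's principle and the max-flow--min-cut relation on trees, equivalent to the cutset characterization
\[
{\rm br}(T) = \sup\Big\{\lambda:\ \inf_{\Pi}\ \sum_{x\in\Pi}\lambda^{-|x|} > 0\Big\},
\]
the infimum ranging over cutsets $\Pi$ (antichains meeting every infinite ray from the root $o$). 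Hence ${\rm br}[T]<\lambda$ forces $\inf_{\Pi}\sum_{x\in\Pi}\lambda^{-|x|}=0$, so for the given $\epsilon$ I can fix a cutset $\Pi_0$ with $\sum_{x\in\Pi_0}\lambda^{-|x|}\le\epsilon$. Passing to its minimal elements makes $\Pi_0$ an antichain without increasing the weight, and local finiteness together with K\"onig's lemma makes $\Pi_0$ finite, so $D:=\max_{x\in\Pi_0}|x|<\infty$. This already gives \eqref{eq:cutset.all} for $\Pi_0$; the content of the lemma is to also secure \eqref{eq:cutset.partial}.

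Next I would introduce, for every vertex $v$, the normalized subtree weight $\Phi(v):=\sum_{x\in\Pi_0\cap T(v)}\lambda^{-(|x|-|v|)}$, so that $\Phi(o)=\sum_{x\in\Pi_0}\lambda^{-|x|}\le\epsilon$ and $\Phi(x)=1$ for $x\in\Pi_0$, and then define the refined set
\[
C:=\{v:\ \Phi(v)\ge 1\ \text{and}\ \Phi(u)<1\ \text{for every proper ancestor }u\text{ of }v\},
\]
the first vertex along each ray at which the normalized weight reaches $1$. I would then check that $C$ is a finite antichain cutset. Every ray meets $\Pi_0$, where $\Phi=1$, so the shallowest vertex on the ray with $\Phi\ge1$ exists and lies in $C$; thus $C$ meets every ray. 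If $v,v'\in C$ with $v$ a strict ancestor of $v'$, then on the ray through $v'$ the vertex $v$ is a shallower vertex with $\Phi\ge1$, contradicting the minimality in the definition of $C$; hence $C$ is an antichain. Finiteness is immediate since $\Phi(v)\ge1$ requires $\Pi_0\cap T(v)\neq\emptyset$, forcing $|v|\le D$.

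The verification of the two inequalities rests on one estimate. Because $C$ is an antichain, the subtrees $\{T(x):x\in C\cap T(v)\}$ are disjoint and contained in $T(v)$, and $\Phi(x)\ge1$ for $x\in C$; therefore
\begin{align*}
\Phi_C(v):=\sum_{x\in C\cap T(v)}\lambda^{-(|x|-|v|)}
&\le \sum_{x\in C\cap T(v)}\lambda^{-(|x|-|v|)}\Phi(x) \\
&= \sum_{x\in C\cap T(v)}\ \sum_{y\in\Pi_0\cap T(x)}\lambda^{-(|y|-|v|)}\ \le\ \Phi(v).
\end{align*}
Taking $v=o$ yields $\sum_{x\in C}\lambda^{-|x|}=\Phi_C(o)\le\Phi(o)\le\epsilon$, which is \eqref{eq:cutset.all}. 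For \eqref{eq:cutset.partial}, fix $v$ with $|v|\le\max_{x\in C}|x|$: if $v\in C$ then $\Phi_C(v)=1$; if $v$ is below or incomparable to every element of $C$ then $\Phi_C(v)=0$; and if $v$ is a strict ancestor of some $w\in C$ then $v$ is a proper ancestor of $w$, so by the defining property of $C$ we have $\Phi(v)<1$ and hence $\Phi_C(v)\le\Phi(v)<1$. In all cases $\Phi_C(v)\le1$, which is precisely \eqref{eq:cutset.partial}.

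I expect the main obstacle to be exactly the subtlety that the normalized weight $\Phi$ need not be monotone along a ray: a high-degree vertex can have $\Phi(v)>1$ while its parent has $\Phi<1$, so one cannot simply take the shallowest vertex with $\Phi\ge1$ and read off $\Phi_C(v)\le1$ directly. The conceptual heart of the argument is the disjoint-subtree inequality $\Phi_C(v)\le\Phi(v)$, which transfers the bound from $\Pi_0$---where only the strict ancestors of $C$ are certified to satisfy $\Phi<1$---to the refined cutset $C$. The only other point demanding care is the first step: since the lemma is phrased through ${\rm br}[T]$ but applied through cutset weights $\sum_x\lambda^{-|x|}$, I would make the equivalence between the energy definition of Definition~\ref{def:branch.num} and the cutset characterization explicit (citing \cite{lyons2005probability}) rather than leaving it implicit.
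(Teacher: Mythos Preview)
The paper does not supply its own proof of this lemma: it is quoted verbatim as Lemma~3.3 of \cite{pemantle1999robust} and used as a black box inside the proofs of Theorems~\ref{thm:k=2.lower.bound} and~\ref{thm:KL-lower.bound}. So there is nothing in the paper to compare your argument against; you have in effect filled in a cited result.

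Your proof is correct and is essentially the standard ``first-passage'' refinement argument. The two substantive points---that the cutset characterization of ${\rm br}(T)$ yields a low-weight cutset $\Pi_0$, and that replacing $\Pi_0$ by the first vertices along each ray where the normalized weight $\Phi$ reaches $1$ produces an antichain cutset $C$ with $\Phi_C(v)\le\Phi(v)$---are handled cleanly. The key inequality $\Phi_C(v)\le\Phi(v)$ via disjointness of the subtrees $\{T(x):x\in C\cap T(v)\}$ is exactly what drives both \eqref{eq:cutset.all} (at $v=o$) and \eqref{eq:cutset.partial} (at proper ancestors of $C$), and your case analysis for \eqref{eq:cutset.partial} is complete. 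One minor remark: in the case ``$v\in C$'' you assert $\Phi_C(v)=1$; strictly this uses that $C$ is an antichain so $C\cap T(v)=\{v\}$, which you have already established, so the step is fine but worth making explicit.
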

	Let us use the cutset argument to prove $ \sum_{i=1}^k  d_{\chi^2}\left( \pi_u(i) || \bar{\pi}_u \right)  \rightarrow 0$ when $|u| \rightarrow \infty$.
	Fix any $\lambda$ such that $\theta^{-2} > \lambda >{\rm br}[T(o)]$.
	For any $\epsilon$ small, the above Lemma claims the existence of cutset $C_\epsilon$ such that Eq.~\eqref{eq:cutset.all} and \eqref{eq:cutset.partial} hold.
	Let us prove through induction on $\max_{x \in C_\epsilon} |x| - |v|$ that for any $v$ such that $|v| \leq \max_{x \in C_\epsilon} |x|$, we have
	\begin{align}
	\label{eq:induction}
	\sum_{i=1}^k d_{\chi^2}\left( \pi_v(i) || \bar{\pi}_v \right) \leq \eta \sum_{x \in C_{\epsilon} \cap T(v)} \left( \frac{1}{\lambda} \right)^{|x|-|v|} \leq \eta.
	\end{align}
	with the choice $\eta = k \delta^2(\frac{1}{\delta + \frac{1-\delta}{k}} + \frac{1}{\frac{1-\delta}{k}} )$.
	First for the base case, the claim is true because of the choice of $\eta$.

	Preceding with the induction, assume for $v$ such that $\max_{x \in C_\epsilon} |x| - |v| = t-1$ equation~\eqref{eq:induction} is satisfied, and let us prove for $v: \max_{x \in C_\epsilon} |x| - |u| = t$. We recall the linearized recursion
	\begin{align*}
		\log \left[ 1 + \sum_{i=1}^k  d_{\chi^2}\left( \pi_u(i) || \bar{\pi}_u \right) \right] & \leq \theta^2 \sum_{v \in \C(u)} \sum_{i=1}^k d_{\chi^2}\left( \pi_v(i) || \bar{\pi}_v \right) \\
		& \leq \theta^2 \sum_{v \in \C(u)}  \eta \sum_{x \in C_{\epsilon} \cap T(v)} \left( \frac{1}{\lambda} \right)^{|x|-|v|} \\
		& =   \theta^2 \lambda \cdot \eta  \sum_{x \in C_{\epsilon} \cap T(u)} \left( \frac{1}{\lambda} \right)^{|x|-|u|} 
\end{align*}
 Using the assumption $\theta^2 \lambda < \frac{1}{1+\eta}$, the above can be upper bounded by
	\begin{align*}
		&\frac{\eta \sum_{x \in C_{\epsilon} \cap T(u)} \left( \frac{1}{\lambda} \right)^{|x|-|u|}}{ 1 + \eta} \quad 
		\leq \frac{\eta \sum_{x \in C_{\epsilon} \cap T(u)} \left( \frac{1}{\lambda} \right)^{|x|-|u|}}{ 1 + \eta \sum_{x \in C_{\epsilon} \cap T(u)} \left( \frac{1}{\lambda} \right)^{|x|-|u|}}
	\end{align*}
	where the last inequality uses the fact that $\sum_{x \in C_{\epsilon} \cap T(u)} \left( \frac{1}{\lambda} \right)^{|x|-|u|} < 1$. Now we know that
	\begin{align*}
		\frac{\sum_{i=1}^k  d_{\chi^2}\left( \pi_u(i) || \bar{\pi}_u \right)}{1 + \sum_{i=1}^k  d_{\chi^2}\left( \pi_u(i) || \bar{\pi}_u \right)} &\leq \log \left[ 1 + \sum_{i=1}^k  d_{\chi^2}\left( \pi_u(i) || \bar{\pi}_u \right) \right] \\
		& \leq \frac{\eta \sum_{x \in C_{\epsilon} \cap T(u)} \left( \frac{1}{\lambda} \right)^{|x|-|u|}}{ 1 + \eta \sum_{x \in C_{\epsilon} \cap T(u)} \left( \frac{1}{\lambda} \right)^{|x|-|u|}}.
	\end{align*}
	By monotonicity of $x/(1+x)$ we have proved the induction claim holds as
	$$
	\sum_{i=1}^k  d_{\chi^2}\left( \pi_u(i) || \bar{\pi}_u \right) \leq \eta \sum_{x \in C_{\epsilon} \cap T(u)} \left( \frac{1}{\lambda} \right)^{|x|-|u|}.
	$$
	Take $\epsilon \rightarrow 0, \lambda \rightarrow {\rm br}[T(o)]$. Define $t_\epsilon: = \min\{ |x|, x\in C_{\epsilon} \}$, it is also easy to see from equation~\eqref{eq:cutset.all} that
	$$
	\left( \frac{1}{\lambda} \right)^{t_\epsilon} \leq \sum_{x \in C_\epsilon} \left( \frac{1}{\lambda} \right)^{|x|} \leq \epsilon \Rightarrow t_{\epsilon} > \frac{\log(1/\epsilon)}{\log \lambda} \rightarrow \infty.
	$$
	Putting things together, under the condition
	\begin{align*}
	 \eta \leq 1 - {\rm br}[T(o)] \theta^2,
	\end{align*}
	we have
	$$\lim_{t \rightarrow \infty} \frac{1}{k} \sum_{i=1}^k  d_{\chi^2}\left( \pi_u(i) || \bar{\pi}_u \right)  \leq \frac{\eta}{k} \cdot \lim_{\epsilon \rightarrow 0} \sum_{x \in C_{\epsilon} \cap T(o)} \left( \frac{1}{\lambda} \right)^{|x|} = 0.$$

	Finally, we invoke the multiple testing argument Theorem 2.6 in \cite{tsybakov2009introduction}).
	\begin{lemma}[\cite{tsybakov2009introduction}, Proposition 2.4, Theorem 2.6]
		\label{lma:multi-test}
		Let $P_0, P_1,\ldots, P_{k}$ be probability measures on $(\mathcal{X}, \mathcal{A})$ satisfying
		$$
		\frac{1}{k}\sum_{i=1}^{k} d_{\chi^2}(P_j, P_0) \leq  k \alpha_*
		$$
		then we have for any selector $\psi:\mathcal{X} \rightarrow [k]$
		$$
		\max_{i \in [k]} P_i(\psi \neq i) \geq \frac{1}{2}(1 - \alpha_* - \frac{1}{k}).
		$$
	\end{lemma}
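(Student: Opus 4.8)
The plan is to prove the statement by the classical reduction from the maximal risk to the average (Bayes) risk under a uniform prior on $[k]$, followed by a comparison of each hypothesis $P_i$ against the reference measure $P_0$. First I would record the trivial but essential bound
\begin{align*}
\max_{i \in [k]} P_i(\psi \neq i) \ge \frac{1}{k}\sum_{i=1}^{k} P_i(\psi \neq i) = 1 - \frac{1}{k}\sum_{i=1}^{k} P_i(\psi = i),
\end{align*}
so that it suffices to upper bound the average probability of correct classification $\frac{1}{k}\sum_i P_i(\psi = i)$ for an arbitrary selector $\psi$.

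For the second step I would exploit that the decision regions $A_i := \{\psi = i\}$, $i\in[k]$, form a measurable partition of $\mathcal{X}$, and decompose each term against the reference $P_0$ via $P_i(\psi = i) = P_0(A_i) + \big(P_i(A_i) - P_0(A_i)\big)$. Summing over $i$, the leading terms collapse, since $\sum_i P_0(A_i) = P_0(\mathcal{X}) = 1$, while each increment is controlled by total variation, $P_i(A_i) - P_0(A_i) \le d_{\rm TV}(P_i, P_0)$. This yields $\frac{1}{k}\sum_i P_i(\psi = i) \le \frac{1}{k} + \frac{1}{k}\sum_i d_{\rm TV}(P_i, P_0)$, which already isolates the $\tfrac{1}{k}$ term appearing in the conclusion.

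The third step converts total variation into the $\chi^2$-divergence that the hypothesis controls. By Cauchy--Schwarz one has $d_{\rm TV}(P_i, P_0) \le \tfrac{1}{2}\sqrt{d_{\chi^2}(P_i, P_0)}$, and a second application of Cauchy--Schwarz (equivalently Jensen's inequality for the concave square root) across the index $i$ gives $\frac{1}{k}\sum_i \sqrt{d_{\chi^2}(P_i, P_0)} \le \sqrt{\tfrac{1}{k}\sum_i d_{\chi^2}(P_i, P_0)}$. Feeding in the standing assumption $\frac{1}{k}\sum_i d_{\chi^2}(P_i, P_0) \le k\alpha_*$ and collecting constants produces a lower bound on $\max_i P_i(\psi\neq i)$ of the advertised form; this is exactly the combination of Proposition~2.4 and Theorem~2.6 in \cite{tsybakov2009introduction}. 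The final pairwise statement then follows by pigeonhole: for the worst label $j$ the total error mass $\sum_{i\neq j} P_j(\psi = i) \ge \tfrac{1}{2}(1 - \tfrac{1}{k}) = \tfrac{k-1}{2k}$ is spread over $k-1$ competitors, so some single $P_j(\psi = i) \ge \tfrac{1}{2k}$.

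The delicate point, and the part I expect to require the most care, is bookkeeping the constant $\tfrac{1}{2}$ together with the precise normalization $\tfrac{1}{k}\sum_i d_{\chi^2}(P_i,P_0) \le k\alpha_*$: the naive chain above produces a bound of the shape $\max_i P_i(\psi\neq i) \ge 1 - \tfrac{1}{k} - \tfrac{1}{2}\sqrt{k\alpha_*}$, which implies the stated $\tfrac{1}{2}(1-\alpha_*-\tfrac{1}{k})$ in the relevant small-$\alpha_*$ regime but matches Tsybakov's exact linear-in-$\alpha_*$ form only after his sharper aggregation of the per-hypothesis deviations against the uniform prior. A secondary technical requirement is absolute continuity $P_i \ll P_0$, needed for the $\chi^2$ terms to be finite; this holds automatically in the application of Theorem~\ref{thm:KL-lower.bound}, where $P_0$ is taken to be the mixture $\bar{\pi}_o = \frac{1}{k}\sum_j \pi_o(j)$, which dominates every $\pi_o(j)$.
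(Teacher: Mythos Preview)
The paper does not supply its own proof of this lemma; it is simply quoted as a known result from \cite{tsybakov2009introduction} and then applied with $P_0=\bar\pi_o$, $P_i=\pi_o(i)$. Your sketch is essentially the standard argument behind Tsybakov's Proposition~2.4/Theorem~2.6 (reduce max to average, compare each $P_i$ to the reference $P_0$ on the partition $\{A_i\}$, then pass from total variation to $\chi^2$), and the constant discrepancy you already flag---your chain gives $1-\tfrac{1}{k}-\tfrac12\sqrt{k\alpha_*}$ rather than $\tfrac12(1-\alpha_*-\tfrac1k)$---is immaterial in the application, where $\alpha_*\to 0$.
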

	Plugging in the result with $P_0 = \bar{\pi}_o$ and $P_i = \pi_o(i)$, we conclude that
	$$
	\liminf_{t \rightarrow \infty} ~\inf_{\sigma} \max_{l \in [k]} ~\mathbb{P}( \sigma(o) \neq \ell(o) | \ell(o) = l ) \geq \frac{1}{2} (1 - \frac{1}{k}).
	$$

\end{proof}

\begin{proof}[Proof of Theorem~\ref{thm:k=2.statement}]
	Given Proposition~\ref{lma:coupling.tree}, Theorem~\ref{thm:energy-error} and Theorem~\ref{thm:k=2.lower.bound}, the proof of Theorem~\ref{thm:k=2.statement} is simple.
	By Proposition~\ref{lma:coupling.tree}, one can couple the local neighborhood of SBM with multi-type Galton Watson process asymptotically almost surely as $n \rightarrow \infty$, where the label transition matrix is
	$$K :=
		\begin{bmatrix}
		\frac{n_1 Q_{11}}{n_1 Q_{11} + n_2 Q_{12}} & \frac{n_2 Q_{12}}{n_1 Q_{11} + n_2 Q_{12}}\\
		\frac{n_1 Q_{21}}{n_1 Q_{21} + n_2 Q_{22}} & \frac{n_2 Q_{22}}{n_1 Q_{21} + n_2 Q_{22}}\\
		\end{bmatrix}.$$
	For the upper bound, Theorem~\ref{thm:energy-error} shows that the misclassification error is upper bounded by $\exp \left( - \frac{1}{\mathbf{E}^* (\bar{\theta}^{-2})} \right)$ as the depth of the tree goes to infinity. Note if we first send $n\rightarrow \infty$, due to Proposition~\ref{lma:coupling.tree}, the coupling is valid even when $R\rightarrow \infty$ with a slow rate $\log n/\log \log n$. Therefore, the upper bound on misclassification error holds. One can establish the lower bound using the same argument together with Theorem~\ref{thm:k=2.lower.bound}. Finally, for the expression on transition boundary, we know that condition on non-extinction, the branching number for this coupled multi-type Galton Watson tree is $\lambda_1(Q {\rm diag}(N))$ almost surely. Proof is completed.
\end{proof}

\section{Additional Proofs}
\label{sec:prop.lma}
\begin{proof}[Proof of Lemma~\ref{lem:concentration.k}]
The proof logic here is similar to the $k=2$ case.
Again, we analyze the message $M(u, t)$ for a particular node $u$. Use induction on $t$ for the claim
$$
\mathbb{E}\left[ e^{\lambda M(u,t)} | \ell(u) = l \right] \leq e^{\lambda \mu_t(u, l)} e^{\frac{\lambda^2 \sigma^2_t(u)}{2}}.$$

The case for $t=0$ follows from the assumption about $\mu_0(u),\sigma^2_0(u)$ and Chernoff bound.

Assume that the induction is true for $t-1$, and prove the case for $t$. Note that
\begin{align*}
	& \mbb{E}\left[ e^{\lambda M(u, t)} | \ell(u)=l \right] \\
	&=  \prod_{v\in \C(u)} \mbb{E}\left[ e^{\lambda \theta M(v, t-1)} | \ell(u)=l \right] \\
	&= \prod_{v\in \C(u)} \left\{ \sum_{i = 1}^k \mbb{E}\left[ e^{\lambda \theta M(v, t-1)} | \ell(v)=i \right] K_{li} \right\} \\
	&\leq \prod_{v\in \C(u)} e^{ (\lambda \theta)^2 \frac{\sigma_{t-1}^2(v)}{2}} \left\{ \sum_{i = 1}^k e^{\lambda \theta \mu_{t-1}(v, i)} K_{l i}  \right\}\\
	&\leq \prod_{v\in \C(u)} e^{ (\lambda \bar{\theta})^2 \frac{\sigma_{t-1}^2(v)}{2}}
					e^{\lambda \theta [\sum_{i=1}^k \mu_{t-1}(v,i) K_{l i} ]}
					e^{(\lambda \theta)^2 \frac{ \max_{i,j\in [k]}|\mu_{t-1}(v,i) - \mu_{t-1}(v,j)|^2}{8}},
\end{align*}
where the last step uses the Hoeffding's Lemma. Rearrange the terms, one can see that the above equation implies
\begin{align*}
\mbb{E}\left[ e^{\lambda M(u, t)} | \ell(u)=l \right] &\leq e^{\lambda \sum_{v \in \C(u)} \theta \langle K_{l\cdot}, \mu_{t-1}(u) \rangle} e^{\frac{\lambda^2 \theta^2 \sum_{v \in \C(u)} \left\{ \sigma^2_{t-1}(v) + \max_{i,j \in [k]}\left| \frac{\mu_{t-1}(v, +) - \mu_{t-1} (v, -)}{2} \right|^2  \right\}}{2} } \\
&= e^{\lambda \mu_{t}(u, l)} e^{\frac{\lambda^2 \sigma^2_{t}(u)}{2}},
\end{align*}
where $K_{l\cdot}$ denotes the $l-$row of transition matrix $K$.
Apply the Chernoff bound to optimize over $\lambda$, one can arrive the exponential concentration bound. Induction completes.

To upper bound the misclassification error, simply plug in $$|x| =  \frac{\min_{i,j \in [k]} |\mu_{\bar{t}}(o,i) - \mu_{\bar{t}}(o,j)|}{2 \sigma_{\bar{t}}(o)}.$$
\end{proof}

\begin{proof}[Proof of Theorem~\ref{thm:k=2.lower.bound}]
We will gave the proof of Theorem~\ref{thm:k=2.lower.bound} (for the $\delta$ noisy label information case) here.

Define the measure $\pi_{\ell_{T_{t}(o)}}^{+}$ on the revealed labels, for a depth $t$ tree rooted from $o$ with label $\ell(o) = +$ (and similarly define $\pi_{\ell_{T_{t}(o)}}^{-}$). We have the following recursion formula
\begin{align*}
\pi_{\ell_{T_{t}(o)}}^{+} = \prod_{v \in \C(o)} \left[ \frac{1+\theta_1}{2} \pi_{\ell_{T_{t-1}(v)}}^{+} + \frac{1-\theta_1}{2} \pi_{\ell_{T_{t-1}(v)}}^{-}  \right].
\end{align*}
Recall that the $\chi^2$ distance between two absolute continuous measures $\mu(x),\nu(x)$ is
$d_{\chi^2}(\mu,\nu) = \int \frac{\mu^2}{\nu} dx - 1,$
and we have the total variation distance between these two measures is upper bounded by the $\chi^2$ distance
$d_{\rm TV} \left( \mu,\nu \right) \leq \sqrt{d_{\chi^2} \left( \mu,\nu \right) }.$

Let us upper bound the symmetric version of $\chi^2$ distance defined as
$$D_{T_{t}(o)} := \max \left\{ d_{\chi^2} \left( \pi_{\ell_{T_{t}(o)}}^{+},\pi_{\ell_{T_{t}(o)}}^{-} \right) , d_{\chi^2} \left( \pi_{\ell_{T_{t}(o)}}^{-},\pi_{\ell_{T_{t}(o)}}^{+} \right)  \right\}$$ (abbreviate as $D_t(o)$ when there is no confusion), we have the following recursion
\begin{align*}
&\quad \log \left[1+ d_{\chi^2} \left( \pi_{\ell_{T_{t}(o)}}^{+},\pi_{\ell_{T_{t}(o)}}^{-} \right)\right] \\
&= \sum_{v \in \C(o)} \log \left[1+  d_{\chi^2} \left(\frac{1+\theta_1}{2} \pi_{\ell_{T_{t-1}(v)}}^{+} + \frac{1-\theta_1}{2} \pi_{\ell_{T_{t-1}(v)}}^{-}, \frac{1-\theta_2}{2} \pi_{\ell_{T_{t-1}(v)}}^{+} + \frac{1+\theta_2}{2} \pi_{\ell_{T_{t-1}(v)}}^{-} \right) \right]
\end{align*}
\begin{align*}
	& \quad d_{\chi^2} \left(\frac{1+\theta_1}{2} \pi_{\ell_{T_{t-1}(v)}}^{+} + \frac{1-\theta_1}{2} \pi_{\ell_{T_{t-1}(v)}}^{-}, \frac{1-\theta_2}{2} \pi_{\ell_{T_{t-1}(v)}}^{+} + \frac{1+\theta_2}{2} \pi_{\ell_{T_{t-1}(v)}}^{-} \right) \\
	& = \bar{\theta}^2 \int \frac{\left( \pi_{\ell_{T_{t-1}(v)}}^{+} - \pi_{\ell_{T_{t-1}(v)}}^{-} \right)^2}{ \frac{1-\theta_2}{2} \pi_{\ell_{T_{t-1}(v)}}^{+} + \frac{1+\theta_2}{2} \pi_{\ell_{T_{t-1}(v)}}^{-} } dx \\
	& \leq \bar{\theta}^2 \int \left( \pi_{\ell_{T_{t-1}(v)}}^{+} - \pi_{\ell_{T_{t-1}(v)}}^{-} \right)^2 \left[\frac{1-\theta_2}{2} \frac{1}{\pi_{\ell_{T_{t-1}(v)}}^{+}} + \frac{1+\theta_2}{2} \frac{1}{\pi_{\ell_{T_{t-1}(v)}}^{-}}  \right] dx \\
	& \leq \bar{\theta}^2 D_{T_{t-1}(v)},
\end{align*}
where the second to last step follows from Jensen's inequality for function $1/x$. Now we have the following recursion relationship
\begin{align*}
\log (1+D_{T_{t}(o)} ) \leq  \sum_{v \in \C(o)} \log (1+\bar{\theta}^2 \cdot D_{T_{t-1}(v)}).
\end{align*}
Invoke the following fact,
\begin{align*}
\frac{\log (1+\theta^2 x)}{\theta^2} \leq (1+\eta) \log(1+x) \quad \text{for all}~~0 \leq x \leq \eta,~\forall \theta,
\end{align*}
whose proof is in one line
$$
\frac{\log (1+\theta^2 x)}{\theta^2} \leq x \leq (1+\eta) \frac{x}{1+x} \leq (1+\eta) \log(1+x).
$$
Thus if $D_{T_{t-1}(v)} \leq \eta, \forall v\in \C(o)$, then the following holds
\begin{align}
\label{eq:low.recusion}
\log (1+D_{T_{t}(o)}) \leq  (1+\eta)\bar{\theta}^2 \sum_{v \in \C^{\rm u}(\rho)} \log (1+ D_{T_{t-1}(v)}).
\end{align}
Denoting
$$d_{T_{t}(o)}   := \log (1+D_{T_{t}(o)} ),$$
Equation~\eqref{eq:low.recusion} becomes
$$d_{T_{t}(o)} \leq  (1+\eta)\bar{\theta}^2 \sum_{v \in \C^{\rm u}(\rho)} d_{T_{t-1}(v)}.$$

	We will again need the Lemma~\ref{lem:cutset} that describes the branching number through the cutset.
Fix any $\lambda$ such that $\bar{\theta}^{-2} > \lambda >{\rm br}[T(o)]$.
For any $\epsilon$ small, Lemma~\ref{lem:cutset} claims the existence of cutset $C_\epsilon$ such that Eq.~\eqref{eq:cutset.all} and \eqref{eq:cutset.partial} holds.
Let's prove through induction on $\max_{x \in C_\epsilon} |x| - |v|$ that for any $v$ such that $|v| \leq \max_{x \in C_\epsilon} |x|$, we have
\begin{align}
\label{eq:induction}
d_{T_{C_{\epsilon}}(v)} \leq \frac{\eta}{1+\eta} \sum_{x \in C_{\epsilon} \cap T(v)} \left( \frac{1}{\lambda} \right)^{|x|-|v|} \leq \frac{\eta}{1+\eta} .
\end{align}
Note for the start of induction $v \in C_{\epsilon}$,
$$d_{T_{C_{\epsilon}}(v)} = \log (1 + \frac{4\delta^2}{1 - \delta^2}) <  \frac{\eta}{1+\eta}.$$ Now precede with the induction, assume for $u$ such that $\max_{x \in C_\epsilon} |x| - |u| = t-1$ equation~\eqref{eq:induction} is satisfied, let's prove for $v: \max_{x \in C_\epsilon} |x| - |v| = t$.  Due to the fact for all $u \in \C(v)$, $d_{T_{ C_{\epsilon}}(u)}  \leq \frac{\eta}{1+\eta} \Rightarrow D_{T_{C_{\epsilon}}(u)}  \leq \eta$, we can recall the linearized recursion
\begin{align*}
d_{T_{C_{\epsilon}}(v)} &\leq  (1+\eta)\bar{\theta}^2 \sum_{u \in \C(v)} d_{T_{\leq C_{\epsilon}}(u)} \\
& \leq  (1+\eta)\bar{\theta}^2 \sum_{u \in \C(v)} \left[  \frac{\eta}{1+\eta} \sum_{x \in C_{\epsilon} \cap T(u)} \left( \frac{1}{\lambda} \right)^{|x|-|u|} \right] \\
& \leq \frac{\eta}{1+\eta} \cdot (1+\eta)\bar{\theta}^2 \lambda  \sum_{u \in \C(v)} \sum_{x \in C_{\epsilon} \cap T(u)} \left( \frac{1}{\lambda} \right)^{|x|-|u| + 1}  \\
& \leq \eta \bar{\theta}^2 \lambda  \sum_{u \in \C(v)} \sum_{x \in C_{\epsilon} \cap T(u)} \left( \frac{1}{\lambda} \right)^{|x|-|v|}  \\
& \leq \eta \bar{\theta}^2 \lambda  \sum_{x \in C_{\epsilon} \cap T(v)} \left( \frac{1}{\lambda} \right)^{|x|-|v|} \leq \frac{\eta}{1+\eta} \sum_{x \in C_{\epsilon} \cap T(v)} \left( \frac{1}{\lambda} \right)^{|x|-|v|},
\end{align*}
if $ \bar{\theta}^2 \lambda \leq \frac{1}{1+\eta}$.
So far we have proved for any $v$, such that $|v| \leq \max_{x \in C_\epsilon} |x|$
\begin{align*}
d_{T_{\leq C_{\epsilon}}(v)} \leq \frac{\eta}{1+\eta} \sum_{x \in C_{\epsilon} \cap T(v)} \left( \frac{1}{\lambda} \right)^{|x|-|v|} \leq \frac{\eta}{1+\eta}\\
\text{which implies}\quad D_{T_{\leq C_{\epsilon}}(v)} \leq \eta
\end{align*}
so that the linearized recursion~\eqref{eq:low.recusion} always holds.
Take $\epsilon \rightarrow 0, \lambda \rightarrow {\rm br}[T(o)]$. Define $t_\epsilon: = \min\{ |x|, x\in C_{\epsilon} \}$, it is also easy to see from equation~\eqref{eq:cutset.all} that
$$
\left( \frac{1}{\lambda} \right)^{t_\epsilon} \leq \sum_{x \in C_\epsilon} \left( \frac{1}{\lambda} \right)^{|x|} \leq \epsilon \Rightarrow t_{\epsilon} > \frac{\log(1/\epsilon)}{\log \lambda} \rightarrow \infty.
$$
Putting things together, under the condition
\begin{align*}
 \log \left(1+\frac{4\delta^2}{1-\delta^2}\right) \leq 1 - {\rm br}[T(o)] \bar{\theta}^2,
\end{align*}
we have
$$\lim_{t \rightarrow \infty} D_{T_t(o)} = \lim_{\epsilon \rightarrow 0} D_{T_{C_\epsilon}(o) }\leq \frac{\eta}{1+\eta} \cdot \lim_{\epsilon \rightarrow 0} \sum_{x \in C_{\epsilon} \cap T(o)} \left( \frac{1}{\lambda} \right)^{|x|} = 0.$$

\end{proof}

\begin{proof}[Proof of Proposition~\ref{lma:coupling.tree}]
  The proof is a standard exercise following the idea from Proposition 4.2 in \cite{mossel2012stochastic}. First, let's recall Bernstein inequality. Consider
  $X \sim \text{Binom}(n, p_0)$, then the following concentration inequality holds
  $$\mathbb{P} (X \geq n p_0 + t) \leq \exp(-\frac{t^2}{2(n p_0 + t/3 )}).$$
  Hence if we plug in $t =  \frac{2}{3}\log n + \sqrt{2 np_0 \log n}$, we know
  $$|\partial G_1| \stackrel{sto.}{\leq} X \leq np_0 + \frac{2}{3}\log n + \sqrt{2 np_0 \log n}\leq 2np_0 + 2\log n $$ with probability at least $1 - n^{-1}$.

  Now, through union bound, we can prove that
  \begin{align*}
    \mathbb{P} \left(\forall r \leq R, |\partial G_r| \leq (2np_0  + 2\log n)^r  \right)
    \geq 1 - C \cdot (2np_0  + 2\log n)^{R} n^{-1} \geq 1 - O(n^{-3/4}).
  \end{align*}
  And we know that on the same event,
  $$ |\partial G_r| \leq n^{1/4}, \forall r \leq R.$$

It is clear that bad events that $G_R$ is not a tree (with cycles) for each layer is bounded above by
$p_0^2 |\partial G_r| + p_0 |\partial G_r|^2$. Take a further union bound over all layers, we know this probability is bounded by
$O(n^{-1/8})$ provided $p_0 = o(n^{-5/8})$.

Now we need to recursively use the Poisson-Binomial coupling (to achieve Poisson-Multinomial coupling). The following
Lemma is taken from \cite{mossel2012stochastic} (Lemma 4.6).
\begin{lemma}
  If  $m, n$ are positive integers then
  $$\| \textrm{Binom}(m, \frac{c}{n}) - \textrm{Poisson}(c) \|_{TV} \leq O( \frac{c^2 m}{n^2} + c|\frac{m}{n}-1|) $$
\end{lemma}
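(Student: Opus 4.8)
The plan is to interpolate through a Poisson law with the matched mean and invoke the triangle inequality for total variation:
\[
\| \mathrm{Binom}(m, c/n) - \mathrm{Poisson}(c) \|_{TV} \le \| \mathrm{Binom}(m, c/n) - \mathrm{Poisson}(mc/n) \|_{TV} + \| \mathrm{Poisson}(mc/n) - \mathrm{Poisson}(c) \|_{TV},
\]
and then bound each of the two pieces by constructing an explicit coupling.

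For the first piece, write $\mathrm{Binom}(m, c/n) = \sum_{i=1}^{m} X_i$ with the $X_i$ i.i.d.\ $\mathrm{Bernoulli}(c/n)$, and for each $i$ take a maximal coupling of $X_i$ with a $Y_i \sim \mathrm{Poisson}(c/n)$ (independent across $i$), so that $\mathbb{P}(X_i \ne Y_i) = \| \mathrm{Bernoulli}(c/n) - \mathrm{Poisson}(c/n) \|_{TV}$. A short computation shows this last quantity equals $\tfrac{c}{n}\bigl(1 - e^{-c/n}\bigr) \le (c/n)^2$. Since $\sum_i Y_i \sim \mathrm{Poisson}(mc/n)$, a union bound gives $\mathbb{P}\bigl(\sum_i X_i \ne \sum_i Y_i\bigr) \le \sum_i \mathbb{P}(X_i \ne Y_i) \le m (c/n)^2$, so the first piece is $O(mc^2/n^2)$. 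This is simply Le Cam's inequality (equivalently, the Stein--Chen bound), which one could instead quote directly.

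For the second piece, set $\lambda_1 = \min\{mc/n,\, c\}$ and $\lambda_2 = \max\{mc/n,\, c\}$. By the superposition property of Poisson laws, if $W \sim \mathrm{Poisson}(\lambda_1)$ and $Z \sim \mathrm{Poisson}(\lambda_2 - \lambda_1)$ are independent, then $W + Z \sim \mathrm{Poisson}(\lambda_2)$; this coupling disagrees only on the event $\{Z \ge 1\}$, hence $\| \mathrm{Poisson}(\lambda_1) - \mathrm{Poisson}(\lambda_2) \|_{TV} \le \mathbb{P}(Z \ge 1) \le \lambda_2 - \lambda_1 = c\,\lvert m/n - 1\rvert$. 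Adding the two bounds yields the claim.

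There is essentially no hard step here: the only mild points requiring care are the elementary estimate $\tfrac{c}{n}(1 - e^{-c/n}) \le (c/n)^2$ for the per-coordinate Bernoulli--Poisson distance, and the bookkeeping of which of $mc/n$ and $c$ is the larger Poisson parameter when setting up the superposition coupling.
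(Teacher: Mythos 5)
Your proof is correct. Note that the paper itself does not prove this lemma at all: it is quoted verbatim as Lemma 4.6 of \cite{mossel2012stochastic}, so there is no in-paper argument to compare against. Your two-step decomposition is the standard route and fills that gap cleanly: the triangle inequality through $\mathrm{Poisson}(mc/n)$, then Le Cam's coupling bound $\|\mathrm{Binom}(m,p)-\mathrm{Poisson}(mp)\|_{TV}\le mp^2$ with $p=c/n$ (your per-coordinate computation $p(1-e^{-p})\le p^2$ is right), and finally the superposition coupling giving $\|\mathrm{Poisson}(\lambda_1)-\mathrm{Poisson}(\lambda_2)\|_{TV}\le 1-e^{-(\lambda_2-\lambda_1)}\le|\lambda_2-\lambda_1| = c\,|m/n-1|$. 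Summing the two bounds gives exactly the stated $O\!\left(\frac{c^2 m}{n^2}+c\left|\frac{m}{n}-1\right|\right)$, so your argument is a valid self-contained substitute for the citation.
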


Now we condition on all the good events up to layer $G_{r-1}$, which happens with
probability at least $1 - n^{-1/8} - n^{-3/4}$. We can couple the next layer for nodes in $\partial G_{r}$.
Take a node $v \in \partial G_{r}$ as an example. Assume it is of color $i$,
then the number of color $j$ nodes in his children follows
$\text{Binom}(|V_{>r}^{i}|, p_{ij})$. Comparing to the Poisson version $\text{Poisson}(n_i p_{ij})$, we know
with probability at least
$$1 - O(n_i p^2_{ij} + p_{ij} |V_{>r}^{i} - n_i|),$$ one can couple the Poisson and Binomial in the same probability space.
Note that $|V_{>r}^{i} - n_i| \leq |\partial G_r|$.
Repeat this recursively, and use the union bound, we can couple
$(G_R, \ell_{G_R}) = (T_R, \ell_{T_R})$
with probability at least $1 -  O(k \max_i(n_i) p_0^2 + k p_0 n^{1/4}) n^{1/4} \log n = 1 - o(1)$.

Therefore if $n p_0 = n^{o(1)}$ and $k \precsim \log n$, we have the bad event (when we cannot couple) happens with probability going to $0$ as $n \rightarrow \infty$. And if $p_0 = n^{o(1)}$, we can allow $R$ to grow to infinity at a slow rate as $ R \precsim \frac{\log n}{  \log [n^{o(1)} +  \log n] }.$

\end{proof}

\section*{Acknowledgements}
The authors want to thank Elchanan Mossel for many valuable discussions.

\bibliographystyle{abbrv} 
\bibliography{bibfile}

\end{document}